\newtheorem{thm}{Theorem}[section]
\newtheorem{cor}[thm]{Corollary}
\newtheorem{lem}[thm]{Lemma}
\newtheorem{prop}[thm]{Proposition}
\theoremstyle{definition}
\newtheorem{defn}[thm]{Definition}
\newtheorem{exe}[thm]{Example}
\numberwithin{equation}{section}
\newcommand{\Hom}{\textnormal{Hom}}
\newcommand{\GL}{\textnormal{GL}}
\newcommand{\topdim}{\textnormal{tdim}}
\newcommand{\inddim}{\textnormal{indim}}
\newcommand{\Q}{\mathbf{Q}}
\newcommand{\Z}{\mathbf{Z}}
\newcommand{\R}{\mathbf{R}}
\newcommand{\eps}{\varepsilon}
\begin{document}

\address{IRMAR, Campus de Beaulieu, 35042 Rennes CEDEX, France}
\email{yves.decornulier@univ-rennes1.fr}
\subjclass[2000]{Primary 22B05; Secondary 20E15, 43A25, 54D05, 54F45}

\title[On the Chabauty space of LCA groups]{On the Chabauty space of locally compact abelian groups}
\author{Yves Cornulier}%
\date{November 24, 2010}



\begin{abstract}
This paper contains several results about the Chabauty space of a general locally compact abelian group. Notably, we determine its topological dimension, we characterize when it is totally disconnected or connected; we characterize isolated points.
\end{abstract}
\maketitle
\section{Introduction}

Let $X$ be a locally compact Hausdorff space. The set $\mathcal{F}(X)$ of closed subsets can be endowed with the {\it Chabauty topology}, which makes it a compact Hausdorff space. For this topology, a net $(Y_i)$ converges to $Y$ if and only if $(Y_i\cup\{\infty\})$ converges to $Y\cup\{\infty\}$ in the Hausdorff topology of the one-point compactification of $X$. When $X$ is second countable, $\mathcal{F}(X)$ is metrizable. See details in Paragraph \ref{chab}.
If $G$ is a locally compact group, the set $\mathcal{S}(G)$ of closed subgroups of $G$ is closed in $\mathcal{F}(G)$ and therefore is compact Hausdorff as well.

Introduced by Chabauty in \cite{Cha}, the Chabauty topology has been studied in \cite{HP,PTd,PT,PT2,FG1,FG2,BHK,Klo,Ha}, and more specifically for discrete groups in \cite{Gri,Chm,CGP,CGPab}. The fine study of $\mathcal{S}(G)$ is subtle even for apparently simple examples. While it is readily seen that $\mathcal{S}(\R)$ is homeomorphic to a segment, a tricky argument due to Hubbard and Pourezza \cite{HP} shows that $\mathcal{S}(\R^2)$ is homeomorphic to the 4-sphere. For $n\ge 3$, $\mathcal{S}(\R^n)$ is known to be singular (i.e.~not a topological manifold even with boundary) but Kloeckner \cite{Klo} showed that $\mathcal{S}(\R^n)$ has a natural ``stratification" which in particular makes it a simply connected and locally contractible space; however $\mathcal{S}(\R^n)$ has not yet unveiled all its mysteries and for instance it is still unknown whether it can be triangulated. Besides, Haettel \cite{Ha} gave a full description of the space $\mathcal{S}(\R\times\Z)$, showing in particular that it is path-connected but not locally connected, and has uncountable fundamental group.

Our first result, which underlies the proof of all others, is the continuity of the orthogonal map in Pontryagin duality. Let $G$ be a locally compact abelian group. Under the topology of uniform convergence, the abelian group $G^\vee=\Hom(G,\R/\Z)$ is locally compact, and the main result in Pontryagin duality is that the natural homomorphism $G\to (G^\vee)^\vee$ is a topological group isomorphism. In Bourbaki \cite[Chap.~II.2]{Bou}, Pontryagin duality is used to deduce fundamental results in the structure theory on locally compact abelian groups. One of the results \cite[Chap.~II.1, no.~7]{Bou} is that the orthogonal map \begin{align*}\mathcal{S}(G) & \;\;\to\;\;  \mathcal{S}(G^\vee)\\ H &  \;\;\mapsto\;\;  H^\curlywedge=\{\phi\in\Hom(G,\R/\Z):\phi(H)=0\}\end{align*} is a bijection.

\begin{thm}\label{pc}
Let $A$ be an abelian group and $\hat{A}=\Hom(A,\R/\Z)$ its Pontryagin dual. Then the orthogonal map $\mathcal{S}(A)\to\mathcal{S}(\hat{A})$, $H\to H^\curlywedge$, is a homeomorphism.
\end{thm}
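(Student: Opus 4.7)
My plan is to use double Pontryagin duality to reduce the theorem to a single continuity check. Under the canonical identification $A = \hat{\hat{A}}$, the orthogonal map $\mathcal{S}(\hat{A}) \to \mathcal{S}(A)$, $J \mapsto J^\curlywedge$, is a two-sided inverse of $H \mapsto H^\curlywedge$, so the bijectivity result just cited produces a bijection between two compact Hausdorff spaces. A continuous bijection between compact Hausdorff spaces is automatically a homeomorphism, so it suffices to show that $H \mapsto H^\curlywedge$ is continuous. By compactness this is equivalent to the closed-graph criterion: whenever $H_i \to H$ in $\mathcal{S}(A)$ and (after passing to a subnet) $H_i^\curlywedge \to L$ in $\mathcal{S}(\hat{A})$, one must prove $L = H^\curlywedge$.

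The inclusion $L \subseteq H^\curlywedge$ is the easy direction, handled by the joint continuity of the evaluation pairing $\hat{A} \times A \to \R/\Z$, a standard feature of the compact-open topology. Given $\phi \in L$ and $h \in H$, Chabauty convergence supplies $\phi_i \in H_i^\curlywedge$ with $\phi_i \to \phi$ and $h_i \in H_i$ with $h_i \to h$; passing to the limit in $\phi_i(h_i) = 0$ yields $\phi(h) = 0$.

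The reverse inclusion $H^\curlywedge \subseteq L$ is the crux. Given $\phi \in H^\curlywedge$ and a basic open neighborhood $\phi + V(K,\eps)$ of $\phi$ in $\hat{A}$ (with $K \subseteq A$ compact and $\eps > 0$), I must exhibit, for large $i$, a character in $H_i^\curlywedge$ lying in that neighborhood; writing such a character as $\phi - \delta_i$, this amounts to producing $\delta_i \in V(K,\eps)$ with $\delta_i|_{H_i} = \phi|_{H_i}$. The two ingredients I plan to combine are: (i) since $\phi$ is continuous with $\phi|_H = 0$, the open set $\phi^{-1}((-\eps,\eps))$ is a neighborhood of $H$, and Chabauty convergence forces $H_i \cap K' \subseteq \phi^{-1}((-\eps,\eps))$ eventually for every compact $K' \subseteq A$, so that $\phi|_{H_i}$ becomes uniformly small on compacts; and (ii) the open mapping theorem for LCA groups applied to the surjective restriction $\hat{A} \to \widehat{H_i}$, whose kernel is $H_i^\curlywedge$, which realises $\widehat{H_i}$ as a topological quotient of $\hat{A}$ and thereby ensures that small characters on $H_i$ lift to small characters on $A$.

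The delicate point, which I expect to be the main obstacle, is reconciling (i) and (ii) quantitatively: the neighborhoods of $0$ in the moving target $\widehat{H_i}$ must be controlled uniformly in $i$, while $K$ and $\eps$ are fixed on the $\hat{A}$-side. I would resolve this via the structure theorem for LCA groups, reducing to the case where $A$ contains an open subgroup of the form $\R^n \times C$ with $C$ compact; in that setting the relevant neighborhood bases of $\hat{A}$ and the restriction maps to closed subgroups admit explicit uniform descriptions, which close the gap and furnish the required $\delta_i$.
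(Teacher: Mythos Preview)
Your overall architecture is sound: reducing to continuity via the compact-Hausdorff bijection, then to the closed-graph criterion, is exactly right, and your proof of the inclusion $L\subseteq H^\curlywedge$ via joint continuity of the evaluation pairing is correct and clean. The paper also reduces the general case to a structured special case, so up to this point the two approaches run in parallel.

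The gap is precisely at what you call the ``delicate point'', and it is more serious than your last paragraph suggests. Passing to an open subgroup of the form $\R^n\times C$ with $C$ compact does not make the uniform lifting routine; already for $A=\R^n$ (with $C$ trivial) the assertion that a character of $H_i$ which is small on $H_i\cap K$ extends to a character of $\R^n$ small on $K$, \emph{uniformly in $i$}, is exactly the hard inclusion $H^\curlywedge\subseteq L$ you are trying to establish. The open mapping theorem gives, for each fixed $i$, some neighborhood $V_i$ of $0$ in $\widehat{H_i}$ whose elements lift into $V(K,\eps)\subset\hat A$, but it gives no control on $V_i$ as $i$ varies, and in particular no reason why $V_i\supseteq V(H_i\cap K',\eps')$ for fixed $K',\eps'$. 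Concretely, when $H_i\subset\R^n$ is a full-rank lattice with shortest vector tending to infinity (so $H_i\to\{0\}$ and $H^\curlywedge=\R^n$), what you need is that the covering radius of the dual lattice $H_i^\curlywedge$ tends to $0$; this is a transference inequality for lattices (Mahler's theorem, recorded here as Proposition~\ref{dide}) and does not follow from soft open-mapping or compactness arguments. The paper isolates the case $A=\R^n$ as Theorem~\ref{pce}, reduces via a moving change of basis (Lemma~\ref{matrices}) to precisely this situation, and then invokes Mahler's bound; only afterwards does it propagate to general $A$ by stability under closed subgroups, duals, and direct limits. Your sketch treats the $\R^n$ case as a formality, but that is where essentially all of the content lies.
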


We refer to this result as {\it Pontryagin-Chabauty duality}, and we develop consequences on the general structure of the space $\mathcal{S}(G)$. The next theorem deals with the topological (or covering) dimension, which is a (possibly infinite) integer number $\topdim(X)$ associated to any topological space $X$, invariant under homeomorphism, and for which $\topdim(\R^n)=n$ for all~$n$. See Paragraph \ref{todi} for details.

\begin{thm}[Theorem \ref{covdim}]
If $G$ is any locally compact abelian group, then the topological (covering) dimension of $\mathcal{S}(G)$ is given by
$$\topdim(\mathcal{S}(G))=\topdim(G)\topdim(G^\vee),$$
where $0\infty=\infty 0=0$. In particular, if $G=\R^d\times\Z^\ell\times\R/\Z^{m}$ then $$\topdim(\mathcal{S}(G))=(d+\ell)(d+m).$$\label{covdimi}
\end{thm}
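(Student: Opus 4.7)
By Pontryagin--Chabauty duality (Theorem \ref{pc}), $\mathcal{S}(G)$ is homeomorphic to $\mathcal{S}(G^\vee)$, so the stated formula is automatically symmetric in $G$ and $G^\vee$. My strategy is to first treat the ``Lie'' special case $G=\R^d\times\Z^\ell\times(\R/\Z)^m$, then reduce the general compactly generated case $G=\R^d\times\Z^\ell\times K$ (with $K$ compact abelian) to the Lie case by writing $K$ as an inverse limit of compact Lie groups $(\R/\Z)^{m_i}\times F_i$ and exploiting Chabauty-continuity of the induced maps on $\mathcal{S}(G)$, and finally reduce an arbitrary LCA group to the compactly generated case via the direct limit decomposition of $G$ into compactly generated open subgroups.

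\textbf{Lower bound, Lie case.} I would exhibit a subspace $U\subset\mathcal{S}(G)$ homeomorphic to a manifold of dimension exactly $(d+\ell)(d+m)$. Take $U$ to be the set of discrete cocompact subgroups $H\subset G$ such that $H\cap(\R/\Z)^m=\{0\}$ and $H$ surjects onto $\Z^\ell$ under the coordinate projection. Such an $H$ is uniquely specified by three pieces of data: a full-rank lattice $\Lambda\subset\R^d$, parametrized by $\GL_d(\R)/\GL_d(\Z)$ of dimension $d^2$; a homomorphism $\Z^\ell\to\R^d/\Lambda$ encoding the tilt of the $\Z^\ell$-factor relative to $\Lambda$, of dimension $d\ell$; and a homomorphism from $H\cong\Z^{d+\ell}$ into the torus $(\R/\Z)^m$, of dimension $m(d+\ell)$. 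The resulting parametrization is readily checked to be a homeomorphism onto $U$, and $\dim U=d^2+d\ell+m(d+\ell)=(d+\ell)(d+m)$. Since $\mathcal{S}(G)$ is a compact metric space, monotonicity of the topological dimension under subspaces yields $\topdim\mathcal{S}(G)\geq(d+\ell)(d+m)$.

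\textbf{Upper bound and general case.} For the matching upper bound in the Lie case, I would stratify $\mathcal{S}(G)$ into countably many locally closed pieces indexed by the combinatorial type of $H$: the dimension of its vector part inside $\R^d$, the rank of its discrete part modulo that vector part, and the isomorphism type of $H\cap(\R/\Z)^m$. Each stratum is a manifold whose dimension is bounded by $(d+\ell)(d+m)$ via parametrizations analogous to the one above; combined with the standard countable closed-cover theorem for topological dimension on compact metric spaces, this yields the upper bound. I expect the principal obstacle to be the passage to an arbitrary LCA group: one needs enough continuity control in both the inverse limit (for the compact factor $K$) and the direct limit (for the compactly generated approximations of $G$) to match $\topdim(\mathcal{S}(G))$ with $\topdim(G)\topdim(G^\vee)$, while respecting the convention $0\cdot\infty=\infty\cdot 0=0$ in the infinite-dimensional regime. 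This step will combine the orthogonal-map continuity from Theorem \ref{pc}, the observation that the Chabauty construction carries inverse limits of compact LCA groups to inverse limits of Chabauty spaces, and uniform dimension bounds on the stratifications built at each finite stage.
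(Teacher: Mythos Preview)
Your lower-bound construction and your reduction of the general LCA group to the Lie case via inverse and direct limits are essentially what the paper does (the paper's Theorem~\ref{klm} builds the same $(d+\ell)(d+m)$-dimensional orbit, and the paper's Theorem~\ref{covdim} uses the same two-stage projective-limit passage, invoking the inverse-limit dimension bound of Proposition~\ref{toporef}(\ref{invers})).

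The substantive divergence is in the upper bound for the Lie case. You propose to stratify $\mathcal{S}(\R^d\times\Z^\ell\times(\R/\Z)^m)$ by combinatorial type and apply the countable sum theorem. The paper explicitly flags this style of argument as delicate: in the proof of Theorem~\ref{rddd} it notes that the $\GL_d(\R)$-orbit partition of $\mathcal{S}(\R^d)$ into manifolds of dimension $\le d^2$ does \emph{not} obviously yield $\inddim(\mathcal{S}(\R^d))\le d^2$, and instead invokes Kloeckner's result that this partition is a Goresky--MacPherson stratification. The issue is that you must verify each stratum is locally closed and that its subspace topology agrees with the manifold topology coming from your parametrization; without this, the sum theorem does not apply. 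Carrying out your plan would amount to extending Kloeckner's analysis from $\R^d$ to $\R^d\times\Z^\ell\times(\R/\Z)^m$, which is considerably more work than you indicate.

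The paper sidesteps this by an inductive fiber argument: Lemma~\ref{dgz} shows $\inddim(\mathcal{S}(G\times\Z))\le\inddim(\mathcal{S}(G))+\inddim(G)$ via the Hurewicz fiber-dimension bound (Lemma~\ref{dfiber}) applied to the restriction map $\mathcal{S}(G\times\Z)\to\mathcal{S}(G)$, whose fibers are one-point compactifications of $G/H\times\Z_{>0}$. Iterating this and its Pontryagin dual reduces the full Lie upper bound to the single input $\inddim(\mathcal{S}(\R^d))=d^2$ from Kloeckner. This is more economical: all the hard stratification topology is confined to the $\R^d$ case already in the literature.

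One further gap: your sketch does not handle the case $\topdim(G)\cdot\topdim(G^\vee)=0$ with one factor infinite. The paper treats this separately, observing that $\topdim(G)=0$ or $\topdim(G^\vee)=0$ forces $G$ to be totally disconnected or elliptic, whence $\mathcal{S}(G)$ is totally disconnected by Corollary~\ref{std}; this does not fall out of the limit argument and needs its own proof.
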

Several characterizations of $\topdim(G)$ and $\topdim(G^\vee)$ are recalled in Paragraph \ref{dimlca}. Theorem \ref{covdimi} is based on the non-trivial special case $$\topdim(\mathcal{S}(\R^d))=d^2,$$
which follows from Kloeckner's local description \cite{Klo}, see 
Section \ref{sec:Dimension}.

If $G$ is a locally compact abelian group, from classical theory it can be written as $\R^k\times M$ so that $M$ has a compact open subgroup; the finite number $R(G)=k$ is uniquely defined (see Paragraph \ref{invr}). The following results bring out a dichotomy between the case $R(G)=0$ (i.e.\ $G$ is compact-by-discrete) and the case $R(G)\ge 1$.

\begin{thm}[Section \ref{seco}]
Let $G$ be a locally compact abelian group with $R(G)\ge 1$. Then $\mathcal{S}(G)$ is connected. If moreover $G$ is a Lie group and is compactly generated (i.e.\ $G/G_0$ is finitely generated, where $G_0$ is the unit component), then $\mathcal{S}(G)$ is path-connected.\label{r1i} 
\end{thm}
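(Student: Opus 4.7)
The plan is to exploit the one-parameter group of automorphisms of $G$ that scales the $\R^k$-factor. Write $G = \R^k \times M$ with $k = R(G) \ge 1$ and $M$ admitting a compact open subgroup. For each $t > 0$, let $\phi_t \in \textnormal{Aut}(G)$ be defined by $\phi_t(x, m) = (tx, m)$; this gives a continuous action of the connected group $\R_{>0}$ on $\mathcal{S}(G)$. Consequently, for any $H \in \mathcal{S}(G)$, the orbit $\{\phi_t(H) : t > 0\}$ is a connected subset, and so is its closure in the compact Hausdorff space $\mathcal{S}(G)$.

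To prove connectedness of $\mathcal{S}(G)$, I would show that every $H$ lies in the connected component of the trivial subgroup $\{0\}$ by chaining finitely many such orbit closures. The key step is to analyze the Chabauty cluster set of $\phi_t(H)$ in the limits $t \to 0^+$ and $t \to +\infty$. As $t \to +\infty$, elements of $H$ with nonzero $\R^k$-component are expelled to infinity, so any cluster point is a closed subgroup of $\{0\} \times M$. As $t \to 0^+$, the subgroup is densified along directions in which the $\R^k$-projection of $H$ is unbounded, so any cluster point contains a vector subspace of $\R^k$ determined by the asymptotics of $H$. To bridge back to $\{0\}$ from a closed subgroup $N \le M$ (viewed inside $G$), one considers the auxiliary orbit closure of $\Z(v, n_0)$ for $v \in \R^k\setminus\{0\}$ and $n_0 \in N$: as $t \to +\infty$ this collapses to $\{0\}$, while as $t \to 0^+$ it tends to $\{0\} \times \overline{\langle n_0\rangle}$, yielding the required bridge. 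The use of the $\R^k$-factor here is essential, because $\mathcal{S}(M)$ alone may be disconnected (for instance $\mathcal{S}(\Z)$ is totally disconnected).

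For the second assertion, assume further that $G$ is compactly generated and Lie, so that $G \cong \R^d \times \Z^\ell \times (\R/\Z)^m \times F$ with $F$ finite abelian and $d = R(G) \ge 1$. In this finite-dimensional setting, closed subgroups of $G$ admit an explicit parametrization in terms of vector subspaces, lattices in quotient spaces, and finite data, and the Chabauty limits of the scaling orbits exist in the strict sense (not merely as cluster points). Hence the maps $t \mapsto \phi_t(H)$ extend to honest continuous paths $[0,\infty] \to \mathcal{S}(G)$, and the bridging argument above can likewise be realized as explicit continuous paths parametrized by lattice generators, yielding path-connectedness. The main obstacle is the precise description of the Chabauty limits of $\phi_t(H)$---enough to chain several orbit closures (or paths) into a single connected route from $H$ to $\{0\}$---and, in the general locally compact setting, the handling of $M$ when $\mathcal{S}(M)$ is disconnected, where one must rely on connectedness of orbit closures rather than actual continuous paths.
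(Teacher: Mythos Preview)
Your scaling idea is sound and matches the paper's opening move (Lemma~\ref{pathbase}): contracting the $\R^k$-factor sends $H$ to $W\times L_1$, where $L_1$ is the closure of the projection of $H$ on $M$, and this, together with the connectedness of $\mathcal{S}(\R^k)$, puts $H$ in the same component as $\{0\}\times L_1$. The heart of the problem is then exactly your ``bridging'' step: connecting $\{0\}\times N$ to $\{0\}$ for an \emph{arbitrary} closed subgroup $N\le M$.

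Here your argument has a genuine gap. The orbit closure of $\Z(v,n_0)$ only links $\{0\}$ to $\{0\}\times\overline{\langle n_0\rangle}$ (and in fact the $t\to 0^+$ limit is $\R v\times\overline{\langle n_0\rangle}$, not $\{0\}\times\overline{\langle n_0\rangle}$, whenever $\overline{\langle n_0\rangle}$ meets a compact open subgroup nontrivially---though this is repairable via $\mathcal{S}(\R^k)$). That reaches only \emph{monothetic} subgroups of $M$. To reach a general $N$ you would need to iterate, replacing $\Z(v,n_0)$ by $(\{0\}\times N')+\Z(v,n_1)$ to pass from $N'$ to $\overline{N'+\langle n_1\rangle}$. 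You do not spell this out, and more seriously, for general $M$ the subgroup $N=L_1$ need not be topologically finitely generated, so no finite chain of such moves suffices. Your closing remark (``rely on connectedness of orbit closures'') does not address this.

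The paper handles the bridging differently. In the compactly generated Lie (more generally, metacircular) case it uses a finite composition series of $M$ with \emph{circular} subquotients---groups injecting into $\R/\Z$---and for each such subquotient constructs an explicit path (Lemma~\ref{boutdechemin}) by lifting the injection to a ``graph'' subgroup of $\R\times M$ and scaling it. This is close in spirit to your $\Z(v,n_0)$ move but works one subquotient at a time and does not require monothetic pieces. For the general connectedness statement the paper does \emph{not} attempt a direct bridging at all: it writes $\mathcal{S}(G)$ as an inverse limit of the spaces $\mathcal{S}(\R^k\times M_i)$ over compactly generated Lie (or metacircular) approximants $M_i$, each of which is connected by the first part, and concludes by the fact that an inverse limit of connected compact spaces is connected. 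This projective-limit step is essential and is entirely missing from your plan.

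A smaller inaccuracy: your claim that every cluster point as $t\to+\infty$ lies in $\{0\}\times M$ fails whenever $H$ contains (or projects onto) a nontrivial vector subspace of $\R^k$; for instance $\phi_t(\R\times\{0\})=\R\times\{0\}$ for all $t$. The correct and useful limit is the $t\to 0^+$ one, which always exists and equals $W\times L_1$ by Lemma~\ref{pathbase}.
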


Nevertheless, Proposition \ref{sra} exhibits countable discrete abelian groups $A$ such that $\mathcal{S}(\R\times A)$ is not path-connected.

\begin{exe}
By Theorems \ref{covdimi} and \ref{r1i}, if $G$ is a locally compact abelian group, then $\mathcal{S}(G)$ is both connected and one-dimensional if and only if $G\simeq \R\times H$, where $H$ is profinite-by-(discrete torsion). In many cases, like $\R\times\Q_p$, it also follows from Theorem \ref{con} that $\mathcal{S}(G)$ is path-connected. It would be interesting to have a closer look into $\mathcal{S}(G)$ for those examples. 
\end{exe}

If $X$ is a totally disconnected compact Hausdorff space, we define $\pi_0(X)$ as the quotient of $X$ by its partition by connected components. By \cite[III.4.4, Proposition~7]{BourbakiT}, $\pi_0(X)$ is compact, Hausdorff and totally disconnected. Section \ref{sec:rzero} studies locally compact groups $G$ with $R(G)=0$ and more precisely the connected components of $\mathcal{S}(G)$. In particular we get

\begin{thm}\label{pizi}
Let $G$ be a locally compact abelian group with $R(G)=0$. Then
\begin{itemize}
\item $\pi_0(\mathcal{S}(G))$ is infinite if and only if $G$ is infinite;
\item every connected component of $\mathcal{S}(G)$ is homeomorphic to a compact group; if $G$ is a compactly generated Lie group, these components are tori.
\item $\mathcal{S}(G)$ is totally disconnected if and only if $G$ is either totally disconnected or elliptic (i.e.~is the union of its compact subgroups). 
\end{itemize}
\end{thm}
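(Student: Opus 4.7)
Since $R(G)=0$, fix a compact open subgroup $K$ of $G$, so that $G/K$ is discrete. The plan is to analyze $\mathcal{S}(G)$ through the map
$$\Phi : \mathcal{S}(G) \longrightarrow \mathcal{S}(K)\times \mathcal{S}(G/K), \qquad H \mapsto (H\cap K,\; HK/K).$$
The first step is to verify that $\Phi$ is continuous: openness of $K$ makes $H\mapsto H\cap K$ Chabauty-continuous (any Chabauty limit of points of $H_i\cap K$ stays in $K$, and every $h\in H\cap K$ is approximable by $h_i\in H_i$ that eventually enter the open set $K$), while each coset $xK$ being both open and compact forces the subbasic sets $\{H:H\cap xK\neq\emptyset\}$ to be clopen, giving continuity of $H\mapsto HK/K$ into the (discrete) Chabauty space $\mathcal{S}(G/K)$. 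The target is totally disconnected: $\mathcal{S}(G/K)$ sits as a closed subspace of the Cantor space $2^{G/K}$, and $\mathcal{S}(K)\cong \mathcal{S}(\hat K)$ by Theorem~\ref{pc}, where $\hat K$ is again discrete. Hence every connected component of $\mathcal{S}(G)$ is contained in a single fiber $\mathcal{S}(G;L,J):=\Phi^{-1}(L,J)$.

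The crux is to identify each nonempty fiber with a compact abelian group. Writing $JK$ for the preimage of $J$ in $G$ and $\pi:JK\to JK/L$ for the quotient, a subgroup $H\in\mathcal{S}(G;L,J)$ is precisely a complement of $K/L$ in $JK/L$ projecting isomorphically onto $J$, equivalently the preimage $\pi^{-1}(s(J))$ of a group-theoretic splitting $s$ of the short exact sequence $0\to K/L\to JK/L\to J\to 0$. Fixing one base splitting makes the fiber a torsor over $\Hom(J,K/L)$, which is a compact abelian group (closed in $(K/L)^J$, with $J$ discrete and $K/L$ compact). I would then show the parametrization $s\mapsto H_s$ is a homeomorphism by verifying both Chabauty criteria directly from pointwise convergence of splittings (discreteness of $J$ forces any subsequential limit of elements in $H_{s_i}$ to land in $H_s$) and invoking compactness for the reverse direction. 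Consequently the connected component of $H$ in $\mathcal{S}(G)$ equals its component inside this fiber, which is a coset of the identity component $\Hom(J,K/L)_0$; this is a compact connected abelian group, establishing the second bullet in general.

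The remaining bullets then fall out. For the Lie refinement: if $G$ is compactly generated abelian Lie with $R(G)=0$, the structure theorem gives $G\cong\Z^b\times T^a\times F$ with $F$ finite, and taking $K=T^a\times F$ renders $K/L$ compact Lie abelian (so $\cong T^c\times F'$) and $J\leq G/K\cong\Z^b$ free of some rank $b'$, whence $\Hom(J,K/L)_0\cong T^{cb'}$ is a torus. For the first bullet, if $K$ is infinite then the pairs $(L,0)$ (each with fiber $\{L\}$) already give infinitely many nonempty fibers, while if $K$ is finite but $G$ is infinite then $G/K$ is infinite and the pairs $(K,J)$ (each with fiber $\{JK\}$) do; conversely $G$ finite makes $\mathcal{S}(G)$ finite. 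For the third bullet, $\mathcal{S}(G)$ is totally disconnected iff $\Hom(J,K/L)_0=\{0\}$ for every realized pair. If $G$ is totally disconnected then $K/L$ is profinite, hence so is $\Hom(J,K/L)$. If $G$ is elliptic then every $\overline{\langle g\rangle}$ is compact, and $\overline{\langle g\rangle}\cap K$ has finite index in it, so $G/K$ is torsion; hence each $J$ is torsion and $\Hom(J,K/L)\subseteq\prod_j (K/L)[\mathrm{ord}(j)]$ is profinite, since a compact abelian group of bounded exponent has divisible, hence trivial, identity component. Conversely if $G$ is neither, the same computation yields $g\in G$ with $gK$ of infinite order; taking $J=\langle gK\rangle\cong\Z$ and $L=0$ with the section $n\mapsto ng$ makes the fiber $\cong\Hom(\Z,K)=K$, whose nontrivial identity component $K_0$ delivers a nontrivial component of $\mathcal{S}(G)$.

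The main technical obstacle is the full-generality homeomorphism $\mathcal{S}(G;L,J)\cong\Hom(J,K/L)$: continuity of $s\mapsto H_s$ requires careful approximation of elements of $H_s$ by elements in $H_{s_i}$, after which compactness inverts it. Once this is in hand, everything else reduces to Pontryagin-Chabauty duality and standard facts about compact abelian groups.
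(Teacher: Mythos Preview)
Your argument is correct and follows the paper's general strategy: analyze $\mathcal{S}(G)$ through the continuous map of Proposition~\ref{split} into a totally disconnected target, then read off connected components from the fibers, which are compact abelian $\Hom$-groups. The identification of the fiber as a $\Hom(J,K/L)$-torsor and the case analysis for the three bullets are all sound.

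The one genuine difference is the choice of the pair $(K,\Omega)$ in Proposition~\ref{split}. You take $K=\Omega$ to be any compact open subgroup; the paper instead takes $K=G_0$ and $\Omega=E_G$ (Theorem~\ref{tcon}). With the paper's choice, the domain $(N+E_G)/E_G$ of the $\Hom$-group is torsion-free discrete and the range $G_0/(G_0\cap L)$ is compact connected, so Lemma~\ref{hnz} makes each nonempty fiber \emph{already connected}. Thus the connected components of $\mathcal{S}(G)$ are exactly the fibers, and one gets an explicit formula for the component through $H$. In your approach the fibers $\Hom(J,K/L)$ need not be connected, so you must pass to cosets of the identity component $\Hom(J,K/L)_0$; this still yields compact connected abelian groups, but the description is less intrinsic (it depends on the auxiliary choice of $K$). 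On the other hand, your route avoids the extra input of Lemma~\ref{hnz} and keeps everything at the level of a single compact open subgroup, which is arguably more elementary. For bullet three your argument and the paper's Lemma~\ref{ttdd} essentially coincide.
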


Also, Theorem \ref{pizero} gives a structure result for locally compact abelian groups for which $\mathcal{S}(G))$ has countably many components countable. 
We extract from it

\begin{thm}\label{pizeroi}
Let $G$ be a locally compact abelian group which is neither discrete nor compact. Then $\mathcal{S}(G)$ has (at most) countably many connected components if and only if one of the following condition holds
\begin{itemize}
\item $G=\Q_\ell\times\Z_m\times \mathbf{C}_{n^\infty}\times F$, where $F$ is finite, $\ell,m,n\ge 1$, $\ell$ is prime with $mn$;
\item $G=K\times D\times F$ with $D$ torsion-free discrete, $K$ compact connected, $F$ finite, and $\mathcal{S}(D)$ and $\mathcal{S}(K)$ are countable;
\item $R(G)\ge 1$.
\end{itemize}
\end{thm}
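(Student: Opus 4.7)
My plan is to dichotomize on $R(G)$. If $R(G)\ge 1$, then Theorem \ref{r1i} gives that $\mathcal{S}(G)$ is connected, so $\pi_0(\mathcal{S}(G))$ is a singleton; this handles both implications whenever the third bullet is in play and leaves $R(G)=0$ as the substantive case. In that case, $G$ is compact-by-discrete and, by the standing hypothesis, neither compact nor discrete.

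For $R(G)=0$, my approach is to invoke the structural Theorem \ref{pizero} (alluded to in the introduction) as a classification black box: it determines all LCA groups with countable $\pi_0(\mathcal{S}(G))$, and the job of Theorem \ref{pizeroi} is to read off the cases compatible with $G$ being neither compact nor discrete. These should yield exactly the first two listed forms. For the backward direction I would verify directly: in the first form $G=\Q_\ell\times\Z_m\times\mathbf{C}_{n^\infty}\times F$ with $\gcd(\ell,mn)=1$, Pontryagin-Chabauty duality (Theorem \ref{pc}) together with the coprime hypothesis lets one split $\mathcal{S}(G)$ into a finite product of local Chabauty spaces indexed by the primes dividing $\ell mn$ (plus a contribution from $F$), each with finitely many components; in the second form $G=K\times D\times F$, the constraints that $K$ is connected and $D$ torsion-free—combined with Theorem \ref{pizi}, which identifies each component of $\mathcal{S}(G)$ with a compact subgroup—force $\pi_0(\mathcal{S}(G))$ to fiber over a countable set built from $\mathcal{S}(K)$, $\mathcal{S}(D)$, and subgroups of $F$.

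The main obstacle is the forward direction for $R(G)=0$, which is really the content of Theorem \ref{pizero}. This requires a careful analysis to rule out intermediate groups such as $\prod_p\Z_p$ over infinitely many primes or infinite discrete direct sums of finite cyclic groups, whose Chabauty spaces have uncountable $\pi_0$ because of the independence of the primary factors. Pontryagin-Chabauty duality is essential here to treat $G$ and its dual $\hat{G}$ symmetrically, so that once an obstruction to countability is identified in either the compact part or the discrete quotient of $G$, it propagates to $\mathcal{S}(G)$ as a whole. Once Theorem \ref{pizero} is in hand, the deduction of Theorem \ref{pizeroi} reduces to verifying that, among the groups it allows, only the two listed forms satisfy both $R(G)=0$ and the non-degeneracy condition that $G$ be neither compact nor discrete.
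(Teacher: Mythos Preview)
Your approach is essentially the paper's: Theorem \ref{pizeroi} is not proved separately but is extracted from Theorem \ref{pizero} together with the connectedness result for $R(G)\ge 1$ (Theorem \ref{con}). You correctly identify that when $R(G)=0$ and $G$ is neither compact nor discrete, cases (1') and (2') of Theorem \ref{pizero}(d) drop out, leaving exactly the first two bullets.

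One remark: your separate ``direct verification'' of the backward direction is redundant, since Theorem \ref{pizero} is an equivalence and already supplies both implications. Your sketch there also contains a slip: for $G=\Q_\ell\times\Z_m\times\mathbf{C}_{n^\infty}\times F$, the local factors $\mathcal{S}(\Q_p)$, $\mathcal{S}(\Z_p)$, $\mathcal{S}(\mathbf{C}_{p^\infty})$ each have \emph{countably infinitely} many components (they are infinite totally disconnected spaces), not finitely many; the correct conclusion, as the paper notes just after the statement, is that $G$ is simultaneously elliptic and totally disconnected, so $\mathcal{S}(G)$ is itself countable by Theorem \ref{pizi}. This does not affect the overall plan.
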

Note that in the first case $G$ is both elliptic and totally disconnected so $\mathcal{S}(G)$ is countable itself, while in the second case $\mathcal{S}(G)$ is not totally disconnected by Theorem \ref{pizi}. Note that the case when $G$ is discrete (characterization of abelian groups with countably many subgroups) is done in \cite{Boy} (see Proposition \ref{boyy}) and the compact case follows by Pontryagin duality. Note that the proof of Theorem \ref{pizeroi} involves an intrinsic characterization of finite direct products of $p$-adic groups $\Q_p$ (see Lemma \ref{caradic}).

The next result concerns isolated points. When $G$ is a discrete abelian group, a necessary and sufficient condition for a subgroup $H$ to be isolated in $\mathcal{S}(G)$ is that $H$ is isolated in $\mathcal{S}(H)$ and $\{1\}$ is isolated in $\mathcal{S}(G/H)$ (see for insytance \cite{CGPab}). However when $G$ is not assumed discrete, this condition obviously remains necessary but is not sufficient: for instance $\Z$ is not isolated in $\mathcal{S}(\R)$. The characterization of isolated points goes as follows (we refer to Paragraph \ref{subcla} for the definition of Artinian and adic groups)

\begin{thm}\label{posil}
Let $G$ be a locally compact abelian group and $H$ a closed subgroup.
Then $H$ is an isolated point in $\mathcal{S}(G)$ if and only if we are in one of the two (dual to each other) cases
\begin{enumerate}
\item\label{tddddd} $H\simeq A\times P$, $G/H\simeq D$, $A$ finitely generated abelian, $P$ adic, $D$ Artinian (so $G$ is totally disconnected);
\item $H\simeq P$, $G/H\simeq T\times D$, $P$ adic, $T$ torus, $D$ Artinian (so $G$ is elliptic).
\end{enumerate}
\end{thm}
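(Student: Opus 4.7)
\emph{Plan and reduction.} The two cases in the statement are exchanged under Pontryagin--Chabauty duality: by Theorem~\ref{pc}, $H\mapsto H^\curlywedge$ is a homeomorphism $\mathcal{S}(G)\to\mathcal{S}(G^\vee)$, and a direct inspection of the Pontryagin duals (using the conventions of Paragraph~\ref{subcla}: fg abelian dualises to torus-times-finite, adic groups dualise to their Artinian or adic counterparts) shows that a configuration of type~(1) for $H\subseteq G$ corresponds to a configuration of type~(2) for $H^\curlywedge\subseteq G^\vee$. It therefore suffices to prove that $H$ is isolated in $\mathcal{S}(G)$ if and only if case~(1) holds.

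\emph{Sufficiency.} Assume $H\simeq A\times P$ and $G/H\simeq D$ as in~(1), so that $G$ is totally disconnected. I would fix a compact open subgroup $U\subseteq G$ small enough that $U\cap H$ lies in the compact part of $P$ and $U$ projects trivially to $D$. For any $H'\in\mathcal{S}(G)$ Chabauty-close to $H$, the image of $H'$ in $D$ is close to $\{0\}$; since $D$ is Artinian, $\{0\}$ is isolated in $\mathcal{S}(D)$, forcing $H'\subseteq H$. Closeness in $\mathcal{S}(H)$ then forces $H'$ to contain a fixed finite generating set of $A$ and to agree with $H$ on the compact part of $P$, so that $H'=H$.

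\emph{Necessity and main obstacle.} Suppose $H$ is isolated. First, I argue $R(G)=0$: if $G\simeq\R\times G'$, then the scaling action of $\R^{\times}$ on the $\R$-factor extends to a continuous action on $\mathcal{S}(G)$ under which $H$ sits in a non-trivial orbit, except in degenerate configurations (such as $H\subseteq\{0\}\times G'$ or $H\supseteq\R\times\{0\}$) which one handles by an explicit deformation modelled on $(1+t)\Z\to\Z$ inside $\R$. Hence $G$ has a compact open subgroup. Next, $H$ must itself be totally disconnected: any non-trivial closed connected subgroup of $H$ (a subtorus of the identity component) would be approximable in $\mathcal{S}(H)$, hence in $\mathcal{S}(G)$, by strictly smaller subgroups such as its dense finite cyclic subgroups, contradicting isolation. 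Letting $G_0$ denote the identity component of $G$ (a torus in the $R(G)=0$ regime), it follows that $H\cap G_0=\{0\}$, and cases~(1) and~(2) correspond to $G_0=\{0\}$ (so $G$ is tdc) and $G_0\neq\{0\}$ (so $G$ is elliptic, $G_0$ covering the torus factor $T$ of $G/H$). The most delicate remaining step is to translate the Chabauty conditions ``$H$ iso in $\mathcal{S}(H)$'' and ``$\{0\}$ iso in $\mathcal{S}(G/H)$'' into the algebraic decompositions claimed for $H$ and $G/H$: finite generation of $A$, adicity of $P$, Artinian of $D$, torus of $T$. These conditions are \emph{not} individually sufficient without the preliminary $R(G)=0$ reduction (witness $\Z\subset\R$), and the translation itself rests on choosing a small open-compact subgroup, on Pontryagin duality to swap the ACC that characterises finite generation with the DCC that characterises Artinianness, and on the discrete-group characterisation of \cite{CGPab}.
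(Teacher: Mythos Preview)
Your duality reduction is mis-stated: since cases~(1) and~(2) are exchanged (not identified) by duality, the claim ``$H$ isolated iff case~(1)'' is strictly stronger than the theorem and in fact false. More seriously, your necessity argument has a genuine gap. You propose that, once $R(G)=0$ and $H$ is totally disconnected with $H\cap G_0=\{0\}$, the two cases are separated by whether $G_0$ is trivial. Consider $G=\Z\times(\R/\Z)$ and $H=\Z\times\{0\}$: here $R(G)=0$, $H$ is totally disconnected, $H\cap G_0=\{0\}$, $G_0=\R/\Z\neq\{0\}$, and both local conditions hold ($H$ isolated in $\mathcal{S}(H)$, $\{0\}$ isolated in $\mathcal{S}(\R/\Z)$) --- yet $H$ is \emph{not} isolated in $\mathcal{S}(G)$, being approached by the cyclic groups $\langle(1,t)\rangle$ as $t\to 0$. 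Your scheme would assign this to case~(2), but $H\simeq\Z$ is not adic. The missing ingredient is precisely Lemma~\ref{zzrz}: an infinite finitely generated part in $H$ and a non-trivial torus in $G/H$ together obstruct isolation, and this is independent of the condition $R(G)=0$. Two subsidiary claims are also wrong: $G_0$ need not be a torus when $R(G)=0$ (any compact connected abelian group can occur), and $H$ totally disconnected does not force $H\cap G_0=\{0\}$ (e.g.\ $\hat{\Z}$ sits inside the compact connected dual of $\Q$).

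The paper avoids all of this by first proving the two extremal characterisations: $\{0\}$ is isolated in $\mathcal{S}(G)$ iff $G\simeq T\times D$ with $T$ a torus and $D$ Artinian (Lemma~\ref{zeri}), and dually $G$ is isolated in $\mathcal{S}(G)$ iff $G\simeq A\times P$ with $A$ finitely generated and $P$ adic (Lemma~\ref{gi}). Since $\mathcal{S}(H)$ and $\mathcal{S}(G/H)$ embed in $\mathcal{S}(G)$ sending $H$ and $\{0\}$ respectively to $H$, isolation of $H$ immediately yields $H\simeq A\times P$ and $G/H\simeq T\times D$, with no preliminary $R(G)=0$ step. The sole remaining constraint is that $A$ infinite and $T\neq 0$ cannot coexist, which is Lemma~\ref{zzrz}; the two cases of the theorem are then $T=0$ and $A$ finite respectively. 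For sufficiency in case~(1), the paper uses that $H$ is \emph{open} (since $G/H=D$ is discrete), so the intersection map $L\mapsto L\cap H$ is continuous and yields $H\subseteq L$ first; only then does one pass to the discrete quotient $G/H$. Your sufficiency sketch runs in the opposite order via the projection $L\mapsto (L+H)/H$, which is not continuous when $H$ is non-compact.
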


In \cite{CGPab}, the study was pursued to a Cantor-Bendixson analysis of $\mathcal{S}(G)$, when $G$ is a countable discrete abelian group, leading to the determination of the homeomorphism type of $\mathcal{S}(G)$.
It would be interesting to generalize this to second countable, totally disconnected locally compact abelian groups. Among those groups, those for which this question is nontrivial have a very special form: they have to lie in 
an exact sequence $$0\to A\times P\to G\to D\to 0$$ with $A$, $P$, $D$ as in Theorem \ref{posil}(\ref{tddddd}), since otherwise if $G$ is not of this form, by the same theorem, $\mathcal{S}(G)$ has no isolated point and therefore is a Cantor space.

\tableofcontents

\noindent {\bf Acknowledgements.} I thank P.~de la Harpe, B.~Kloeckner and R.~Tessera for useful comments.

\section{Preliminaries}

This section recalls definitions and basic results used throughout the paper.


\subsection{The Chabauty topology}\label{chab}

Recall that if $X$ is a locally compact space, we denote by $\mathcal{F}(X)$ the set of closed subsets of $X$. This set has several natural topologies. 
 The one of interest for us is called the {\it compact topology}, and is compact Hausdorff. It appears in an exercise by Bourbaki \cite[Chap.~8, \S 5]{BouI} and was reintroduced by Narens \cite{Nar} in the context and language of non-standard analysis and its description in standard terms was then provided by Wattenberg \cite{Wat}. Given a compact subset $K\subset X$ and open subsets $U_1,\dots,U_k\subset X$, define
$$\Omega(K;U_1,\dots,U_k)=\{F\in\mathcal{F}(X):F\cap K=\emptyset;\forall i,F\cap U_i\neq\emptyset\};$$
these set form the basis of the compact topology on $\mathcal{F}(X)$.

The reader can prove as a simple exercise the following characterization of converging nets in $\mathcal{F}(X)$.
\begin{lem}For a net $(F_i)$ of closed subsets of $X$ and $F\in\mathcal{F}(X)$, we have equivalences
\begin{itemize}
\item $F_i\to F$ in the compact topology;
\item for every compact subset $K$ and any open subsets $U_1,\dots,U_k$ with $F\cap K=\emptyset$ and $F\cap U_j\neq\emptyset$ for each $j$, we have eventually $F_i\cap K=\emptyset$ and $F_i\cap U_j\neq\emptyset$ for each $j$.
\item For every $x\in F$ and every neighbourhood $V$ of $x$, we eventually have $F_i\cap V\neq\emptyset$, and for every $x\in X-F$ there exists a neighbourhood $V$ of $x$ such that eventually $F_i\cap V=\emptyset$.
\end{itemize}
\end{lem}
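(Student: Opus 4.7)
The plan is to prove the chain of implications $(\mathrm{i})\Leftrightarrow(\mathrm{ii})\Rightarrow(\mathrm{iii})\Rightarrow(\mathrm{ii})$, treating $(\mathrm{i})\Leftrightarrow(\mathrm{ii})$ as essentially tautological and using local compactness as the active ingredient in the remaining implications.

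First I would observe that the sets $\Omega(K;U_1,\dots,U_k)$ form a basis (not merely a subbasis) for the compact topology: the intersection of $\Omega(K;U_1,\dots,U_k)$ and $\Omega(K';U'_1,\dots,U'_\ell)$ is $\Omega(K\cup K';U_1,\dots,U_k,U'_1,\dots,U'_\ell)$. Therefore a net converges to $F$ in the compact topology iff it is eventually in every such basic neighborhood of $F$, and the basic neighborhoods of $F$ are exactly the $\Omega(K;U_1,\dots,U_k)$ with $F\cap K=\emptyset$ and $F\cap U_j\ne\emptyset$ for each $j$. This is word-for-word condition (ii), so $(\mathrm{i})\Leftrightarrow(\mathrm{ii})$.

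Next, for $(\mathrm{ii})\Rightarrow(\mathrm{iii})$: if $x\in F$ and $V$ is an open neighborhood of $x$, apply (ii) with $K=\emptyset$ and the single open $V$, obtaining $F_i\cap V\ne\emptyset$ eventually. If instead $x\in X\setminus F$, use local compactness of $X$ together with the fact that $F$ is closed to pick a compact neighborhood $K$ of $x$ disjoint from $F$; applying (ii) with this $K$ and no open sets gives $F_i\cap K=\emptyset$ eventually, and taking $V$ to be the interior of $K$ yields the desired neighborhood.

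Finally, for $(\mathrm{iii})\Rightarrow(\mathrm{ii})$, fix $K$ and $U_1,\dots,U_k$ with $F\cap K=\emptyset$ and $F\cap U_j\ne\emptyset$. For each $x\in K$ we have $x\notin F$, so by (iii) there is an open neighborhood $V_x$ of $x$ with $F_i\cap V_x=\emptyset$ eventually; cover $K$ by finitely many $V_{x_1},\dots,V_{x_n}$ using compactness of $K$, and since finitely many ``eventually'' conditions can be combined along a net, we get $F_i\cap K=\emptyset$ eventually. For each $j$, pick any $x_j\in F\cap U_j$ and apply the first half of (iii) to obtain $F_i\cap U_j\ne\emptyset$ eventually. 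Combining the finitely many eventual conditions yields (ii). The only nontrivial step throughout is the extraction, via local compactness, of a compact neighborhood of $x\notin F$ disjoint from $F$; everything else is straightforward net bookkeeping.
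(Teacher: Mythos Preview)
Your argument is correct and is the standard one; the paper does not actually give a proof of this lemma, stating only that ``the reader can prove as a simple exercise the following characterization.'' Your write-up would serve perfectly well as that exercise's solution. One tiny point of bookkeeping: in the step $(\mathrm{iii})\Rightarrow(\mathrm{ii})$ you invoke an \emph{open} neighborhood $V_x$, whereas (iii) only promises a neighborhood; of course replacing $V_x$ by its interior fixes this immediately.
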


From the latter characterization it is straightforward that if $X=G$ is a locally compact group, then $\mathcal{S}(G)$ is closed in $\mathcal{F}(X)$; in this specific case and under countability assumptions, this topology was introduced by Chabauty \cite{Cha}.

\subsection{Dimension of topological spaces}\label{todi}

We recall briefly several notions of dimension. For details see \cite{HW}. Let $X$ be a topological space. If $\mathcal{U}=(U_i)_{i\in I}$ an open covering of $X$, define its {\it degree} as $$\textnormal{deg}(\mathcal{U})=\sup\left\{n:\;\exists J\subset I, \;\#J=n,\;\bigcap_{i\in J} U_i\neq\emptyset\right\},$$
and the {\it topological dimension} (or {\it covering dimension}) $\topdim(X)$ of $X$ is defined as the smallest $n$ such that every open covering of $X$ can be refined to an open covering with degree $\le n+1$ (by convention $\topdim(\emptyset)=-1$). This dimension is well-behaved in many respects, for instance $\topdim(\R^n)=n$.

The inductive dimension $\inddim(X)$ of a topological space $X$ is defined inductively as follows: $\inddim(X)=-1$ if and only if $X=\emptyset$; otherwise $\inddim(X)\le n$ if and only if every $x\in X$ has a basis of closed neighbourhoods $(V_i)$ such that for each $i$ the boundary of $V_i$ has inductive dimension $\le n-1$. 

These dimensions are related. Let us state, for later reference
\begin{prop}\label{toporef}~
\begin{enumerate}
\item\label{ury} By a theorem of Urysohn (see \cite{HW}), if $X$ is a separable metrizable space, then $\inddim(X)=\topdim(X)$. This applies in particular to $X=\mathcal{S}(G)$ when $G$ is a second countable locally compact group.
\item\label{lcdim} By a theorem of Pasynkov \cite{Pas}, if $G$ is a locally compact group, then $\inddim(X)=\topdim(X)$.
\item\label{alek} A theorem of Aleksandrov (see \cite{Isb}) states that if is any compact Hausdorff space then $\topdim(X)\le\inddim(X)$; this is an equality when $X$ is metrizable by (\ref{ury}) but not for general compact Hausdorff spaces \cite{Vop}.
\item\label{invers} The topological dimension of an inverse limit of Hausdorff compact spaces of topological dimension $\le k$ is also $\le k$ \cite{Kak}.
\end{enumerate}
\end{prop}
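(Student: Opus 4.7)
The plan is essentially to assemble four classical facts about dimension theory and to verify that the relevant hypotheses are met in the case of Chabauty spaces; each item can be handled independently.

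For part (\ref{ury}), the goal is to cite Urysohn's identification $\inddim=\topdim$ for separable metrizable spaces, then justify its applicability to $\mathcal{S}(G)$. The needed verification is that $\mathcal{S}(G)$ is separable metrizable whenever $G$ is second countable and locally compact. First I would observe that if $G$ is second countable locally compact, then $G\cup\{\infty\}$ is a compact metrizable space, so its Hausdorff hyperspace is compact metrizable; pulling back via the natural identification sketched in Paragraph \ref{chab} shows $\mathcal{F}(G)$ is compact metrizable. Since $\mathcal{S}(G)\subset\mathcal{F}(G)$ is closed, it inherits compact metrizability, hence separability. Then Urysohn applies.

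For part (\ref{lcdim}), the claim is directly Pasynkov's theorem for locally compact groups \cite{Pas}, so a citation suffices. For part (\ref{alek}), the inequality $\topdim\le\inddim$ for compact Hausdorff spaces is Aleksandrov's classical result, and for the strict inequality in general (i.e.\ the fact that equality can fail outside the metric case), the reference to Vopěnka's example \cite{Vop} closes the matter; the equality in the metric case then reduces to (\ref{ury}) applied to compact metric spaces. For part (\ref{invers}), the statement about inverse limits of compact Hausdorff spaces of topological dimension $\le k$ is a standard theorem due to Kakutani (also attributed in some sources to Freudenthal/Mardešić), and I would only cite \cite{Kak}.

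The main ``obstacle'', though minor, is the metrizability verification for (\ref{ury}); everything else is a straightforward compilation. Note there is nothing to prove about the interaction between these statements: they are listed for later reference, and a subsequent application (e.g.\ to Theorem \ref{covdimi}) will use them in conjunction, at which point we will need to combine (\ref{invers}) with (\ref{alek}) when we work with non-metrizable compact groups, and (\ref{ury}) together with (\ref{lcdim}) in the second countable case.
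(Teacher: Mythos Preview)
Your proposal is correct and matches the paper's treatment: the proposition is a compilation of cited results with no proof given, and you correctly identify that the only verification needed is the metrizability of $\mathcal{S}(G)$ for second countable $G$, which the paper asserts in the introduction (referring to Paragraph~\ref{chab}) rather than proving in detail. One small correction: the reference \cite{Kak} is to Katuta, not Kakutani.
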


\subsection{Subclasses of groups}\label{subcla}

Recall that, for $n\ge 2$, the Pr\"ufer group $\mathbf{C}_{n^\infty}$ is defined as the inductive limit of the groups $\mathbf{C}_{n^k}=\Z/n^k\Z$; in particular $\mathbf{C}_{n^\infty}\simeq\Z[1/n]/\Z$ and is the direct product of $\mathbf{C}_{p^k}$ where $p$ ranges over {\it distinct} prime divisors of $n$. Similarly, the ring $\Z_n$ of $n$-adic numbers is defined as the projective limit of the groups $\Z/n^k\Z$ and is the product of $\Z_p$ where $p$ ranges over {\it distinct} prime divisors of $n$. Also, $\Q_n$ denotes the product of $p$-adic fields $\Q_p$ when $p$ ranges over {\it distinct} divisor of $n$, and we call a group {\it local} if it is isomorphic to a finite direct product of $\Q_p$ (distinct or not).

Recall that a discrete abelian group is {\it artinian} if it satisfies the descending condition on subgroups (no infinite decreasing chain), or equivalently is a finite direct product of Pr\"ufer groups and finite groups. We say here that a locally compact group is {\it adic} if it is a finite direct product of finite groups and groups of the form $\Z_n$ for some (non-fixed) $n$.

We call {\it torus} a group of the form $\R^k/\Z^k$.


\subsection{On Pontryagin duality}\label{opd}

The following tables shows various groups or classes of locally compact abelian groups, in correspondence by Pontryagin duality. See \cite[Chap.~II.2]{Bou} for details.

 \makebox[\linewidth]{
\small
\begin{tabular}{|c|c|}
\hline
$G$ & $G^\vee$\\ 
\hline
$\R$ & $\R$\\ 
\hline
$\Q_p$ & $\Q_p$ \\
\hline
$\Z/n\Z$ & $\Z/n\Z$ \\
\hline
$\Z$ & $\R/\Z$ \\
\hline
$\Z_p$ & $\mathbf{C}_{p^\infty}$ \\
\hline
\end{tabular}
}

\medskip

 \makebox[\linewidth]{
\small
\begin{tabular}{|c|c|}
\hline
{\it class} $\mathcal{C}$ & $\mathcal{C}^\vee$\\ 
\hline
discrete & compact\\ 
\hline
Lie & compactly generated \\
\hline
totally disconnected & elliptic \\
\hline
connected & torsion-free \\
\hline
discrete torsion & profinite \\
\hline
discrete artinian & adic \\
\hline
discrete finitely generated & torus-by-finite \\
\hline
\end{tabular}
}

\subsection{The invariant $R$}\label{invr}

Let $G$ be a locally compact abelian group. Define
$$R(G)=\sup\{k|\;\R^k\text{ is isomorphic to a direct factor of }G\}.$$

The following result is contained in \cite[II.2, Proposition~3(i)]{Bou}
\begin{prop}\label{rcd}
Every locally compact abelian group $G$ is isomorphic to the direct product of $\R^k$ and a compact-by-discrete group.

If $G$ is a compactly generated Lie group, then it isomorphic to $\R^k\times\Z^\ell\times (\R/\Z)^m\times F$ with $F$ finite and $k,\ell,m$ non-negative integers.
\end{prop}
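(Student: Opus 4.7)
The plan is to reduce to the classical principal structure theorem for LCA groups: every such $G$ contains an open subgroup $H$ topologically isomorphic to $\R^k\times K$ for some compact $K$ and integer $k\ge 0$. Granting this, the task becomes to promote $\R^k$ from a direct factor of an open subgroup to a direct factor of all of $G$, and then recognize the complement as compact-by-discrete.

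Let $V\subset H$ denote the closed subgroup corresponding to $\R^k\times\{0\}$, and let $p\colon H\to V$ be the first projection. The key observation is that $V\simeq\R^k$ is a divisible abelian group, hence injective as a $\Z$-module; consequently $p$ extends to an abstract homomorphism $\tilde{p}\colon G\to V$. Since $H$ is open in $G$ and $\tilde{p}|_H=p$ is continuous, $\tilde{p}$ is automatically continuous (a homomorphism out of a topological group is continuous as soon as its restriction to an open subgroup is). Setting $M=\ker(\tilde{p})$, the continuous bijective homomorphism $V\times M\to G$, $(v,m)\mapsto v+m$, is a topological isomorphism by the open mapping theorem for LCA groups. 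Finally, $M\cap H=\{0\}\times K$ is a compact open subgroup of $M$, so $M$ is compact-by-discrete.

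For the second assertion, I would use that a compactly generated LCA Lie group has an open identity component $G_0$ which, being a connected abelian Lie group, is of the form $\R^k\times(\R/\Z)^m$. The discrete quotient $G/G_0$ is finitely generated (since $G$ is compactly generated), hence isomorphic to $\Z^\ell\times F$ with $F$ finite by the structure theorem for finitely generated abelian groups. Because $G_0$ is divisible and $G/G_0$ is discrete, the extension $0\to G_0\to G\to G/G_0\to 0$ admits an algebraic section; openness of $G_0$ then makes this section automatically a continuous homomorphism, yielding the claimed decomposition $G\simeq\R^k\times(\R/\Z)^m\times\Z^\ell\times F$.

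The main obstacle is really bookkeeping between the algebraic and topological categories, since both splittings ultimately rest on the same principle about continuity along open subgroups. The deeper ingredient, namely the existence of the open subgroup $\R^k\times K$, is the heart of Pontryagin--van Kampen theory and would be invoked as a black box; everything else amounts to combining injectivity of $\R^k$ (respectively of $\R^k\times(\R/\Z)^m$) as an abstract abelian group with the automatic-continuity principle.
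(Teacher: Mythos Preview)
The paper does not give its own proof of this proposition; it simply cites \cite[II.2, Proposition~3(i)]{Bou}. Your argument is essentially the standard one found in textbook treatments and is correct in outline.

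One small point: invoking ``the open mapping theorem for LCA groups'' to conclude that $V\times M\to G$ is a homeomorphism is unnecessary and, in full generality, requires a $\sigma$-compactness hypothesis you have not assumed. You do not need it: the inverse map $g\mapsto(\tilde p(g),\,g-\tilde p(g))$ is visibly continuous because $\tilde p$ is. With that adjustment the first paragraph is complete. The second paragraph is fine as written; the section $G/G_0\to G$ is continuous simply because its domain is discrete.
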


From this, we immediately derive the following results.

\begin{lem}~\label{Rop}
\begin{itemize}
\item If $G_1$ is an open subgroup of $G_2$, then $R(G_1)=R(G_2)$.
\item If $K$ is a compact subgroup of $G$ then $R(G/K)=R(G)$.
\item For every locally compact abelian group, $R(G)<\infty$.
\item We have $R(G)=0$ if and only if $G$ is compact-by-discrete.
\end{itemize}
\end{lem}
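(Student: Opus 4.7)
The plan is to derive all four items from Proposition \ref{rcd}, whose uniqueness part will serve as the key input. Write $G=\R^k\times H$ with $H$ compact-by-discrete and $k$ the integer provided by that proposition. The preliminary step is to verify that this $k$ coincides with $R(G)$: certainly $R(G)\ge k$; conversely, if $\R^\ell$ is a direct factor of $G$, then writing $G=\R^\ell\times H'$ and applying Proposition \ref{rcd} to $H'$ yields a second decomposition $G=\R^{\ell+k'}\times H''$ with $H''$ compact-by-discrete, so uniqueness forces $\ell\le\ell+k'=k$. In particular $R(G)=k<\infty$, giving item~3, and $R(G)=0$ iff $k=0$ iff $G$ is compact-by-discrete, giving item~4.

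For item~1, write $G_2=\R^k\times H$ with $H$ compact-by-discrete. Since $G_1$ is open in $G_2$, the quotient $G_2/G_1$ is discrete, so every continuous homomorphism from the connected group $\R^k$ to $G_2/G_1$ is trivial. Hence $\R^k\subset G_1$, which allows us to write $G_1=\R^k\times(G_1\cap H)$. As $G_1\cap H$ is open in $H$, it is still compact-by-discrete, so $R(G_1)=k=R(G_2)$.

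For item~2, again write $G=\R^k\times H$. The second projection $G\to\R^k$ sends $K$ to a compact subgroup of $\R^k$, which must be trivial, so $K\subset\{0\}\times H$ and therefore $G/K\simeq\R^k\times(H/K)$. It remains to see that $H/K$ is compact-by-discrete: if $C$ is a compact open subgroup of $H$, then $CK$ is compact (continuous image of $C\times K$) and open (it contains the open set $C$), so $CK/K$ is a compact open subgroup of $H/K$. Hence $R(G/K)=k=R(G)$.

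The only conceptual step is the preliminary verification $R(G)=k$, for which the uniqueness clause of Proposition \ref{rcd} is essential; after that, items~1 and~2 reduce to the observation that an $\R^k$-factor survives both open-subgroup and compact-quotient operations because $\R^k$ is connected and has no nontrivial compact subgroups, while items~3 and~4 are immediate.
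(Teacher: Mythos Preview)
Your proof is correct and follows exactly the route the paper intends: the paper's own ``proof'' is simply the sentence ``From this, we immediately derive the following results,'' so you are supplying the details the author left implicit, all drawn from Proposition~\ref{rcd}. Two cosmetic remarks: the ``uniqueness part'' you invoke is not stated explicitly in the paper's formulation of Proposition~\ref{rcd} (though it is in the Bourbaki reference and asserted in the introduction), so you might phrase the preliminary step as establishing uniqueness rather than quoting it; and in item~2 you mean the projection $G\to\R^k$, not the ``second'' projection.
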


\subsection{On connected groups}


\begin{lem}\label{ccntp}
If a connected compact locally compact abelian group $G$ is non-trivial, then it contains a nontrivial path.
\end{lem}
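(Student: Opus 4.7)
The plan is to exhibit a nontrivial continuous one-parameter subgroup $\R\to G$; its restriction to $[0,1]$ will give the required nontrivial path. The strategy is to construct the one-parameter subgroup on the dual side (where it becomes a purely algebraic object) and then transport it back via Pontryagin duality.

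First, I would pass to the Pontryagin dual $A=G^\vee$. According to the duality tables in Paragraph \ref{opd}, since $G$ is compact and connected, $A$ is discrete and torsion-free; since $G$ is nontrivial, so is $A$. Pick any nonzero $a\in A$. Because $A$ is torsion-free, $\Z a\simeq\Z$, so the assignment $a\mapsto 1$ defines a homomorphism $\Z a\to\R$. The group $\R$ is divisible, hence injective as an abelian group, so this homomorphism extends to a homomorphism $\phi:A\to\R$ with $\phi(a)=1\neq 0$. Since $A$ is discrete, $\phi$ is automatically continuous.

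Next, I would dualize. Pontryagin duality provides a natural bijection between continuous homomorphisms $A\to\R$ and continuous homomorphisms $\R^\vee\to A^\vee$, i.e.\ $\R\to G$ (using that $\R$ is self-dual). Let $\psi:\R\to G$ be the nonzero continuous homomorphism corresponding to $\phi$. Choose $t_0\in\R$ with $\psi(t_0)\neq 0$; then $s\mapsto\psi(st_0)$ is a continuous map $[0,1]\to G$ with distinct endpoints, hence a nontrivial path in $G$.

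There is no real obstacle here once Pontryagin duality is available: the only two ingredients are the duality correspondence between compact connected groups and torsion-free discrete groups (cited from Paragraph \ref{opd}), and the divisibility of $\R$ which lets one extend the cyclic homomorphism to all of $A$. The mild point worth emphasizing is that the correspondence $\phi\leftrightarrow\psi$ is functorial and nonzero-preserving, so that a nonzero $\phi$ genuinely produces a nonzero one-parameter subgroup of $G$.
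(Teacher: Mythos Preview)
Your proof is correct and follows essentially the same approach as the paper: both pass to the dual, observe that $G^\vee$ is a nontrivial discrete torsion-free abelian group so that $\Hom(G^\vee,\R)\neq 0$, and then apply Pontryagin duality to obtain a nonzero element of $\Hom(\R,G)$, hence a nontrivial path. You have simply spelled out in more detail why $\Hom(G^\vee,\R)\neq 0$ (via injectivity of $\R$) and how the path is extracted from the one-parameter subgroup.
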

\begin{proof}
As $G^\vee$ is a non-trivial discrete torsion-free abelian group (see Paragraph \ref{opd}), we have $\Hom(G^\vee,\R)\neq 0$. By Pontryagin duality, $\Hom(\R,G)$ is non-trivial as well. In particular, $G$ contains non-trivial paths.
\end{proof}

\begin{lem}\label{hnz}
Let $D$ be a torsion-free discrete abelian group and $Q$ a connected compact abelian group. If both $D$ and $Q$ are non-zero, then $\Hom(D,Q)$ is a non-trivial compact connected group.
\end{lem}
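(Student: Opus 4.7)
The plan is to verify separately that $\Hom(D,Q)$ is compact Hausdorff, non-trivial, and connected.

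For the topological setup, I would equip $\Hom(D,Q)$ with the compact-open topology; since $D$ is discrete, this coincides with pointwise convergence, so $\Hom(D,Q)$ is a closed subset of the Tychonoff compact space $Q^D$, hence is compact Hausdorff.

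For non-triviality I would first show that $Q$ is divisible as an abstract group. Since $Q$ is compact connected abelian, its Pontryagin dual $Q^\vee$ is discrete torsion-free (Paragraph \ref{opd}). For any integer $n\ge 1$, the dual of $Q/nQ$ identifies with the $n$-torsion of $Q^\vee$, which is trivial, so $Q/nQ=0$ and $Q$ is divisible. A divisible abelian group is injective in the category of abelian groups, so any partial homomorphism from a subgroup of $D$ into $Q$ extends. Picking any nonzero $d\in D$ (so $\Z d\simeq\Z$ since $D$ is torsion-free) and any nonzero $q\in Q$, the assignment $d\mapsto q$ defines a nonzero homomorphism $\Z d\to Q$ which extends to a nonzero element of $\Hom(D,Q)$.

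For connectedness, I would write $D$ as the directed union of its finitely generated subgroups $D_\alpha$. Each $D_\alpha$ is finitely generated and torsion-free abelian, hence isomorphic to $\Z^{r_\alpha}$, so that $\Hom(D_\alpha,Q)\simeq Q^{r_\alpha}$ is compact and connected (a finite power of the connected space $Q$). The restriction maps make $\Hom(D,Q)\to\varprojlim_\alpha \Hom(D_\alpha,Q)$ into a continuous bijection between compact Hausdorff spaces, hence a homeomorphism. Since an inverse limit of nonempty compact connected Hausdorff spaces is connected (a standard fact), $\Hom(D,Q)$ is connected.

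The main conceptual step is recognizing the divisibility of $Q$ (provided by the Pontryagin dictionary of Paragraph \ref{opd}), which simultaneously powers the extension argument for non-triviality and, via the inverse-limit presentation over finitely generated subgroups, reduces connectedness to the elementary fact that finite powers of $Q$ are connected; no step presents real difficulty.
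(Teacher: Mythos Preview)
Your proof is correct and structurally matches the paper's: both obtain compactness and connectedness from the description of $\Hom(D,Q)$ as the inverse limit of the compact connected groups $\Hom(\Z^k,Q)\simeq Q^k$ over the finitely generated subgroups of $D$. The only real difference is in the non-triviality step: the paper produces a nonzero homomorphism by composing a nonzero $f\in\Hom(D,\R)$ with a nonzero $g\in\Hom(\R,Q)$ (the latter obtained from $\Hom(Q^\vee,\R)\neq 0$ by Pontryagin duality), whereas you argue directly that $Q$ is divisible (from $Q^\vee$ torsion-free) and hence injective in the category of abelian groups, so a nonzero map $\Z d\to Q$ extends to $D$. Your route is slightly more self-contained and avoids the small care needed in the paper to choose $\lambda$ so that $g\circ(\lambda f)\neq 0$; the paper's route, on the other hand, ties in with the one-parameter subgroup viewpoint used just above in Lemma~\ref{ccntp}.
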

\begin{proof}
We have a non-zero element $f$ in $\Hom(D,\R)$ and by Pontryagin duality, we can find a non-zero $g$ in $\Hom(\R,Q)$ as well. By composing $g$ by $\lambda f$ for suitable $\lambda>0$, we get a non-trivial element in $\Hom(D,Q)$. 

As $D$ is a direct limit of groups of the form $\Z^k$, $\Hom(D,Q)$ is a projective limit of groups of the form $\Hom(\Z^k,Q)=Q^k$ which are compact and connected. Therefore $\Hom(D,Q)$ is compact and connected as well.
\end{proof}

\subsection{Dimension of locally compact abelian groups}\label{dimlca}

We deal here with the topological dimension because we need to use Proposition \ref{toporef}(\ref{invers}), but by (\ref{lcdim}) of the same proposition, it coincides for locally compact groups with the inductive dimension.
The following lemma is a particular case of \cite[Theorem~5]{Dix}. Recall that all homomorphisms are assumed to be continuous.

\begin{lem}
If $G$ is a locally compact abelian group, $H$ a closed subgroup and $f:H\to\R$ a homomorphism, then $f$ can be extended to a homomorphism $G\to\R$.\label{rex}
\end{lem}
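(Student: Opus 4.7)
My plan is to prove this by reducing to the compactly generated Lie group case using the structure theorem (Proposition \ref{rcd}), then handling the Lie case directly using linear algebra together with divisibility of $\R$ as an abstract abelian group.

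\textbf{Stage 1 (reduction to a Lie quotient).} Write $G\cong\R^n\times G_1$ with $G_1$ containing a compact open subgroup $K$, viewed as a compact subgroup of $G$. Since $\R$ has no nontrivial compact subgroups, the continuous homomorphism $f:H\to\R$ vanishes on the compact subgroup $H\cap K$, so $f$ factors as $H\to H/(H\cap K)\xrightarrow{\bar f}\R$. Using that $H+K$ is closed in $G$ (sum of a closed set and a compact set) and a standard application of the open mapping theorem, the natural continuous bijection $H/(H\cap K)\to(H+K)/K\subseteq G/K$ is a topological isomorphism, so $\bar f$ becomes a continuous homomorphism defined on a closed subgroup of $G/K\cong\R^n\times D$, with $D=G_1/K$ discrete. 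Any extension $\tilde F:\R^n\times D\to\R$ of $\bar f$ pulls back through $G\to G/K$ to an extension $F:G\to\R$ of $f$.

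\textbf{Stage 2 (the Lie case).} For a closed subgroup $H'\subseteq\R^n\times D$, set $V=H'\cap(\R^n\times\{0\})$, a closed subgroup of $\R^n$ isomorphic to $\R^a\times\Z^b$. There is an exact sequence $0\to V\to H'\to D'\to 0$ with $D'$ a subgroup of $D$. I would first extend $f'|_V$ to a continuous homomorphism $L:\R^n\to\R$ by elementary linear algebra (extend the $\R$-linear part on $\R^a$ by Hahn--Banach, send each generator of the $\Z^b$ part to its prescribed value, then extend $\R$-linearly to a complementary subspace). After replacing $f'$ by $f'-L|_{H'}$, we may assume $f'|_V=0$, so $f'$ descends to a homomorphism $D'\to\R$ between abstract abelian groups. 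Since $\R$ is divisible, Baer's criterion gives that it is injective in the category of abstract abelian groups, so this extends to a homomorphism $D\to\R$; adding back $L$ produces the sought extension on $\R^n\times D$, which is continuous since $D$ is discrete and $L$ is linear.

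\textbf{Main obstacle.} The most technical point is justifying the topological isomorphism $H/(H\cap K)\cong(H+K)/K$ in Stage 1, since the open mapping theorem for topological groups typically requires a $\sigma$-compactness assumption on at least one side. For general (non-$\sigma$-compact) LCA groups, I would address this by a further reduction: write $G$ as a directed union of open compactly generated subgroups $G_\alpha$, extend $f$ consistently on each $H\cdot G_\alpha$ using the Stage 1--Stage 2 argument within $G_\alpha$, and invoke a Zorn's lemma argument on pairs $(H',f')$ with $H\subseteq H'\subseteq G$ closed and $f':H'\to\R$ continuous extending $f$, ordered by extension, to glue the partial extensions together (upper bounds for chains coming from closure of unions plus a local boundedness argument to extend by continuity).
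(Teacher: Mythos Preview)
Your proof is correct, and the overall strategy matches the paper's: both reduce modulo a compact subgroup $K$ to the case $G\cong\R^n\times D$ with $D$ discrete, then combine elementary linear algebra on $\R^n$ with the injectivity of $\R$ as a $\Z$-module. The order of operations in your Stage~2 differs slightly from the paper's: the paper first extends $f$ uniquely to the preimage $H_1$ of the torsion of $G/H$, then to $H_1+G_0$ via a direct factor of $H_1\cap G_0$ inside $G_0=\R^n$, and finally to all of $G$ by injectivity of $\R$; you instead first extend $f'|_V$ across $\R^n$, subtract, and then handle the discrete quotient. Both routes are equally short and use the same ingredients.

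Your ``main obstacle'' is not one, and you should drop the Zorn's-lemma detour. Since $K$ is compact, the quotient map $\pi:G\to G/K$ is proper: if $C\subset G/K$ is compact, cover it by finitely many sets $\pi(U_i)$ with $\overline{U_i}\subset G$ compact (using that $\pi$ is open and $G$ is locally compact), so that $\pi^{-1}(C)$ is a closed subset of the compact set $\bigcup_i\overline{U_i}\,K$. The restriction of a proper map to a closed subset is proper, and a proper continuous map to a locally compact Hausdorff space is closed; hence $\pi|_H:H\to(H+K)/K$ is a closed continuous surjection, i.e.\ a quotient map, and the induced bijection $H/(H\cap K)\to(H+K)/K$ is a homeomorphism with no $\sigma$-compactness hypothesis needed. (The paper's one-line ``working in $G/K$ if necessary'' is implicitly relying on the same fact.)
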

\begin{proof}
Let $K$ be a compact subgroup of $G$ such that $G/K$ is a Lie group whose unit component is isomorphic to $\R^k$. Working in $G/K$ if necessary, we can suppose $K=0$. Let $H_1$ be the inverse image in $G$ of the torsion in $G/H$. Then $f$ has a unique extension to $H_1$; in restriction to $H_1\cap G_0$ this extension is continuous as it essentially consists in extending a homomorphism from $\R^\ell\times\Z^m$ to $\R^{\ell +m}$. So it is continuous. Now there exists a direct factor of $H_1\cap G_0$ in $G_0$, so $f$ extends to $H+G_0$, and finally by injectivity of the $\Z$-module $\R$, we can extend $f$ to all of $G$.
\end{proof}

\begin{lem}The following (possibly infinite) numbers are equal
\begin{itemize}
\item The supremum of $k$ such that there exists a homomorphism $G\to\R^k$ such that the closure of the image is cocompact
\item The supremum of $k$ such that $\Z^k$ embeds discretely into $G$.
\end{itemize}\label{dimgvv} 
\end{lem}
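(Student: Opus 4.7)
The plan is to prove the two suprema are equal by showing each bounds the other, using Lemma \ref{rex} for one direction and the structure of closed cocompact subgroups of $\R^k$ for the other.

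For the easier direction (second $\ge$ first via lifting), suppose $\iota:\Z^k\hookrightarrow G$ is a discrete embedding. I would take the $k$ coordinate projections $\Z^k\to\Z\hookrightarrow\R$; each is continuous on the discrete closed subgroup $\iota(\Z^k)\subset G$, so by Lemma \ref{rex} it extends to a continuous homomorphism $G\to\R$. Collecting these into $\phi:G\to\R^k$ gives a map whose restriction to $\iota(\Z^k)$ is the standard inclusion $\Z^k\hookrightarrow\R^k$. Hence $\Z^k\subseteq\phi(G)\subseteq\overline{\phi(G)}$, so $\R^k/\overline{\phi(G)}$ is a Hausdorff quotient of the torus $\R^k/\Z^k$ and therefore compact.

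For the harder direction (first $\ge$ second), let $\phi:G\to\R^k$ have cocompact closure. Since every closed subgroup of $\R^k$ is conjugate under $\GL_k(\R)$ to $\R^a\times\Z^b$ for some $a,b\ge 0$, and cocompactness forces $a+b=k$, I may assume after a linear change of coordinates that $\overline{\phi(G)}=\R^a\times\Z^b$ inside $\R^k=\R^a\times\R^b$. Write $\chi=\mathrm{pr}_1\circ\phi$ and $\psi=\mathrm{pr}_2\circ\phi$. Density of $\phi(G)$ in $\R^a\times\Z^b$ yields two consequences: $\psi(G)=\Z^b$ (a dense subgroup of the discrete group $\Z^b$), and $\chi(\ker\psi)$ is dense in $\R^a$ (approximating points of $\R^a\times\{0\}$ sufficiently closely forces the second coordinate into the open neighbourhood $\{0\}$ of $\Z^b$).

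Now I would lift a basis of $\Z^b$ to elements $h_1,\dots,h_b\in G$, and, using density, pick $g_1,\dots,g_a\in\ker\psi$ with $\chi(g_1),\dots,\chi(g_a)$ linearly independent over $\R$. The $k$ vectors $\phi(g_i)=(\chi(g_i),0)$ and $\phi(h_j)=(\chi(h_j),e_j)$ form an $\R$-basis of $\R^k$ (the first $a$ span $\R^a\times 0$, and each subsequent $\phi(h_j)$ contributes a new coordinate direction $e_j$). Therefore the map $\Z^k\to G$ sending the standard basis to $(g_1,\dots,g_a,h_1,\dots,h_b)$ has $\phi$-image equal to the lattice generated by these $k$ $\R$-independent vectors, hence injective; let $L$ be its image. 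The key point is to check that $L$ is discrete in $G$: $\phi|_L:L\to\phi(L)$ is a group isomorphism onto a lattice of $\R^k$, so picking an open $V\subset\R^k$ with $V\cap\phi(L)=\{0\}$, the open set $\phi^{-1}(V)\subset G$ meets $L$ only in $0$. Thus $L\cong\Z^k$ embeds discretely in $G$.

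The main obstacle is the second direction: given only the abstract datum that $\phi(G)$ has cocompact closure, one must extract an actual discrete $\Z^k$ inside $G$, not merely inside $\overline{\phi(G)}$. The trick is to normalize via $\GL_k(\R)$ so that the discrete part of $\overline{\phi(G)}$ is already attained by $\phi(G)$ itself (giving the $h_j$), and to use the density argument on $\ker\psi$ to produce independent elements for the $\R^a$-factor (giving the $g_i$); linear independence of the $\phi$-images then ensures both freeness and discreteness of the subgroup they generate.
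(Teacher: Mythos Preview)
Your proof is correct and follows essentially the same approach as the paper: extend coordinate maps via Lemma \ref{rex} in one direction, and in the other lift a basis of $\R^k$ from $\phi(G)$ to obtain a discrete copy of $\Z^k$. You are more explicit than the paper in extracting the basis (splitting $\overline{\phi(G)}=\R^a\times\Z^b$ and treating the factors separately), whereas the paper simply notes that since $\overline{\phi(G)}$ spans $\R^k$ so does $\phi(G)$, hence it contains a basis whose lift generates the desired lattice; note incidentally that your labels ``second $\ge$ first'' and ``first $\ge$ second'' are swapped relative to the inequalities you actually establish.
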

\begin{proof}
Suppose that $G\to\R^k$ is a homomorphism whose image has cocompact closure. Then the image contains a basis, hence a lattice, which lifts to a discrete subgroup of $G$ isomorphic to $\Z^k$.

Suppose that $\Z^k$ embeds discretely into $G$. Consider the embedding of $\Z^k$ in $\R^k$ as a lattice. By Lemma \ref{rex}, this can be extended to a homomorphism $G\to\R^k$ whose image has cocompact closure.
\end{proof}

\begin{lem}The following (possibly infinite) numbers are equal
\begin{itemize}
\item The topological dimension $\topdim(G)$;
\item The supremum of $k$ such that there exists a homomorphism $\R^k\to G$ with discrete kernel;
\item The supremum of $k$ such that $(\R/\Z)^k$ is a quotient of $G$.
\end{itemize}\label{covg} 
\end{lem}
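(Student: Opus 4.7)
Denote the three quantities $d_1=\topdim(G)$, $d_2$, and $d_3$. The plan is: establish $d_2=d_3$ by Pontryagin duality and Lemma \ref{dimgvv}; then identify both $d_1$ and $d_3$ with $a+r$, where $G=\R^a\times M$ is the structural decomposition of Proposition \ref{rcd}, $C\le M$ is a compact open subgroup, and $r=\mathrm{rank}_\Q(C^\vee)$.

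For $d_2=d_3$: a continuous homomorphism $\phi:\R^k\to G$ dualises to $\phi^\vee:G^\vee\to\R^k$ (using that $\R$ is Pontryagin self-dual), and the orthogonality identity yields $\ker\phi=(\mathrm{im}\,\phi^\vee)^{\curlywedge}$ inside $\R^k$. A closed subgroup of $\R^k$ is discrete if and only if its annihilator is cocompact, so $\ker\phi$ is discrete iff $\mathrm{im}(\phi^\vee)$ has cocompact closure in $\R^k$. By Lemma \ref{dimgvv} applied to $G^\vee$, $d_2(G)$ then equals the supremum of $k$ such that $\Z^k$ embeds discretely in $G^\vee$. Separately, the closed-subgroup/quotient side of Pontryagin duality matches surjections $G\twoheadrightarrow(\R/\Z)^k$ with closed subgroups of $G^\vee$ topologically isomorphic to $\Z^k$, i.e.\ discrete free abelian subgroups of rank $k$ (discrete subgroups of LC groups are automatically closed). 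Hence $d_3(G)$ equals the same supremum, giving $d_2=d_3$.

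For $d_1=a+r$: by Proposition \ref{toporef}(\ref{lcdim}) and homogeneity of inductive dimension for topological groups, $\topdim(G)=\topdim(\R^a\times C)=a+\topdim(C)$, the last step being the product formula with a compact factor. Writing $C^\vee$ as the directed union of its finitely generated subgroups $D_i\cong\Z^{r_i}\oplus(\text{finite})$ with $r_i\le r$, and dualising, presents $C$ as an inverse limit of compact Lie groups of the form $(\R/\Z)^{r_i}\times F_i$; Proposition \ref{toporef}(\ref{invers}) then gives $\topdim(C)\le r$. The reverse inequality follows from monotonicity of $\topdim$ under the compact Lie quotients $C\twoheadrightarrow(\R/\Z)^{r_i}\times F_i$ (which admit local continuous sections by Lie theory), giving $\topdim(C)\ge r_i$ and hence $\topdim(C)\ge r$. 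Thus $d_1=a+r$.

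For $d_3=a+r$: using the identification of $d_3$ with the supremum of $k$ such that $\Z^k\hookrightarrow G^\vee=\R^a\times M^\vee$ discretely, the upper bound $d_3\le a+r$ comes from projecting any such $\Z^k$ to $\R^a$: the image is discrete of rank $a'\le a$, and the kernel $\Z^{k-a'}$ embeds discretely into $M^\vee$; intersecting with the compact open subgroup $(M/C)^\vee\le M^\vee$ yields a finite subgroup, so $\Z^{k-a'}$ modulo finite injects into $C^\vee$, giving $k-a'\le r$. The lower bound $a+r\le d_3$ combines the standard lattice $\Z^a\subset\R^a$ with a lift of a rank-$r$ free subgroup $\Z^r\le C^\vee$ along $M^\vee\twoheadrightarrow C^\vee$ (possible by projectivity of $\Z^r$); this lift is discrete because the kernel $(M/C)^\vee$ is compact \emph{and} open in $M^\vee$. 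The main obstacle lies in the dimension-theoretic groundwork—the product formula for $\topdim(\R^a\times C)$, the inverse-limit inequality, and the local-section argument for Lie quotients—which must be handled cleanly in the general (not necessarily second countable) LCA setting, using Pasynkov's theorem and Proposition \ref{toporef}(\ref{invers}).
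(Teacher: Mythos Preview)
Your reduction $d_2=d_3$ via Pontryagin duality and Lemma~\ref{dimgvv} is correct and is exactly what the paper does. The rest of your argument, routing both $d_1$ and $d_3$ through the explicit quantity $a+r$, is more elaborate than the paper's approach but would work---except for one genuine gap.

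In your upper bound $d_3\le a+r$, you project a discrete copy of $\Z^k\subset\R^a\times M^\vee$ to the factor $\R^a$ and assert that ``the image is discrete of rank $a'\le a$''. This is false: take $a=1$, $M=\R/\Z$ (so $M^\vee=\Z$), and let $\Z^2\subset\R\times\Z$ be generated by $(1,0)$ and $(\sqrt{2},1)$. This subgroup is discrete, but its projection to $\R$ is $\{m+n\sqrt{2}:m,n\in\Z\}$, which is dense. Thus neither the discreteness of the image nor the bound $a'\le a$ holds, and the rest of that paragraph collapses. The fix is to intersect first with $\R^a\times K$, where $K=(M/C)^\vee$ is the compact open subgroup of $M^\vee$: since $K$ is compact, the projection $\R^a\times K\to\R^a$ is proper, so $\Z^k\cap(\R^a\times K)$ does project to a discrete subgroup of $\R^a$ (rank $\le a$), and the quotient $\Z^k/(\Z^k\cap(\R^a\times K))$ embeds in $M^\vee/K\cong C^\vee$, giving $k-a\le r$.

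For comparison, the paper avoids this detour entirely. After $d_2=d_3$, it observes directly that $d_1\ge d_2$ (a homomorphism $\R^k\to G$ with discrete kernel forces $\topdim(G)\ge k$), and for $d_1\le d_3$ it passes to an open subgroup to reduce to $G=\R^\ell\times K$ with $K$ compact, then writes $K$ as an inverse limit of compact Lie quotients $K_i$ and applies Proposition~\ref{toporef}(\ref{invers}). This is essentially your $d_1=a+r$ computation, but used as a one-sided inequality and without ever needing to compute $d_3$ in structural terms.
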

\begin{proof}
Denote by $a,b,c$ the corresponding numbers. By Lemma \ref{dimgvv} and Pontryagin duality, $b=c$. Clearly $a\ge b$.

Let us prove $a\le c$. The three numbers are invariant if we replace $G$ by an open subgroup, so we suppose $G/G_0$ compact. Therefore we can write $G=\R^\ell\times K$ with $K$ compact, and we write $K$ as a filtering projective limit of compact Lie groups $K_i$. If $c=c(G)=k+\ell<\infty$, then $c(K_i)\le k$ for all $i$, so $K_i$ is a compact Lie group of dimension at most $k$, hence of topological dimension at most $k$. By Proposition \ref{toporef}(\ref{invers}), $K$ has dimension at most $k$, so $G$ has dimension at most $k+\ell$. 
\end{proof}

By Pontryagin duality again, we get

\begin{cor}\label{dimgvc}
The numbers in Lemma \ref{dimgvv} are also equal to the topological dimension of $G^\vee$.
\end{cor}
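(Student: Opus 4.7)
The plan is to apply Lemma \ref{covg} directly to the dual group $G^\vee$ and then translate the resulting characterization through Pontryagin duality so it matches a characterization from Lemma \ref{dimgvv}.

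More precisely, I would first apply the already-proved Lemma \ref{covg} with $G$ replaced by $G^\vee$, which yields
\[
\topdim(G^\vee) \;=\; \sup\bigl\{k : (\R/\Z)^k \text{ is a quotient of } G^\vee\bigr\}.
\]
The next step is to observe that a continuous surjective homomorphism $G^\vee \twoheadrightarrow (\R/\Z)^k$ is, under Pontryagin duality and the identification $G^{\vee\vee}\simeq G$, the same datum as an injective continuous homomorphism with closed image $\Z^k = ((\R/\Z)^k)^\vee \hookrightarrow G$ (using that the dual of a surjection with compact quotient is an injection onto a closed subgroup, as recalled in the Pontryagin-duality dictionary of Paragraph \ref{opd} and in \cite[Chap.~II.2]{Bou}). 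Since $\Z^k$ is discrete, an injective continuous map $\Z^k\to G$ with closed image is exactly a discrete embedding.

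Combining these two steps gives
\[
\topdim(G^\vee) \;=\; \sup\bigl\{k : \Z^k \text{ embeds discretely into } G\bigr\},
\]
which is precisely the second of the two quantities shown to be equal in Lemma \ref{dimgvv}. This is the required identity.

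I do not anticipate any serious obstacle: the only nontrivial input beyond the previous two lemmas is the duality between closed subgroups and Hausdorff quotients for locally compact abelian groups, which is the content of the cited Bourbaki result. In particular there is nothing to check on the topology-of-uniform-convergence side beyond standard Pontryagin duality, since we are only matching $\sup$'s over the integer $k$.
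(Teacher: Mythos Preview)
Your proof is correct and is exactly the argument the paper has in mind: the paper's proof is the single line ``By Pontryagin duality again, we get'', and you have simply spelled out what that means by applying Lemma~\ref{covg} to $G^\vee$ and then using the standard duality between closed subgroups and Hausdorff quotients to match the third item of Lemma~\ref{covg} (for $G^\vee$) with the second item of Lemma~\ref{dimgvv} (for $G$).
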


We will also need the following lemma.

\begin{lem}\label{ds}
Let $G$ be a locally compact abelian group and $k=R(G)$. There exist abstract sets $I,J$, a compact subgroup $K$ in $G$ and an open subgroup $H$ containing $K$, such that
\begin{itemize}\item $H/K$ is isomorphic to $\Z^{(I)}\times (\R/\Z)^J\times\R^k$, \item $\dim(G)=\dim(H/K)=k+\# I$, and\item $\dim(G^\vee)=\dim((H/L)^\vee)=k+\# J$.\end{itemize} (Here the cardinal of an infinite set is just $\infty$.)
\end{lem}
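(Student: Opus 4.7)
The plan is to split off the $\R^k$ factor using Proposition \ref{rcd}, then to extract the sets $I$ and $J$ from suitable discrete pieces on the two sides of Pontryagin duality, and finally to assemble $H$ and $K$ and verify the dimension formulas.

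First I would apply Proposition \ref{rcd} to write $G\cong\R^k\times M$, with $M$ a compact-by-discrete abelian group; fix a compact open subgroup $C\subseteq M$ and let $D:=M/C$, a discrete abelian group.

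Next I would select the index sets: take $\Z^{(I)}\subseteq D$ to be a maximal free abelian subgroup (so that $D/\Z^{(I)}$ is torsion), setting $\#I:=\infty$ when this rank is infinite and noting that by Lemma \ref{dimgvv} this cardinality equals $\dim(D^\vee)$; symmetrically, pick $\Z^{(J)}\subseteq C^\vee$ with $C^\vee/\Z^{(J)}$ torsion, which dualizes to a continuous surjection $C\twoheadrightarrow(\R/\Z)^J$ with compact kernel $K$. Then set $H:=\R^k\times H_0$, where $H_0\subseteq M$ is the preimage of $\Z^{(I)}$: because $\Z^{(I)}$ is free and discrete, the extension $0\to C\to H_0\to \Z^{(I)}\to 0$ admits a set-theoretic section that is automatically a continuous homomorphism (the target being discrete), so it splits topologically and $H_0\cong C\times \Z^{(I)}$, yielding
\[ H/K \;\cong\; \R^k\times (C/K)\times \Z^{(I)}\;\cong\; \R^k\times (\R/\Z)^J\times \Z^{(I)}. \]

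To verify the dimension formulas I would combine Lemma \ref{covg}, Corollary \ref{dimgvc}, and Pontryagin duality: on the one hand $\dim(G)=k+\dim(C)$ equals $k+\#J$ via $\dim(C)=$ torsion-free rank of $C^\vee$, and on the other hand $\dim(G^\vee)=k+\dim(M^\vee)=k+\dim(D^\vee)=k+\#I$, using that $M^\vee$ has compact open subgroup $D^\vee=C^\curlywedge$ with discrete quotient $C^\vee$. The one point requiring care will be the topological splitting of $H_0$, but this is immediate from the discreteness of the target $\Z^{(I)}$; everything else is bookkeeping with Pontryagin duality applied to $C$ and to $D$.
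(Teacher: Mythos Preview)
Your proof is correct and follows the same plan as the paper's: split off $\R^k$, fix a compact open subgroup, and define $I$, $J$ via maximal free subgroups on the discrete quotient and on the Pontryagin dual of the compact part. The only substantive variation is the splitting step: the paper first forms $H/K$ and then splits off the open subgroup $M/K\cong(\R/\Z)^J$ using that it is connected, hence divisible, whereas you split the extension $0\to C\to H_0\to\Z^{(I)}\to 0$ before quotienting, using that $\Z^{(I)}$ is free abelian; these are dual arguments (injectivity of the torus versus projectivity of the free abelian group) and either one works.

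One wording issue: saying that a ``set-theoretic section is automatically a continuous homomorphism'' is not quite right. What you want is that $\Z^{(I)}$ is free abelian, so a \emph{group-theoretic} section exists, and that $\Z^{(I)}$ carries the discrete topology, so this section is automatically continuous; then the resulting bijection $C\times\Z^{(I)}\to H_0$ is a homeomorphism because $C$ is open. (Incidentally, your computations yield $\dim(G)=k+\#J$ and $\dim(G^\vee)=k+\#I$, which is what the structure of $H/K$ actually gives; the displayed statement of the lemma has these labels swapped.)
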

\begin{proof}
In view of Proposition \ref{rcd}, we can suppose $G$ has a compact open subgroup $M$. Consider a maximal free subgroup $H/M$ in the discrete group $G/M$. Obviously $\dim(G)=\dim(H)$, and since $G/M$ is torsion, it follows from Corollary \ref{dimgvc} that $\dim(G^\vee)=\dim(H^\vee)$. Apply this to find, by duality, a closed group $K$ of $M$ such that $(M/K)^\vee$ is maximal free abelian, so $\dim(H)=\dim(H/K)$ and $\dim(H^\vee)=\dim((H/K)^\vee)$. Now $M/K$ is connected, hence divisible, and open in $H/K$, so has a direct factor. So $H/K$ is isomorphic to $\Z^{(I)}\times (\R/\Z)^J$ for some sets $I,J$.
\end{proof}

\section{Pontryagin-Chabauty duality}

Theorem \ref{pc} is stated in \cite{PTd}, and the short and elementary proof given therein consists in a reduction to the case of a Euclidean space, but the latter case is considered there as ``easily verified". Also, continuity of the orthogonal map in $\mathcal{S}(\R^d)$ is asserted in \cite[Section~2.4]{Klo}. We here give a detailed proof of this not-so-obvious fact.
 
\begin{thm}\label{pce}
Let $V$ be a finite-dimensional real vector space. The orthogonal map $\mathcal{S}(V)\to\mathcal{S}(V^\vee)$, is a homeomorphism.
\end{thm}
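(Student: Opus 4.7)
The orthogonal map $\Phi\colon\mathcal{S}(V)\to\mathcal{S}(V^\vee)$ is a bijection by Pontryagin duality, and both spaces are compact Hausdorff, so it suffices to prove $\Phi$ is continuous; the symmetric statement applied to $V^\vee$ (using $V^{\vee\vee}=V$) will then make the inverse continuous as well, so $\Phi$ will be a homeomorphism. Equivalently, I would show that the graph of $\Phi$ is closed: assume $H_i\to H$ in $\mathcal{S}(V)$ and, by compactness of $\mathcal{S}(V^\vee)$, pass to a subnet along which $H_i^\curlywedge\to K$; the goal is $K=H^\curlywedge$. The inclusion $K\subset H^\curlywedge$ is immediate from the characterization of Chabauty convergence in the lemma of Paragraph \ref{chab}: given $\phi\in K$, pick $\phi_i\in H_i^\curlywedge$ with $\phi_i\to\phi$, and given $h\in H$, pick $h_i\in H_i$ with $h_i\to h$; since $\phi_i(h_i)=0$ in $\R/\Z$ for every $i$, continuity of the evaluation pairing $V\times V^\vee\to\R/\Z$ forces $\phi(h)=0$.

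\textbf{The hard inclusion $H^\curlywedge\subset K$.} The substantive content is to approximate every $\phi\in H^\curlywedge$ by a net $\phi_i\in H_i^\curlywedge$. I would invoke the structure theorem for closed subgroups of $V\cong\R^d$: $H=W\oplus L$, with $W$ the maximal vector subspace contained in $H$ and $L$ a discrete subgroup of some complement. Choosing a splitting $V=W\oplus\textnormal{span}(L)\oplus W''$ and the dual splitting $V^\vee=W^*\oplus\textnormal{span}(L)^*\oplus(W'')^*$, one computes that $H^\curlywedge=\{0\}\oplus L^\#\oplus(W'')^*$, where $L^\#\subset\textnormal{span}(L)^*$ is the dual lattice of $L$. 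Decomposing $\phi$ into its lattice part and its vectorial part along this splitting, I would approximate each separately: the vectorial part via the continuity of orthogonal-complement formation on Grassmannians (applied to appropriate vectorial pieces of the $H_i$), and the lattice part via the continuity of the dual-lattice map on spaces of lattices. The required generating data for $H_i$ are extracted from the Chabauty convergence $H_i\to H$.

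\textbf{Main obstacle.} The central difficulty is that the invariants $(\dim W_H,\textnormal{rk}\,L_H)$ of a closed subgroup $H\subset V$ are not continuous in the Chabauty topology: a sequence of lattices can converge to a vector subspace and vice versa, e.g.\ $\tfrac{1}{n}\Z\to\R$ in $\mathcal{S}(\R)$ has invariants jumping from $(0,1)$ to $(1,0)$. The natural adapted decompositions of the $H_i$ therefore cannot be chosen to depend continuously on $i$, and one must track how generators combine in the limit: vectorial directions of $H$ often arise as accumulation points of discrete directions of the $H_i$, and the dual-lattice data of $H^\curlywedge$ must be reconstructed from the dual data of the $H_i^\curlywedge$ across stratum boundaries. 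I would address this by partitioning $\mathcal{S}(V)$ into the strata on which $(\dim W,\textnormal{rk}\,L)$ is constant (the stratification considered by Kloeckner \cite{Klo}), reducing by passage to a subnet to the case where all $H_i$ lie in a single stratum, and then performing the matching carefully at each possible stratum-boundary configuration. This delicate matching is the only technically demanding step and is the part that the author's detailed proof is designed to establish.
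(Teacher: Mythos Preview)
Your setup is correct and matches the paper: bijection, compactness, reduction to continuity of $\Phi$, and the easy inclusion $K\subset H^\curlywedge$ via the evaluation pairing. The gap is in the hard inclusion. The two tools you name---continuity of the orthogonal complement on Grassmannians and continuity of the dual-lattice map on spaces of lattices---only operate \emph{within} a fixed stratum, and you acknowledge that the cross-stratum matching is the real work while giving no mechanism to perform it. Take the prototypical boundary case: all $H_i$ are full-rank lattices in $\R^d$ converging to $H=\R^k\times\{0\}^{d-k}$. There is no vectorial piece of $H_i$ to feed into a Grassmannian argument, and the dual lattices $H_i^\curlywedge$ are again full-rank lattices while $H^\curlywedge=\{0\}^k\times\R^{d-k}$ is a vector subspace; neither of your tools explains why $H_i^\curlywedge$ becomes dense in the last $d-k$ coordinates. ``Matching carefully at each stratum-boundary configuration'' is a restatement of the problem, not a solution.

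The paper's proof supplies the two ingredients you are missing. First, a $\GL(V)$-rigidification (Lemma~\ref{matrices}): after conjugating by automorphisms $A_n\to 1$ one arranges that every $H_n$ already contains the fixed lattice $\Z^{k+\ell}\times\{0\}^m$ of $H=\R^k\times\Z^\ell\times\{0\}^m$; this forces the projection of $H_n$ to the last $m$ coordinates to be a discrete group $\Gamma_n$ whose shortest vector tends to infinity, so that $H_n^\curlywedge\supset\{0\}^k\times\Z^\ell\times\Gamma_n^\curlywedge$. Second---and this is the genuinely nontrivial analytic input---the paper invokes Mahler's transference theorem from the geometry of numbers (Proposition~\ref{dide}): a discrete subgroup of $\R^m$ with shortest vector $\ge R$ has dual that is $C_m/R$-dense. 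This is precisely what converts ``$\Gamma_n$ has large systole'' into ``$\Gamma_n^\curlywedge\to\R^m$'', and it (or an equivalent) is the idea your outline would need but does not supply.
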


We need a few preliminary results.

\begin{lem}\label{matrices}
Let $W$ be a closed subgroup of $V$ and $\Gamma$ a lattice in $W$. Let $(W_n)$ be a sequence of closed subgroups of $V$ such that $W_n\to W$. Then there exists $A_n\in\GL(V)$ with $A_n\to 1$ and $\Gamma\subset A_nW_n$.
\end{lem}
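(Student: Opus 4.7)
The plan is to exploit the fact that $\Gamma$, being a lattice in the closed subgroup $W$ of a finite-dimensional real vector space $V$, is a finitely generated free abelian group admitting a $\Z$-basis which is simultaneously $\R$-linearly independent in $V$. To justify this, I would first decompose $W=V'\oplus\Lambda$, where $V'\subset V$ is the connected component of $W$ (a linear subspace) and $\Lambda$ is a discrete subgroup; thus $W\simeq\R^a\times\Z^b$ with $a=\dim V'$ and $b=\mathrm{rank}(\Lambda)$. The lattice $\Gamma$ in $W$ then satisfies that $\Gamma\cap V'$ is a lattice in $V'$ and that $\Gamma$ projects onto a finite-index subgroup of $\Lambda$; since $\Gamma$ is torsion-free (being discrete in $V$), it is free abelian of rank $s:=a+b$. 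Its $\R$-linear span in $V$ equals $V'\oplus\mathrm{span}_\R(\Lambda)$, of real dimension $s$, so any $\Z$-basis $\gamma_1,\dots,\gamma_s$ of $\Gamma$ is automatically an $\R$-basis of $\mathrm{span}_\R(W)\subset V$.

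Next I would apply the pointwise characterization of Chabauty convergence (the third bullet of the lemma in Paragraph \ref{chab}) to each $\gamma_i$: for every open neighbourhood $U$ of $\gamma_i$, we eventually have $W_n\cap U\neq\emptyset$. A standard countable-basis diagonal argument then yields $w_{i,n}\in W_n$ with $w_{i,n}\to\gamma_i$ for each $i=1,\dots,s$. Because the $\gamma_i$ are $\R$-linearly independent, continuity of the determinant ensures that $(w_{1,n},\dots,w_{s,n})$ is also $\R$-linearly independent for all sufficiently large $n$.

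I would then complete $(\gamma_1,\dots,\gamma_s)$ to an $\R$-basis $(\gamma_1,\dots,\gamma_s,e_{s+1},\dots,e_d)$ of $V$; for $n$ large, the family $(w_{1,n},\dots,w_{s,n},e_{s+1},\dots,e_d)$ is also a basis of $V$, so I can define $A_n\in\GL(V)$ by $A_nw_{i,n}=\gamma_i$ for $i\le s$ and $A_ne_j=e_j$ for $j>s$. Reading $A_n$ off in the fixed basis $(\gamma_i,e_j)$ shows $A_n\to 1$ in $\GL(V)$. Since $W_n$ is a subgroup and $A_n^{-1}\gamma_i=w_{i,n}\in W_n$, one obtains $A_n^{-1}\Gamma=\sum_i\Z w_{i,n}\subset W_n$, i.e.\ $\Gamma\subset A_nW_n$, as required. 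The only genuinely non-trivial ingredient is the opening structural remark producing an $\R$-linearly independent $\Z$-generating set of $\Gamma$: without it, integer relations among a generating set could obstruct turning the pointwise Chabauty approximation into an element of $\GL(V)$ close to the identity. Everything afterwards is elementary linear algebra.
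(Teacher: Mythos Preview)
Your proof is correct and follows essentially the same route as the paper's: the paper simply normalizes coordinates so that $V=\R^d$, $W=\R^k\times\Z^\ell\times\{0\}$ and $\Gamma=\Z^{k+\ell}\times\{0\}$ (which is exactly your structural claim that a $\Z$-basis of $\Gamma$ is $\R$-linearly independent), then picks $e_i^{(n)}\in W_n$ with $e_i^{(n)}\to e_i$ and takes $A_n$ to be the inverse of the linear map sending $e_i\mapsto e_i^{(n)}$. Your coordinate-free presentation and explicit justification of the rank/independence of $\Gamma$ are more detailed than the paper's, but the argument is the same.
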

\begin{proof}
We can suppose $V=\R^d$ with canonical basis $(e_1,\dots)$, $W=\R^k\times\Z^\ell\times\{0\}^{d-k-\ell}$, $\Gamma=\Z^{k+\ell}\times\{0\}^{d-k-\ell}$.

We can pick, for $i\le k+\ell$, $e_i^{(n)}\in W_n$ so that $e_i^{(n)}\to e_i$ for all $i$. Consider the linear map $B_n$ mapping each $e_i$ to $e_i^{(n)}$ (agree $e_i^{(n)}=e_i$ for $i>k+\ell$). Then $B_n\to 1$, so eventually $B_n$ has an inverse $A_n$, and $\Gamma\subset A_nW_n$.
\end{proof}

When $V$ is a finite-dimensional real vector space, there is a canonical identification between $V^\vee$ and the dual $V^*$. Then if $W$ a closed subgroup of $V$, $W^\curlywedge$ corresponds under this identification to $$\{L\in V^*|\forall v\in W,\langle v,L\rangle\in\Z\},$$
which is known, when $W$ is a lattice, as the dual lattice. If $A\in\GL(V)$, then it is immediate that $(AW)^\curlywedge=A^tW^\curlywedge$, where $A^t\in\GL(V^*)$ is the adjoint map of $A$.

\begin{lem}
Let $W$ be a closed subgroup of $V$ and $\Gamma$ a lattice in $W$. Let $W_n$ be a sequence of closed subgroups of $V$ such that $W_n\to W$. Suppose that $W_n$ eventually contains $\Gamma$. Then $W_n\cap W^0\to W^0$ (the unit component of $W$).
\end{lem}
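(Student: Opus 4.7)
The plan is to reduce to a problem on a compact torus by quotienting by a lattice $\Gamma_0 \subseteq W^0$ (automatically contained in $W_n$), and then to exploit the discreteness of the transverse direction in $W$ via a doubling argument on a proper projection.

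Concretely, I would set $\Gamma_0 := \Gamma \cap W^0$. Since $\Gamma$ is a lattice in $W$ and the image of $\Gamma$ in the discrete group $W/W^0$ is cocompact, hence of finite index, the kernel $\Gamma_0$ is a lattice in $W^0$. After a linear change of coordinates I may assume $V = \R^d$, $W^0 = \R^k \times \{0\}^{d-k}$, $W = \R^k \times \Z^\ell \times \{0\}^{d-k-\ell}$, and $\Gamma_0 = \Z^k \times \{0\}^{d-k}$. Because $\Gamma_0 \subseteq \Gamma \subseteq W_n$ eventually and $\Gamma_0 \subseteq W$, both $W_n$ and $W$ are $\Gamma_0$-saturated, so Chabauty convergence in $V$ is equivalent to Chabauty convergence of the images in the covering $V/\Gamma_0 \cong (\R/\Z)^k \times \R^{d-k}$. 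Writing $\tilde H := H/\Gamma_0$, I then have $\tilde W_n \to \tilde W = (\R/\Z)^k \times \Z^\ell \times \{0\}$, with the identity component $\tilde W^0 = (\R/\Z)^k \times \{0\}$ now compact.

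The key step is to study the second projection $p : (\R/\Z)^k \times \R^{d-k} \to \R^{d-k}$, which is proper because its fibres are the compact torus $(\R/\Z)^k$; in particular $p(\tilde W_n)$ is a closed subgroup of $\R^{d-k}$. Choose $0 < r < 1$ small enough that $\{x \in \R^{d-k} : \|x\| \le r\}$ meets $\Z^\ell \times \{0\}^{d-k-\ell}$ only at $0$. Restricting the Chabauty convergence $\tilde W_n \to \tilde W$ to the compact set $(\R/\Z)^k \times \{\|x\| \le r\}$ gives Hausdorff convergence to $(\R/\Z)^k \times \{0\}$, and projecting by $p$ shows that $p(\tilde W_n) \cap \{\|x\| \le r\}$ Hausdorff-converges to $\{0\}$. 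Since $p(\tilde W_n)$ is a subgroup, a doubling argument now applies: once $p(\tilde W_n) \cap \{\|x\| \le r\} \subseteq \{\|x\| \le \epsilon\}$ with $\epsilon < r/2$, any $x$ in this intersection has $2^j x \in \{\|x\| \le r\}$ for all $j \ge 0$, forcing $\|x\| \le \epsilon/2^j \to 0$ and hence $x = 0$. Thus $p(\tilde W_n) \cap \{\|x\| \le r\} = \{0\}$ for all large $n$.

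The conclusion then follows quickly. For any $t \in (\R/\Z)^k$, since $(t,0) \in \tilde W^0 \subseteq \tilde W$, the convergence $\tilde W_n \to \tilde W$ produces $(t_n, y_n) \in \tilde W_n$ with $(t_n, y_n) \to (t, 0)$; then $y_n \in p(\tilde W_n)$ and $y_n \to 0$, so eventually $y_n = 0$, placing $(t_n, 0) \in \tilde W_n \cap \tilde W^0$ with $(t_n, 0) \to (t, 0)$. Since $\tilde W_n \cap \tilde W^0 \subseteq \tilde W^0$ automatically, this proves $\tilde W_n \cap \tilde W^0 \to \tilde W^0$, which lifts through the covering $V \to V/\Gamma_0$ to $W_n \cap W^0 \to W^0$. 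The main obstacle---ruling out that points of $W_n$ approach $W^0$ transversally without any nearby points of $W_n \cap W^0$---is resolved precisely by the quotient by $\Gamma_0$: it compactifies the direction along $W^0$ so that the transverse projection $p$ is proper, and then the discreteness of $W/W^0$ propagates to $p(\tilde W_n)$ in the Chabauty limit.
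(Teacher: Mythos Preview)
Your proof is correct and takes a genuinely different route from the paper's. The paper argues directly in $V$: given $w\in W^0$ and approximants $w_n\in W_n$ with $w_n\to w$, it supposes $w_n\notin W^0$ infinitely often, multiplies by an integer $m_n$ so that $d(m_nw_n,W^0)=m_n\,d(w_n,W^0)$ lands in the gap $[R/3,2R/3]$ (where $R=d(W^0,W\setminus W^0)$), then translates by an element of $\Gamma$ to force boundedness, and extracts a cluster point that must lie in $W$ by Chabauty convergence yet sits at distance in $[R/3,2R/3]$ from $W^0$ --- a contradiction. Your approach instead quotients by $\Gamma_0=\Gamma\cap W^0$ to compactify the $W^0$-direction, making the transverse projection $p$ proper, and then runs a doubling argument on the image subgroup $p(\tilde W_n)$. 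Both proofs rest on the same two ingredients --- the gap coming from discreteness of $W/W^0$, and a lattice to control the $W^0$-direction --- but yours absorbs the lattice translation into a single quotient, which gives a cleaner geometric picture and isolates the one nontrivial step (no small nonzero elements in $p(\tilde W_n)$), while the paper's stays entirely inside $V$ and avoids the small overhead of checking that Chabauty convergence passes to $V/\Gamma_0$. It is also worth noting that your argument uses only $\Gamma_0=\Gamma\cap W^0$, whereas the paper invokes the full lattice $\Gamma$ for the translation step; of course $\Gamma_0$ would suffice there as well, once one observes (as you do) that it is a lattice in $W^0$.
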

\begin{proof}
Let $w\in W^0$. Then there exists a sequence $w_n\in W_n$ with $w_n\to w$. Set $R=d(W^0,W-W^0)$. We can suppose $d(w_n,W^0)\le 2R/3$ for all $n$. If $w_n\notin W^0$ for infinitely many $n$'s, we can find a positive integer $m_n$ such that $d(m_nw_n,W^0)=m_nd(w_n,W_0)\in[R/3,2R/3]$. Now we can find $\gamma_n\in\Gamma$ such that the sequence $(m_nw_n-\gamma_n)$ is bounded. By assumption it belongs to $W_n$, and $d(m_nw_n-\gamma_n,W^0)\in[R/3,2R/3]$. So it has a cluster value $x$ with $d(x,W^0)\in [R/3,2R/3]$, so $x\notin W$, but $W_n\to W$ forces $x\in W$, a contradiction, i.e. $w_n\in W^0$ for large $n$.
\end{proof}

\begin{prop}There exists a constant $C_d$ with the following property.
Whenever $\Gamma$ is a discrete subgroup of $\R^d$ with shortest vector of length $\ge R$, then $\Gamma^\vee$ is $\eps$-dense, with $\eps=C_d/R$.\label{dide}
\end{prop}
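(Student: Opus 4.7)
The strategy is to reduce to a full-rank lattice and then bound the covering radius of its dual using the geometry of numbers. For the reduction, let $k$ denote the rank of $\Gamma$, write $V'=\R\Gamma\subset\R^d$, and identify $(\R^d)^\vee$ with $(\R^d)^*$. The annihilator $(V')^\circ\subset(\R^d)^*$ of $V'$ is a linear subspace of dimension $d-k$ entirely contained in $\Gamma^\vee$. Fixing the orthogonal decomposition $(\R^d)^*=(V')^*\oplus(V')^\circ$, one checks that $\Gamma^\vee$ is precisely the preimage of the dual lattice $\Gamma^*\subset(V')^*$. Hence $\eps$-density of $\Gamma^\vee$ in $(\R^d)^*$ is equivalent to $\eps$-density of $\Gamma^*$ in $(V')^*\simeq\R^k$, and it suffices to show that for a full-rank lattice $\Gamma\subset\R^k$ with $\lambda_1(\Gamma)\ge R$, the covering radius $\mu(\Gamma^*)$ is at most $C_k/R$.

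The covering radius is controlled by the largest successive minimum: if $u_1,\dots,u_k\in\Lambda$ realize the $\lambda_i(\Lambda)$, the sublattice they generate has covering radius at most $\tfrac12\sum|u_i|\le\tfrac{k}{2}\lambda_k(\Lambda)$, and $\mu(\Lambda)$ is bounded by this. So I need an upper bound on $\lambda_k(\Gamma^*)$. Minkowski's second theorem applied to both $\Gamma$ and $\Gamma^*$ gives
\[\Bigl(\prod_{i=1}^k\lambda_i(\Gamma)\Bigr)\Bigl(\prod_{i=1}^k\lambda_i(\Gamma^*)\Bigr)\le(C_k')^2\operatorname{covol}(\Gamma)\operatorname{covol}(\Gamma^*)=(C_k')^2,\]
using $\operatorname{covol}(\Gamma^*)=1/\operatorname{covol}(\Gamma)$. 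Re-indexing as $\prod_i\lambda_i(\Gamma)\lambda_{k+1-i}(\Gamma^*)\le(C_k')^2$, and invoking the elementary pairing bound $\lambda_i(\Gamma)\lambda_{k+1-i}(\Gamma^*)\ge 1$ for each $i$ (which holds because the families realizing the two successive minima span subspaces of total dimension $k+1>k$, so some pairing $\langle v,w\rangle$ is a nonzero integer with $1\le|\langle v,w\rangle|\le|v|\,|w|$), each individual factor in the product must be at most $(C_k')^2$. Specializing $i=1$ yields $\lambda_k(\Gamma^*)\le(C_k')^2/\lambda_1(\Gamma)\le(C_k')^2/R$, whence $\mu(\Gamma^*)\le C_d/R$ with $C_d=\tfrac{k(C_k')^2}{2}$.

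The core difficulty is the transference step bounding $\lambda_k(\Gamma^*)$ rather than merely the covolume of $\Gamma^*$. The Minkowski packing bound $\operatorname{covol}(\Gamma)\ge V_k(R/2)^k$ makes $\operatorname{covol}(\Gamma^*)$ small, but this alone does not control the covering radius: a thin lattice can have small covolume and arbitrarily large covering radius. Converting the pointwise hypothesis $\lambda_1(\Gamma)\ge R$ into an upper bound on \emph{every} successive minimum of $\Gamma^*$ is what makes the geometry-of-numbers argument necessary and is the heart of the proof.
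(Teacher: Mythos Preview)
Your argument is correct. It differs from the paper's in two respects.

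\textbf{Reduction to full rank.} The paper enlarges $\Gamma$ to a full-rank lattice $\Lambda\subset\R^d$ with the same shortest vector (e.g.\ by adding an orthogonal copy of $R\Z^{d-k}$); since $\Lambda^\vee\subset\Gamma^\vee$, density for $\Lambda^\vee$ suffices. You instead restrict to the span $V'=\R\Gamma\simeq\R^k$ and observe that $\Gamma^\vee=(V')^\circ\oplus\Gamma^*$, reducing to the dual lattice in $(V')^*$. Both reductions are clean; yours has the minor advantage of not requiring the extension construction, at the cost of a constant that a priori depends on $k$ (easily fixed by taking $C_d=\max_{k\le d}\tfrac{k}{2}(C_k')^2$).

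\textbf{The transference step.} The paper simply invokes Mahler's transference theorem for the full-rank case. You reprove it: the lower bound $\lambda_i(\Gamma)\lambda_{k+1-i}(\Gamma^*)\ge 1$ combined with Minkowski's second theorem for both $\Gamma$ and $\Gamma^*$ forces each factor in $\prod_i\lambda_i(\Gamma)\lambda_{k+1-i}(\Gamma^*)\le(C_k')^2$ to be bounded, whence $\lambda_k(\Gamma^*)\le(C_k')^2/\lambda_1(\Gamma)$. This is precisely the ``elementary approach'' the paper alludes to in its parenthetical remark, yielding a constant $C_d$ exponential in $d$ (since $C_k'=2^k/V_k$); the paper notes that Banaszczyk's sharper methods give a linear bound, which your route does not reach.

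One small imprecision: in your justification of $\lambda_i(\Gamma)\lambda_{k+1-i}(\Gamma^*)\ge 1$, the phrase ``span subspaces of total dimension $k+1>k$'' is slightly loose since the two families live in dual spaces; the correct statement is that the $(k+1-i)$ independent dual vectors cannot all annihilate the $i$-dimensional span of the primal vectors, so some pairing is a nonzero integer. The conclusion stands.
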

\begin{proof}
We can embed $\Gamma$ in a lattice having the same shortest vector. This reduces to the classical case of lattices, due to Mahler \cite{Mah} (an elementary approach provides $C_d$ with an exponential upper bound with respect to $d$; nevertheless a linear bound on $C_d$ can be obtained \cite{Ban}).
\end{proof}

\begin{proof}[Proof of Theorem \ref{pce}]
We identify $V=\R^d$. Suppose $H_n$ tends to $H=\R^k\times\Z^\ell\times\{0\}^m$ ($k+\l+m=d$) and let us prove that $H_n^\vee$ tends to $\{0\}^k\times\Z^\ell\times\R^m$. Taking subsequences if necessary, we can suppose these sequences converge. Clearly the limit $W$ of $H_n^\vee$ is ``$\R/\Z$-orthogonal" to $H$ and therefore $W$ is contained in $\{0\}^k\times\Z^\ell\times\R^m$.

In view of Lemma \ref{matrices}, we can suppose that $H_n$ contains $\Z^{k+\ell}\times\{0\}^m$. (Indeed, $(A_nH_n)$ tends to $H$ and $(A_nH_n)^\vee=\left({A_n^\top}\right)^{-1}H_n^\vee$ tends to $W$.) We then claim that the orthogonal projection of $H_n$ onto $\{0\}^{k+\ell}\times\R^m$ is a discrete subgroup $\Gamma_n$ of systole (shortest vector) $\to\infty$. Indeed, if $H_n$ contains an element of the form $(x_n,y_n,z_n)$ with $(z_n)$ bounded and nonzero for infinitely many $n$'s, we can first multiply these elements by integers so that $\|z_n\|\ge 1$ for infinitely many $n$'s, and then  by translating by elements of $\Z^{k+\ell}\times\{0\}^m$, which is contained in $H_n$ by assumption, we can suppose $x_n$ and $y_n$ are bounded. Then at the limit, we obtain in $H$ an element whose third coordinate is nonzero, a contradiction. Thus $H_n\subset\R^k\times\Z^\ell\times\Gamma_n$, and the systole of $\Gamma_n$ tends to infinity. So $$H_n^\vee\supset\{0\}^k\times\Z^\ell\times\Gamma_n^\vee.$$ By Proposition \ref{dide}, $\Gamma_n^\vee$ tends to $\R^m$. So $W\supset\{0\}^k\times\Z^\ell\times\R^m$ and we are done.
\end{proof}

\begin{proof}[Proof of Theorem \ref{pc}]
It is explained in \cite{PTd} how Theorem \ref{pce} implies Theorem \ref{pc}; we do not repeat the full argument here, but the reader can complete the proof as follows:
\begin{itemize}
\item Show that the class $\mathcal{C}$ of locally compact abelian groups $G$ for which the orthogonal map $\mathcal{S}(G)\to\mathcal{S}(G^\vee)$ is continuous, is stable under taking
\begin{itemize}
\item
closed subgroups;
\item Pontryagin dual;
\item direct limits (namely if every open compactly generated subgroup of $G$ is in $\mathcal{C}$ then $G$ is in $\mathcal{C}$).
\end{itemize}
\item Check that the smallest class of (isomorphism classes of) locally compact abelian groups containing $\R^d$ for all $d$ and closed under the three operations above is the class of all locally compact abelian groups.\qedhere
\end{itemize}
\end{proof}

\section{Isolated points}

In this section, we prove Theorem \ref{posil}.

\begin{lem}\label{zeri}
The subgroup $\{0\}$ is isolated in $\mathcal{S}(G)$ if and only if $G\simeq T\times D$, with $T\simeq (\R/\Z)^k$ a torus, and $D$ discrete Artinian.
\end{lem}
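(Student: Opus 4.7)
I would prove both directions.

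For the ``if'' direction: given $G = T \times D$ with $T \simeq (\R/\Z)^k$ a torus and $D$ discrete Artinian, I exhibit an explicit compact set $K \subset G \setminus \{0\}$ meeting every non-trivial closed subgroup, which directly witnesses isolation of $\{0\}$ via the basic Chabauty neighborhoods of $\{0\}$. Since $T$ is a Lie group, it has no small subgroups: there is an open neighborhood $U \ni 0$ in $T$ containing no non-trivial subgroup; set $K_T = T \setminus U$ (compact, since $T$ is, and disjoint from $0$). Since $D$ is Artinian its socle $S = \bigoplus_p D[p]$ is finite, and by DCC every non-trivial subgroup of $D$ contains a simple subgroup, hence a non-zero element of $S$; set $K_D = S \setminus \{0\}$. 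Then $K := (K_T \times \{0\}) \cup (T \times K_D)$ is compact and avoids $0$. A non-trivial closed subgroup $H \subset G$ either has trivial projection $\pi_D(H) = \{0\}$ (then $H \subset T$ is non-trivial and meets $K_T \times \{0\}$), or non-trivial projection (then some $(t,d) \in H$ has $d \in K_D$, yielding an element of $T \times K_D$).

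For the ``only if'' direction, I derive the structure of $G$ from isolation step by step. \emph{Step 1:} $R(G) = 0$, since otherwise writing $G \simeq \R \times G'$ gives non-trivial subgroups $n\Z \times \{0\}$ converging to $\{0\}$ as $n \to \infty$ (any compact $K \subset G$ sits in $[-C,C] \times K'$, and $n\Z \cap [-C,C] = \{0\}$ for $n > C$). \emph{Step 2:} By Proposition \ref{rcd} there is a compact open subgroup $M \subset G$; the closed embedding $M \hookrightarrow G$ induces a continuous map $\mathcal{S}(M) \to \mathcal{S}(G)$, so $\{0\}$ is isolated in $\mathcal{S}(M)$. \emph{Step 3:} By Pontryagin--Chabauty duality (Theorem \ref{pc}), this is equivalent to $M^\vee$ being isolated in $\mathcal{S}(M^\vee)$. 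Since $M^\vee$ is discrete, a basic Chabauty neighborhood of the full group $M^\vee$ in $\mathcal{S}(M^\vee)$ has the form $\{F \in \mathcal{S}(M^\vee) : a_1, \dots, a_n \in F\}$; isolation means $\langle a_1, \dots, a_n \rangle = M^\vee$, i.e., $M^\vee$ is finitely generated, so $M^\vee \simeq \Z^a \oplus F$ and, dualizing, $M \simeq T_1 \times F_1$ with $T_1 = (\R/\Z)^a$ a torus and $F_1$ finite.

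\emph{Step 4:} The torus $T_1$, being a closed finite-index subgroup of $M$, is open in $M$ and hence in $G$, so $D := G/T_1$ is discrete. Divisibility of $T_1$ (injectivity as a $\Z$-module) gives an abstract splitting $s: D \to G$ of $0 \to T_1 \to G \to D \to 0$, continuous since $D$ is discrete; the induced map $T_1 \times D \to G$ is a topological isomorphism (its inverse $g \mapsto (g - s\pi(g), \pi(g))$ is continuous). \emph{Step 5:} Continuity of $\mathcal{S}(D) \to \mathcal{S}(G)$, $H \mapsto \{0\} \times H$, gives isolation of $\{0\}$ in $\mathcal{S}(D)$, and it remains to show this forces $D$ Artinian. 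If $D$ has an element of infinite order, then $n\Z \to \{0\}$ in the copy of $\Z \hookrightarrow D$, a contradiction. If $D$ is torsion with infinite socle $\bigoplus_p D[p]$, one extracts either infinitely many $\mathbf{Z}/p$-independent elements of a common prime order $p$, or elements of infinitely many distinct prime orders; in either case the cyclic subgroups they generate converge to $\{0\}$. If $D$ is torsion with finite socle, then $D^\vee$ is a finite product (over primes $p$ with $D[p] \ne 0$) of compact pro-$p$ groups each with finite reduction mod $p$, hence topologically finitely generated as $\Z_p$-modules by Nakayama's lemma; dualizing gives $D \simeq \bigoplus_i \mathbf{C}_{p_i^\infty}^{a_i} \oplus F$, which is Artinian.

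The main technical obstacle I expect is Step 5's final subcase: showing that a torsion abelian group with finite socle is Artinian. This is not immediate from elementary chain arguments, and my plan resolves it by Pontryagin dualizing to a compact pro-$p$ setting and invoking the topological Nakayama lemma for compact $\Z_p$-modules.
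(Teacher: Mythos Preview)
Your proof is correct and follows essentially the same skeleton as the paper's: for the ``only if'' direction, both arguments pass to a compact open subgroup, apply Pontryagin--Chabauty duality (Theorem~\ref{pc}) to see that its dual is finitely generated and hence the subgroup is torus-by-finite, split off the torus by divisibility, and then argue that the discrete quotient must be Artinian; for the ``if'' direction, both use the finite socle of $D$ to build a discriminating set. The substantive difference is that the paper outsources the two discrete characterizations (a discrete group $H$ is isolated in $\mathcal{S}(H)$ iff $H$ is finitely generated; $\{0\}$ is isolated in $\mathcal{S}(D)$ for discrete $D$ iff $D$ is Artinian) to \cite{CGP}, whereas you prove them inline---the second via a Pontryagin-dual topological Nakayama argument for pro-$p$ modules, which is a pleasant and self-contained route. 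Your ``if'' direction is also slightly more direct: you assemble the compact discriminating set $K$ in one stroke from no-small-subgroups for $T$ together with the finite socle of $D$, whereas the paper first reduces to finite $D$ via the same socle idea and then handles that case by duality.
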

\begin{proof}
Suppose $\{0\}$ isolated. Then $G$ does not contain any discrete copy of $\Z$, so is elliptic. Let $U$ be a compact open subgroup. Then $\{0\}$ is isolated in $\mathcal{S}(U)$, so $U^\vee$ is isolated in $\mathcal{S}(U^\vee)$. As $U^\vee$ is discrete, this means that $U^\vee$ is finitely generated (see \cite{CGP}), so replacing $U$ by a finite index subgroup, we can suppose that $U$ is a torus. As $U$ is open and divisible, it has a direct factor $D$ in $G$, which is discrete. As $\{0\}$ is isolated in $\mathcal{S}(D)$ as well, we deduce from \cite[Lemma~4.1]{CGP} that $D$ is Artinian. 

Conversely, let us assume that $T$ is a torus, $D$ is Artinian, $G=T\times D$, and let us prove that $\{0\}$ is isolated in $\mathcal{S}(G)$. If $D$ is finite, then the Pontryagin dual is a finitely generated abelian group, so $G^\vee$ is isolated in $\mathcal{S}(G^\vee)$, so $\{0\}$ is isolated in $\mathcal{S}(G)$. In general, if $D_{\text{prime}}$ is the subgroup of $D$ generated by elements of prime order, it is easy to check that $T\times D_{\text{prime}}-\{0\}$ is a discriminating subset of $T\times D$, that is, every non-trivial subgroup of $T\times D$ has non-trivial intersection with $T\times D_{\text{prime}}$. Therefore since $D_{\text{prime}}$ is finite, this reduces to the case when $D$ is finite and we are done.
\end{proof}

We say that a group is {\it adic} if it is isomorphic to a finite direct product $\prod_i\Z_{p_i}$.

\begin{lem}\label{gi}
The subgroup $G$ is isolated in $\mathcal{S}(G)$ if and only if $G\simeq A\times P$, with $A$ a finitely generated abelian group, and $P$ an adic group.
\end{lem}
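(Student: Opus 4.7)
The plan is to derive Lemma \ref{gi} from Lemma \ref{zeri} by dualizing. The key observation is that Pontryagin-Chabauty duality (Theorem \ref{pc}) sends the point $G\in\mathcal{S}(G)$ to the point $G^\curlywedge=\{0\}\in\mathcal{S}(G^\vee)$, and it is a homeomorphism. Hence $G$ is isolated in $\mathcal{S}(G)$ if and only if $\{0\}$ is isolated in $\mathcal{S}(G^\vee)$. By Lemma \ref{zeri}, this happens exactly when $G^\vee\simeq T\times D$ with $T$ a torus and $D$ discrete Artinian.

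Next I would translate this dual description back to $G$ using the duality correspondences recalled in Paragraph \ref{opd}. Writing $T\simeq(\R/\Z)^k$, we have $T^\vee\simeq\Z^k$; and the dual of a discrete Artinian group is adic (this is the ``discrete artinian $\leftrightarrow$ adic" row of the table, which follows from the fact that an Artinian discrete abelian group is a finite direct product of Pr\"ufer groups and finite cyclic groups, whose duals are respectively $\Z_p$'s and finite cyclic groups). Therefore
\[
G\simeq(T\times D)^\vee\simeq T^\vee\times D^\vee\simeq\Z^k\times P',
\]
where $P'=D^\vee$ is adic. Setting $A=\Z^k$ (finitely generated abelian) and $P=P'$ yields the desired decomposition.

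For the converse, suppose $G\simeq A\times P$ with $A$ finitely generated abelian and $P$ adic. Write $A\simeq\Z^k\times F$ with $F$ finite; then $F\times P$ is again adic (since a finite abelian group is a finite direct product of finite cyclic groups, and the class of adic groups is closed under finite direct products). Dualizing gives
\[
G^\vee\simeq (\R/\Z)^k\times(F\times P)^\vee,
\]
which is of the form torus $\times$ discrete Artinian group. By Lemma \ref{zeri}, $\{0\}$ is isolated in $\mathcal{S}(G^\vee)$, so by Theorem \ref{pc}, $G$ is isolated in $\mathcal{S}(G)$.

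The only nontrivial point is the bookkeeping of the duality correspondence discrete Artinian $\leftrightarrow$ adic, and the mild observation that finite factors can be absorbed into either the finitely generated factor $A$ or the adic factor $P$. There is no real obstacle here once Lemma \ref{zeri} and Theorem \ref{pc} are available; the lemma is essentially the Pontryagin dual of the previous lemma.
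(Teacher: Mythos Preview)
Your proof is correct and follows exactly the paper's approach: the paper's proof is the single sentence ``Follows from Lemma \ref{zeri} by Pontryagin-Chabauty duality,'' and you have simply unpacked this in detail, carrying out the duality correspondence torus $\leftrightarrow$ finitely generated free abelian and discrete Artinian $\leftrightarrow$ adic, together with the minor bookkeeping of finite factors.
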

\begin{proof}
Follows from Lemma \ref{zeri} by Pontryagin-Chabauty duality.
\end{proof}

In the discrete setting, a necessary and sufficient condition for a subgroup $H$ to be isolated in $\mathcal{S}(G)$ is that $H$ be isolated in $\mathcal{S}(H)$ and $\{0\}$ be isolated in $\mathcal{S}(G/H)$. In the locally compact setting, this does not hold any longer, although there is essentially a unique obstruction, given by the following lemma.

\begin{lem}\label{zzrz}
The subgroup $\Z\times\{0\}$ is not isolated in $\mathcal{S}(\Z\times\R/\Z)$.
\end{lem}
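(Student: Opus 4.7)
The plan is to exhibit an explicit sequence of distinct closed subgroups of $\Z\times\R/\Z$ converging to $\Z\times\{0\}$ in the Chabauty topology. The natural candidates are the ``tilted'' infinite cyclic subgroups generated by elements of the form $(1,\alpha_n)$ with $\alpha_n\to 0$ in $\R/\Z$ but $\alpha_n\neq 0$; explicitly, I would set
$$H_n=\langle (1,1/n+\Z)\rangle=\{(k,k/n+\Z):k\in\Z\}\subset \Z\times\R/\Z$$
for $n\ge 2$. Each $H_n$ is discrete (projection to $\Z$ is injective), hence closed, and $H_n\neq\Z\times\{0\}$ since it contains $(1,1/n+\Z)\notin\Z\times\{0\}$.

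To verify $H_n\to\Z\times\{0\}$, I would use the third characterization of Chabauty convergence from the lemma in Paragraph~\ref{chab}. For any $(k,0)\in\Z\times\{0\}$ and any neighbourhood $V$ of it, the element $(k,k/n+\Z)\in H_n$ lies in $V$ for $n$ large enough, so $H_n$ eventually meets $V$. Conversely, for any $(m,\alpha)\notin\Z\times\{0\}$ (so $\alpha\neq 0$ in $\R/\Z$), the only element of $H_n$ with first coordinate equal to $m$ is $(m,m/n+\Z)$, which tends to $(m,0)$ as $n\to\infty$; hence a small open neighbourhood $V$ of $(m,\alpha)$ of the form $\{m\}\times U$ (with $U$ a small open around $\alpha$ avoiding $0$) is eventually disjoint from $H_n$. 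This gives both conditions, so $H_n\to\Z\times\{0\}$.

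There is essentially no obstacle here: the only minor thing to be careful about is that when checking the second condition, one must restrict attention to the discrete set of elements of $H_n$ with a fixed first coordinate, which is why the product structure of $\Z\times\R/\Z$ (in particular, the fact that $\Z$ is discrete so we can isolate individual fibres) is used. Since $H_n$ is visibly distinct from $\Z\times\{0\}$ for every $n\ge 2$, the subgroup $\Z\times\{0\}$ is not isolated in $\mathcal{S}(\Z\times\R/\Z)$.
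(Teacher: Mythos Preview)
Your proof is correct and follows exactly the same idea as the paper: the paper simply observes that $\Z\times\{0\}$ lies in the continuous family of subgroups $\langle(1,t)\rangle$ for $t\in\R/\Z$, and you pick the discrete sequence $t=1/n$ from this family and verify convergence in detail. Your verification of both halves of the Chabauty convergence criterion is accurate.
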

\begin{proof}
It is part of the continuous family of subgroups $\langle(1,t)\rangle$ for $t\in\R/\Z$.
\end{proof}

\begin{proof}[{\bf Proof of Theorem \ref{posil}}]
Suppose that $H$ is isolated. So $H$ is isolated in $\mathcal{S}(H)$ and $\{0\}$ is isolated in $\mathcal{S}(G/H)$. By Lemmas \ref{zeri} and \ref{gi}, we deduce that $H\simeq A\times P$, $G/H\simeq T\times D$, with $A$ finitely generated abelian, $P$ adic, $T$ torus, $D$ Artinian. Assume that simultaneously $A$ is infinite and $T$ is non-trivial. Then we can embed $\mathcal{S}(\Z\times\R/\Z)$ into $\mathcal{S}(G)$, mapping $\Z$ to $H$. But $\Z$ is not isolated in $\mathcal{S}(\Z\times\R/\Z)$ by Lemma \ref{zzrz}, so $H$ is not isolated.

Conversely, assume that $H\simeq A\times P$, $G/H\simeq D$, $A$ finitely generated abelian, $P$ adic, $D$ Artinian. Suppose $L$ is close enough to $H$. As $H$ is clopen, this means that $L\cap H$ is close to $H$, and as $H$ is isolated in $\mathcal{S}(H)$ by Lemma \ref{gi}, this implies that $L$ contains $H$. Now using that $\{0\}$ is isolated in $\mathcal{S}(G/H)$, we deduce that $L=H$, so $H$ is isolated. The second case is equivalent by Pontryagin-Chabauty duality.
\end{proof}

\section{Some natural maps}

Let $G$ be a locally compact abelian group. If $\Omega$ is an open subgroup, then the map $\mathcal{S}(G)\to\mathcal{S}(\Omega)$ mapping $H$ to $H\cap\Omega$ is obviously continuous and surjective, and we refer to it as the natural map $i_\Omega:\mathcal{S}(G)\to\mathcal{S}(\Omega)$.

By duality, if $K$ is a compact subgroup, then the projection map $p_K:\mathcal{S}(G)\mapsto\mathcal{S}(G/K)$ mapping $H$ to $(H+K)/K$, is continuous and surjective. (When $\Omega$ is not open or $K$ non-compact, these maps often fail to be continuous, see \cite{PT} for a discussion.)

Note that if $K\subset\Omega$, the maps $i_\Omega$ and $p_K$ commute in an obvious way, and the composition map $\rho_{\Omega,K}:\mathcal{S}(G)\to\mathcal{S}(\Omega/K)$ maps $H$ to $((H+K)\cap\Omega)/K=((H\cap\Omega)+K)/K$.

If $K'\subset K\subset \Omega\subset \Omega'$, then obviously
$$\rho^G_{\Omega,K}=\rho^{\Omega'/K'}_{\Omega/K',K/K'}\circ\rho^G_{\Omega',K'},$$
where we write $\rho^G_{\Omega,K}$ for $\rho_{\Omega,K}$ in order to mention $G$. Therefore, if we have a net $(K_i,\Omega_i)$, filtering in the sense that if $j\ge i$ then 
$K_j\subset K_i\subset \Omega_i\subset \Omega_j$, we obtain a natural map
$$\mathcal{S}(G)\to\underleftarrow{\lim}\mathcal{S}(\Omega_i/K_i)$$
This map has dense image, hence is surjective. It is injective provided $\bigcap K_i=\{0\}$ and $\bigcup\Omega_i=G$, in which case it is a homeomorphism.

We can play another game with these maps. Consider the diagonal map
$$i_\Omega\times p_K:\mathcal{S}(G)\to\mathcal{S}(\Omega)\times\mathcal{S}(G/K).$$
This map need not be surjective. It is interesting because its fibers have a very special form.

\begin{prop}\label{split}
Let $G$ be a locally compact abelian group, $K$ a compact subgroup, $\Omega$ an open subgroup containing $K$. Consider the map
\begin{eqnarray*}
\mathcal{S}(G) & \to & \mathcal{S}(\Omega)\times\mathcal{S}(G/K)\\
 H &\mapsto & (H\cap \Omega, (H+K)/K)
\end{eqnarray*}
as above.
Then the fiber of $(R,M/K)$ is either empty, or homeomorphic to the compact group $$\Hom((M+\Omega)/\Omega,K/(K\cap R))$$ (in particular it is homogeneous).
\end{prop}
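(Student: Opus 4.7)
\medskip

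\noindent\textbf{Plan.} Assuming the fiber over $(R,M/K)$ is nonempty, I would fix a basepoint $H_0$ in it and set up a bijection between the fiber and $\Hom((M+\Omega)/\Omega,K/(K\cap R))$. First I collect basic consequences: for any $H$ in the fiber, $H\subset H+K=M$, and applying this to $H_0$ together with $K\subset\Omega$ gives $M\cap\Omega=K+R$ (indeed any $x=h_0+k\in M\cap\Omega$ forces $h_0=x-k\in H_0\cap\Omega=R$). Consequently the natural continuous bijection $K/(K\cap R)\to (M\cap\Omega)/R$ is a homeomorphism of compact Hausdorff groups, which I henceforth use to identify them.

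Next, let $p:G\to G/\Omega$ and $N=p(M)=(M+\Omega)/\Omega$, a discrete group. For any $H$ in the fiber, $p(H)=N$ (since $p(K)=0$ and $H+K=M$) and $\ker(p|_H)=R$. For each $n\in N$ I fix once and for all $h_0(n)\in H_0\cap p^{-1}(n)$. Given $H$ in the fiber and any $h(n)\in H\cap p^{-1}(n)$, the element $h(n)-h_0(n)$ lies in $M\cap\Omega$ and its class modulo $R$ depends only on $n$; I set $\phi_H(n):=h(n)-h_0(n)+R\in (M\cap\Omega)/R$. Choosing $h(n+n')=h(n)+h(n')$ inside $H$ shows $\phi_H$ is a homomorphism. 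Conversely any $\phi\in\Hom(N,(M\cap\Omega)/R)$ produces a subgroup $H_\phi\subset M$ whose image in $M/R$ equals $(\sigma_0+\phi)(N)$, where $\sigma_0:N\to M/R$ is the canonical section with image $H_0/R$; the conditions $H_\phi\cap\Omega=R$ and $H_\phi+K=M$ follow from the trivial intersection of $(\sigma_0+\phi)(N)$ with the kernel $(M\cap\Omega)/R$ of $M/R\to N$, combined with $K+R=M\cap\Omega$. These constructions are mutually inverse.

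The main technical step will be continuity of $H\mapsto\phi_H$ from the fiber (Chabauty topology) into $\Hom(N,K/(K\cap R))$ with the topology of pointwise convergence. Fix $n\in N$ and $h(n)\in H\cap p^{-1}(n)$; when $H_\alpha\to H$ in the fiber, Chabauty convergence provides $h_\alpha\in H_\alpha$ with $h_\alpha\to h(n)$. Since $\Omega$ is an open neighbourhood of $0$, eventually $h_\alpha-h(n)\in\Omega$, so $h_\alpha\in H_\alpha\cap p^{-1}(n)$ and $\phi_{H_\alpha}(n)=h_\alpha-h_0(n)+R\to h(n)-h_0(n)+R=\phi_H(n)$ by continuity of the quotient $M\cap\Omega\to (M\cap\Omega)/R$. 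This gives pointwise continuity, hence continuity into the product topology.

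To finish, the fiber is the preimage of the singleton $(R,M/K)$ under the continuous map $i_\Omega\times p_K$, hence closed in $\mathcal{S}(G)$ and therefore compact. The target $\Hom(N,K/(K\cap R))$ is Hausdorff (as a closed subgroup of $(K/(K\cap R))^N$), so a continuous bijection from the fiber to it is automatically a homeomorphism; homogeneity is then immediate since the target is a topological group. The hard part will be the continuity step: the Chabauty topology is not generally compatible with intersections with open sets, but the constancy $H\cap\Omega=R$ on the fiber is exactly what rescues the argument.
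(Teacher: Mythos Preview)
Your proof is correct and follows essentially the same strategy as the paper's: both identify elements of the fiber with graphs of homomorphisms into $K/(K\cap R)$, the paper first reducing (by passing to $M/R$ and restricting to $M$) to the normalized case $R=0$, $M=G$, where nonemptiness of the fiber forces $K=\Omega$, whereas you work directly without the reduction and obtain the equivalent statement $M\cap\Omega=K+R$. Your explicit Chabauty continuity argument, together with the compact--Hausdorff bijection trick, is a welcome addition that the paper's terse proof leaves implicit.
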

\begin{proof}
If $H\cap\Omega=R$ is assumed, then obviously $H+K=M$ is equivalent to $H+K+R=M$. So we can suppose that $R=0$ without changing the statement. Also, we can suppose that $M=G$. So we have to prove that the set $\mathcal{F}$ of subgroups $H$ with $H\cap\Omega=\{0\}$ and $H+K=G$ is either empty or homeomorphic to $\Hom(G/\Omega,K)$. Suppose there is at least one such group $L$. Then since $K\subset\Omega$ we get $G=V\oplus K=V\oplus\Omega$, so $K=\Omega$ (in the original group this means that $\Omega\cap M=K+R$ when the fiber is non-empty) and $V$ is isomorphic to $G/\Omega$. We see that $H\in\mathcal{F}$ if and only if it is the graph of a homomorphism $V\to K$. So $\mathcal{F}$ is homeomorphic to $\Hom(G/\Omega,K)$.
\end{proof}

\section{Groups with $R=0$: study of connected components}\label{sec:rzero}

In this section and the next one, we study connectedness of $\mathcal{S}(G)$. The study of path-connectedness and local connectedness of $G$ itself was done by Dixmier \cite{Dix} and seems to be, to a large extent, fairly unrelated.

\begin{lem}\label{dc}
Let $G$ be a locally compact abelian group. If $G$ is either discrete or compact, then $\mathcal{S}(G)$ is totally disconnected.
\end{lem}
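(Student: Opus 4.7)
The plan is to reduce the lemma to two separate assertions and then dispatch each quickly, using Pontryagin-Chabauty duality (Theorem \ref{pc}) to pass from the compact case to the discrete one.

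First I would handle the discrete case directly by exhibiting enough clopen sets in $\mathcal{S}(G)$ to separate points. When $G$ is discrete, every singleton $\{x\}$ is both compact and open, so the sets
\[
\{H\in\mathcal{S}(G):x\in H\}=\Omega(\emptyset;\{x\})\quad\text{and}\quad\{H\in\mathcal{S}(G):x\notin H\}=\Omega(\{x\};)
\]
are both open in the Chabauty topology, hence clopen. Consequently, for each $x\in G$ the ``membership indicator'' $e_x:\mathcal{S}(G)\to\{0,1\}$ defined by $e_x(H)=1$ iff $x\in H$ is continuous. Since a closed subgroup of a discrete group is determined by its underlying set, and a set is determined by its indicator function, the product map
\[
\mathcal{S}(G)\longrightarrow\{0,1\}^{G},\qquad H\longmapsto (e_x(H))_{x\in G}
\]
is continuous and injective. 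The target is totally disconnected (as a product of totally disconnected spaces), so its subspace $\mathcal{S}(G)$ is totally disconnected as well.

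Next I would deduce the compact case from the discrete one by duality. If $G$ is compact, then $G^\vee$ is discrete (see Paragraph \ref{opd}), so by the discrete case $\mathcal{S}(G^\vee)$ is totally disconnected. By Theorem \ref{pc} the orthogonal map $\mathcal{S}(G)\to\mathcal{S}(G^\vee)$ is a homeomorphism, hence $\mathcal{S}(G)$ is totally disconnected too.

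There is essentially no obstacle here once Theorem \ref{pc} is available: all of the work has already been done, and the discrete case is a straightforward unpacking of the definition of the Chabauty topology. The one subtlety worth flagging is that without Pontryagin-Chabauty duality one would have to argue the compact case by hand (for instance, by writing a compact abelian group as a projective limit of compact Lie groups and analyzing the maps $p_K$ of the previous section), so invoking Theorem \ref{pc} is what keeps the argument short.
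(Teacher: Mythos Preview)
Your proof is correct and follows essentially the same approach as the paper: the paper simply states that in the discrete case $\mathcal{S}(G)$ is a closed subset of $2^G$ (which is precisely the embedding you spell out via the indicators $e_x$), and then invokes Pontryagin--Chabauty duality for the compact case exactly as you do.
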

\begin{proof}
If $G$ is discrete, then $\mathcal{S}(G)$ is a closed subset of $2^G$ and is therefore totally disconnected. By duality, we deduce the same result if $G$ is compact.
\end{proof}

\begin{prop}
If $R(G)=0$ and $G$ is infinite, then $\mathcal{S}(G)$ has infinitely many connected components.
\end{prop}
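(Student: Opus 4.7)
The plan is to reduce the statement to a totally disconnected image via the natural projection $p_K$ from Section 5. Since $R(G)=0$, Lemma \ref{Rop} provides a compact open subgroup $K\subseteq G$ with $G/K$ discrete. As $G$ is infinite, at least one of $K$ and $G/K$ is infinite; by Pontryagin--Chabauty duality (Theorem \ref{pc}) these two cases correspond via $G\leftrightarrow G^\vee$, so it suffices to handle the first.

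Assume $G/K$ is infinite. Lemma \ref{dc} gives that $\mathcal{S}(G/K)$ is totally disconnected, so each connected component is a singleton; it thus suffices to show $\mathcal{S}(G/K)$ is infinite as a set. This reduces to the elementary fact that every infinite abelian group has infinitely many subgroups: if $G/K$ has an element $a$ of infinite order, the cyclic subgroups $\langle 2^n a\rangle$ are pairwise distinct; otherwise $G/K$ is torsion and its primary decomposition produces infinitely many distinct subgroups (either from a single $p$-component of unbounded exponent or of infinite bounded exponent, or by accumulating infinitely many nonzero primary components). The natural projection $p_K:\mathcal{S}(G)\to\mathcal{S}(G/K)$ from Section 5 is continuous and surjective, so it induces a surjection $\pi_0(\mathcal{S}(G))\to\pi_0(\mathcal{S}(G/K))$, and the latter is infinite; hence $\mathcal{S}(G)$ has infinitely many connected components.

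If instead $K$ is infinite (and $G/K$ finite), dualizing the exact sequence $0\to K\to G\to G/K\to 0$ yields $0\to (G/K)^\vee\to G^\vee\to K^\vee\to 0$, exhibiting $(G/K)^\vee$ as a compact open subgroup of $G^\vee$ with infinite discrete quotient $K^\vee$. Applying the previous paragraph to $G^\vee$ shows that $\mathcal{S}(G^\vee)$ has infinitely many connected components, and the homeomorphism $\mathcal{S}(G)\cong\mathcal{S}(G^\vee)$ of Theorem \ref{pc} transfers the conclusion back to $G$.

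The only non-routine ingredient is the lemma that every infinite abelian group has infinitely many subgroups; everything else combines Lemma \ref{dc}, the continuity and surjectivity of $p_K$ noted in Section 5, and Pontryagin--Chabauty duality. Notably, the subtler content of Section 5 (such as the fiber description in Proposition \ref{split}) is not required here---only the existence of the continuous surjection $p_K$.
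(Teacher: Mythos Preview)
Your proof is correct and rests on the same idea as the paper's: exhibit a continuous surjection from $\mathcal{S}(G)$ onto an infinite totally disconnected space of subgroups. The paper's execution is the dual of yours and slightly more economical. Instead of splitting on whether $K$ or $G/K$ is infinite, the paper splits on whether $G$ is discrete (in which case $\mathcal{S}(G)$ is itself infinite and totally disconnected) or not; in the non-discrete case the compact open subgroup $M$ is automatically infinite, and the paper uses the \emph{restriction} map $i_M:\mathcal{S}(G)\to\mathcal{S}(M)$ rather than your projection $p_K$, together with Lemma~\ref{dc} and Theorem~\ref{pc} to see that $\mathcal{S}(M)\cong\mathcal{S}(M^\vee)$ is infinite and totally disconnected. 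This sidesteps your second case entirely, so Pontryagin--Chabauty duality is applied only to the compact open subgroup, not to $G$ itself. Both routes rely on the same unstated elementary fact that an infinite abelian group has infinitely many subgroups. One minor stylistic point: you announce early on that it suffices to handle one case by duality, but then handle both cases anyway; you can simply drop that sentence.
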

\begin{proof}
If $G$ is discrete, then $\mathcal{S}(G)$ is totally disconnected and infinite and we are done.

Assume $G$ non-discrete. By Lemma \ref{Rop}, there exists a compact open subgroup $M$ in $G$. The obvious map $\mathcal{S}(G)\to\mathcal{S}(M)$ is continuous and surjective. Since $M$ is compact and infinite, its Pontryagin dual is discrete and infinite (see Paragraph \ref{opd}), so $\mathcal{S}(M^\vee)$ is infinite and totally disconnected, and is homeomorphic to $\mathcal{S}(M)$ by Pontryagin-Chabauty duality (Theorem \ref{pc}).
\end{proof}

\begin{lem}\label{ttdd}
Suppose that $G$ is either elliptic, or totally disconnected. Then $\mathcal{S}(G)$ is totally disconnected.
\end{lem}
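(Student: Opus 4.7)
The plan is to handle the two cases by reducing one to the other via Pontryagin--Chabauty duality. By the correspondence recalled in Paragraph \ref{opd}, the class of elliptic locally compact abelian groups is exactly dual to the class of totally disconnected ones. Theorem \ref{pc} provides a homeomorphism $\mathcal{S}(G)\cong\mathcal{S}(G^\vee)$, so if $G$ is elliptic I may replace it by $G^\vee$ and reduce to the totally disconnected case.

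So assume $G$ is totally disconnected. By van Dantzig's theorem, the compact open subgroups of $G$ form a neighbourhood basis of $0$. Let $(K_i)$ denote the downward filtered family of all compact open subgroups of $G$, so $\bigcap_i K_i=\{0\}$, and take $\Omega_i=G$ for all $i$. This is a filtering family in the sense of the discussion preceding Proposition \ref{split}, with $\bigcup_i \Omega_i=G$ and $\bigcap_i K_i=\{0\}$, so the natural map
\[
\mathcal{S}(G)\;\longrightarrow\;\underleftarrow{\lim}\,\mathcal{S}(G/K_i)
\]
is a homeomorphism.

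For each $i$ the quotient $G/K_i$ is discrete (since $K_i$ is open), hence by Lemma \ref{dc} the space $\mathcal{S}(G/K_i)$ is totally disconnected. An inverse limit of compact Hausdorff totally disconnected spaces is again totally disconnected, being a closed subspace of the product of the factors. Therefore $\mathcal{S}(G)$ is totally disconnected, which completes the proof.

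I do not anticipate a genuine obstacle here: all the required ingredients (van Dantzig for compact open subgroups, the inverse limit identification set up in the previous section, Pontryagin--Chabauty duality, and Lemma \ref{dc} for the discrete case) are already available in the paper. The only delicate point is to notice that the elliptic case does not need a separate direct argument and instead follows cleanly from duality.
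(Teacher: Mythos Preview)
Your proof is correct, but it takes a genuinely different route from the paper's. Both arguments reduce the elliptic case to the totally disconnected one via Pontryagin--Chabauty duality. For the totally disconnected case, however, the paper fixes a single compact open subgroup $K$ and uses the diagonal map $\mathcal{S}(G)\to\mathcal{S}(K)\times\mathcal{S}(G/K)$ of Proposition~\ref{split}: the target is totally disconnected by Lemma~\ref{dc}, so every connected subset of $\mathcal{S}(G)$ lies in a fiber, and each nonempty fiber is identified with some $\Hom(D,Q)$ where $Q$ is a quotient of the totally disconnected compact group $K$, hence itself totally disconnected. Your approach instead invokes van Dantzig to obtain a full filtering basis $(K_i)$ of compact open subgroups and uses the inverse-limit identification set up just before Proposition~\ref{split} to write $\mathcal{S}(G)\cong\underleftarrow{\lim}\,\mathcal{S}(G/K_i)$, each factor being totally disconnected by Lemma~\ref{dc}. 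Your argument is slightly more direct for this particular lemma, as it sidesteps the fiber description entirely; the paper's approach, by contrast, exercises exactly the machinery (the map $i_\Omega\times p_K$ and its $\Hom$-fibers) that is immediately reused in Theorem~\ref{tcon} to describe the connected components of $\mathcal{S}(G)$ explicitly when $R(G)=0$.
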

\begin{proof}
Let $K$ be a compact open subgroup. Proposition \ref{split} provides a continuous map from $\mathcal{S}(G)$ to $\mathcal{S}(K)\times\mathcal{S}(G/K)$, which is totally disconnected by Lemma \ref{dc}. Therefore any connected subset of $\mathcal{S}(G)$ is contained in a fiber of this map, and by Proposition \ref{split} again, any such nonempty fiber is homeomorphic to $\Hom(D,Q)$ for some subgroup $D$ of $G/K$ and some quotient $Q$ of $K$. If $G$ is totally disconnected, so is $Q$, and therefore $\Hom(D,Q)$ is totally disconnected, so $\mathcal{S}(G)$ is totally disconnected. By duality, the same conclusion holds if $G$ is elliptic.
\end{proof}

Let $E_G$ denote the elliptic subgroup of a locally compact abelian group $G$.

\begin{thm}\label{tcon}
Let $G$ be a locally compact abelian group with $R(G)=0$. For any $H\in\mathcal{S}(G)$, set $N=H+G_0$ and $L=H\cap E_G$. Then the connected component of $H$ in $\mathcal{S}(G)$ consists of the subgroups $H'$ such that $H'+M=N$ and $H'\cap E_G=N$, and is homeomorphic to the compact group $$\Hom((N+E_G)/E_G,G_0/(G_0\cap L)).$$
\end{thm}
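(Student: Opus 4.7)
The plan is to apply Proposition~\ref{split} with $K=G_0$ and $\Omega=E_G$, using that $\mathcal{S}(E_G)$ and $\mathcal{S}(G/G_0)$ are both totally disconnected, so that the resulting split map detects connected components fibrewise, and then to identify the fibre over $\phi(H)$ as the claimed Hom group and check that it is connected.

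First I would verify the setup. Since $R(G)=0$, the group $G$ contains a compact open subgroup $M$; then $G_0\subseteq M$ so $G_0$ is compact, while $M\subseteq E_G$ (as $M$ is itself compact) forces $E_G$ to be open in $G$. Since $G_0\subseteq E_G$, the pair $(K,\Omega)=(G_0,E_G)$ satisfies the hypotheses of Proposition~\ref{split}. A short compactness argument, observing that any element of $G/M$ of finite order lifts to an element $x\in G$ with $\langle x\rangle+M$ a finite extension of $M$ and hence compact, shows that $E_G/M$ is exactly the torsion subgroup of the discrete group $G/M$; consequently $G/E_G$ is discrete and torsion-free.

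Next I would form the continuous map
\[
\phi:\mathcal{S}(G)\longrightarrow \mathcal{S}(E_G)\times\mathcal{S}(G/G_0),\qquad H'\longmapsto\bigl(H'\cap E_G,\,(H'+G_0)/G_0\bigr).
\]
By Lemma~\ref{ttdd}, $\mathcal{S}(E_G)$ is totally disconnected because $E_G$ is elliptic, and $\mathcal{S}(G/G_0)$ is totally disconnected because $G/G_0$ is totally disconnected. Hence the connected component of $H$ in $\mathcal{S}(G)$ is contained in the fibre $\phi^{-1}(L,N/G_0)$, which by Proposition~\ref{split} is homeomorphic to $\Hom\bigl((N+E_G)/E_G,\;G_0/(G_0\cap L)\bigr)$; in concrete terms it consists of those $H'$ with $H'\cap E_G=L$ and $H'+G_0=N$.

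Finally I would show that this fibre is itself connected, so that the inclusion above is an equality. The first argument $(N+E_G)/E_G$ is a subgroup of the discrete torsion-free group $G/E_G$, hence discrete and torsion-free; the second argument $G_0/(G_0\cap L)$ is a Hausdorff quotient of the compact connected abelian group $G_0$, hence compact and connected. Lemma~\ref{hnz} (with the degenerate cases, where one factor vanishes, giving the trivial one-point group) then yields that the Hom group is compact and connected, completing the identification with the connected component of $H$. The main care point is precisely this last step: what makes Proposition~\ref{split} give an equality rather than an inclusion is that $E_G$ is open (needed even for $i_{E_G}$ to be continuous and fit into Proposition~\ref{split}) and that $G/E_G$ is torsion-free (needed for Lemma~\ref{hnz} to deliver connectedness of the fibre).
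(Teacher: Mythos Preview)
Your proof is correct and follows essentially the same route as the paper: apply Proposition~\ref{split} with $K=G_0$ and $\Omega=E_G$, use Lemma~\ref{ttdd} to see the target is totally disconnected, and Lemma~\ref{hnz} to see the fibres are connected. You supply more detail than the paper does (why $E_G$ is open, why $G/E_G$ is torsion-free, and the degenerate cases of Lemma~\ref{hnz}), and you also silently correct the typos in the theorem statement by writing the fibre conditions as $H'+G_0=N$ and $H'\cap E_G=L$.
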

\begin{proof}
Set $K=G_0$, $\Omega=E_G$. Since $R(G)=0$, $K$ is compact and $\Omega$ is open, so we can apply Proposition \ref{split} to get a map to $\mathcal{E_G}\times\mathcal{S}(G/G_0)$ whose fibers have the desired form. By Lemma \ref{ttdd}, the target space $\mathcal{S}(E_G)\times\mathcal{S}(G/G_0)$ is totally disconnected, and by Lemma \ref{hnz} the fibers are connected.
\end{proof}

\begin{cor}\label{std}
Let $G$ be a locally compact abelian group. Equivalences:
\begin{itemize}
\item $\mathcal{S}(G)$ is totally disconnected;
\item $G$ is either elliptic or totally disconnected.
\end{itemize}
\end{cor}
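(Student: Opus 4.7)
The reverse direction ($G$ elliptic or totally disconnected $\Rightarrow\mathcal{S}(G)$ totally disconnected) is exactly Lemma~\ref{ttdd}, so the task is the forward direction, which I would prove by contraposition: assuming $G$ is neither elliptic ($E_G\neq G$) nor totally disconnected ($G_0\neq\{0\}$), I will exhibit a connected subset of $\mathcal{S}(G)$ with more than one point. Splitting on whether $R(G)\ge 1$ or $R(G)=0$ looks natural, since the structural tools available in the two regimes are different.

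When $R(G)\ge 1$, Proposition~\ref{rcd} lets me write $G=\R\times G'$, and I will use the map $\iota\colon \mathcal{S}(\R)\to\mathcal{S}(G)$, $L\mapsto L\times\{0\}$. A direct check against the definition of the Chabauty topology (using a product neighbourhood basis) shows $\iota$ is continuous and injective; since $\mathcal{S}(\R)$ is homeomorphic to a segment (recalled in the introduction) and $\iota(\{0\})\neq\iota(\R)$, the image is a connected subset of $\mathcal{S}(G)$ with at least two points.

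When $R(G)=0$, the connected component $G_0$ is contained in any compact open subgroup $M$ of $G$, hence is compact and contained in $E_G$. I will first record that $G/E_G$ is discrete (it is a quotient of $G/M$) and torsion-free (if $ng\in E_G$ then $\overline{\langle g\rangle}$ is a compact-by-finite extension, hence compact, so $g\in E_G$). Since $E_G\neq G$, pick $g\notin E_G$; torsion-freeness gives $\langle g\rangle\cap E_G=\{0\}$, and the discreteness of $G/E_G$ then yields that $H:=\langle g\rangle$ is a closed discrete copy of $\Z$ in $G$ (any cluster point of $(n_kg)$ in $G$ has a constant image in the discrete quotient, forcing $n_k$ to be eventually constant). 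Applying Theorem~\ref{tcon} with this $H$: $L=H\cap E_G=0$, so $G_0\cap L=0$, while $(H+E_G)/E_G\cong H\cong\Z$; the connected component of $H$ is therefore homeomorphic to $\Hom(\Z,G_0)\cong G_0$, a non-trivial compact connected group.

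The main technical point I expect to have to be careful about is the closedness and discreteness of $\langle g\rangle$; everything else is a structural unwinding. Once the cyclic subgroup is in place, Theorem~\ref{tcon} does all the work, and Lemma~\ref{hnz} (implicitly, via $\Hom(\Z,G_0)=G_0$) supplies the needed non-triviality of the fibre.
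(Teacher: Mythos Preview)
Your proposal is correct and follows essentially the same route as the paper: the reverse direction is Lemma~\ref{ttdd}, the case $R(G)\ge 1$ is handled via the embedded copy of $\mathcal{S}(\R)$ (which the paper dismisses as ``trivial''), and in the case $R(G)=0$ you locate a closed discrete copy of $\Z$ and invoke Theorem~\ref{tcon} to identify its connected component with $G_0$. The only difference is that you spell out the existence and discreteness of the cyclic subgroup in detail, whereas the paper simply asserts that a non-elliptic $G$ contains a closed copy of $\Z$.
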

\begin{proof}
The reverse implication is Lemma \ref{ttdd}. Conversely assume $\mathcal{S}(G)$ is totally disconnected.
It is trivial (and a particular case of several already proved results) that this implies $R(G)=0$. Since $G$ is not elliptic, it contains a closed subgroup $H$ isomorphic to $\Z$. Applying Theorem \ref{tcon}, we obtain that the connected component of $H$ in $\mathcal{S}(G)$ is homeomorphic to $G_0$, therefore is not reduced to a point since $G$ is not totally disconnected.
\end{proof}

\begin{defn}\label{prigid}
A point $x$ of a topological space $X$ is
\begin{itemize}
\item {\it path-rigid} if its path-connected component is reduced to $\{x\}$;
\item {\it rigid} if its connected component is reduced to $\{x\}$.
\end{itemize}
\end{defn}

Plainly, rigid implies path-rigid.
If $\{x\}$ is an intersection of clopen subsets, then $x$ is rigid; the converse holds when $X$ is compact, by \cite[II.4.4, Proposition~6]{BourbakiT}.

Let us denote by $D_{\text{tf}}$ the quotient of the discrete group 
$D$ by its torsion subgroup. 

\begin{cor}
Equivalences:
\begin{itemize}
\item[(i)] $H$ is rigid in $\mathcal{S}(G)$;
\item[(ii)] $H$ is path-rigid in $\mathcal{S}(G)$;
\item[(iii)] $R(G)=0$, and either $H$ is elliptic or $G/H$ is totally disconnected.
\end{itemize}
\end{cor}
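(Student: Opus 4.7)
\medskip\noindent
\textbf{Proof plan.} The implication (i)$\Rightarrow$(ii) is immediate. For both (iii)$\Rightarrow$(i) and the $R(G)=0$ half of the converse, my plan is to invoke Theorem \ref{tcon}: when $R(G)=0$, the connected component of $H$ is homeomorphic to $\Hom(D,Q)$ where $D=(H+G_0+E_G)/E_G$ and $Q=G_0/(G_0\cap H\cap E_G)$. Since $R(G)=0$ forces $G_0$ to be compact (as $G$ is compact-by-discrete), it is contained in $E_G$; hence $D=0$ iff $H\subseteq E_G$, i.e.\ $H$ is elliptic, and $Q=0$ iff $G_0\subseteq H$, i.e.\ $G/H$ is totally disconnected. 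Under (iii), at least one of $D,Q$ vanishes, so the component of $H$ reduces to $\{H\}$, giving rigidity. Conversely, if $R(G)=0$ but both conditions of (iii) fail, then $D\neq 0$ and $Q\neq 0$; noting that $D$ sits in the torsion-free discrete group $G/E_G$ while $Q$ is a quotient of the connected compact group $G_0$, Lemma \ref{hnz} makes $\Hom(D,Q)$ a non-trivial compact connected group, and Lemma \ref{ccntp} then produces a non-trivial path through any chosen basepoint, in particular through $H$ (translating in this topological group).

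The remaining case $R(G)\ge 1$ for (ii)$\Rightarrow$(iii) needs an explicit construction, since Theorem \ref{r1i} only yields connectedness (not path-connectedness) outside the compactly generated Lie case. Writing $G=\R\times G_1$, I consider the dilation flow $\phi_t(x,y)=(tx,y)$, which gives the continuous path $t\mapsto\phi_t(H)$ in $\mathcal{S}(G)$. Its stabilizer $S=\{t>0:\phi_t(H)=H\}$ is a closed subgroup of $(0,\infty)$ under multiplication, hence either discrete or the whole group. In the discrete case, a small interval around $t=1$ already gives a non-trivial path through $H$. In the dilation-invariant case, a short dilate-and-subtract argument applied to any $(x_0,y_0)\in H$ with $x_0\neq 0$ shows that either $H\supseteq\R\times\{0\}$ or $H\subseteq\{0\}\times G_1$. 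When $H\subseteq\{0\}\times G_1$, the family $H_t=(t\Z)\times H$ for $t\in(0,\infty]$, with the convention that $t=\infty$ yields $H$ itself (because $t\Z\to\{0\}$ in $\mathcal{S}(\R)$), is a continuous path with $H_t\neq H$ for all finite $t$, using continuity of the product map $\mathcal{S}(\R)\times\mathcal{S}(G_1)\to\mathcal{S}(G)$. The remaining case $H\supseteq\R\times\{0\}$ dualizes via Pontryagin-Chabauty (Theorem \ref{pc}) to $H^\curlywedge\subseteq\{0\}\times G_1^\vee$ inside $G^\vee=\R\times G_1^\vee$, reducing to the subcase just handled.

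The step I expect to be the main obstacle is this $R(G)\ge 1$ case: there is no general path-connectedness statement to call upon, so the argument must rely on the explicit dilation construction to handle the generic situation together with Pontryagin-Chabauty duality to cover the otherwise awkward ``$H\supseteq\R\times\{0\}$'' possibility. Everything else amounts to translating the conditions ``$H$ elliptic'' and ``$G/H$ totally disconnected'' into the vanishing of the factors $D$ and $Q$ appearing in Theorem \ref{tcon}.
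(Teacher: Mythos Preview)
Your proof is correct. For the $R(G)=0$ case it follows the paper's argument exactly: both identify the connected component via Theorem \ref{tcon} as $\Hom(D,Q)$ with $D$ torsion-free discrete and $Q$ compact connected, use Lemma \ref{hnz} to detect non-triviality, and Lemma \ref{ccntp} to upgrade connectedness to a non-trivial path.

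The difference is in handling $R(G)\ge 1$. The paper makes a forward reference to Proposition \ref{rppr} (in the next section), which asserts that when $k\ge 1$ no closed subgroup of $\R^k\times H$ is path-rigid; that proposition is proved via Lemma \ref{pathbase}, the dilation-limit argument showing $\tau_{\lambda^{-1}}(L)\to W\times L_1$ as $\lambda\to\infty$, after which one uses path-connectedness of $\mathcal{S}(\R^k)$. You instead supply a self-contained argument here, and your route is a genuine variant: rather than computing the limit of the dilation orbit, you case-split on its stabilizer. When the stabilizer is discrete the orbit itself is already a non-trivial path through $H$; in the dilation-invariant case you observe that $H$ must either lie in $\{0\}\times G_1$ (handled by the explicit family $t\Z\times H'$) or contain $\R\times\{0\}$ (handled by passing to $H^\curlywedge$ via Pontryagin--Chabauty duality and reducing to the previous subcase). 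The paper's approach is more uniform and yields the stronger conclusion that the path-component of $L$ contains all of $\mathcal{S}(\R^k)\times\{L_1\}$; yours is lighter in that it avoids the limit computation of Lemma \ref{pathbase}, at the cost of the case analysis and the appeal to Theorem \ref{pc}.
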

\begin{proof}
(i)$\Rightarrow$(ii) is trivial.

(ii)$\Rightarrow$(i) By the theorem, the connected component of $H$ is homeomorphic to a compact group. So we can apply Lemma \ref{ccntp}.

(i)$\Rightarrow$(iii). $R(G)=0$ follows from Proposition \ref{rppr}. Suppose that $H$ is not elliptic and $G/H$ is not totally disconnected. This implies that $N/M$ is not torsion and that $M/L$ is not totally disconnected. By Lemma \ref{hnz}, the connected group $\Hom((N/M)_{\text{tf}},(M/L)_0)$ is non-trivial, and the theorem allows to conclude.

(iii)$\Rightarrow$(i). If $H$ is elliptic, then $N/M$ is torsion; if $G/H$ is totally disconnected, then $M/L$ as well; in both cases 
$\Hom((N/M)_{\text{tf}},(M/L)_0)$ is trivial, and the theorem again allows to conclude.
\end{proof}


We now give a structure result for locally compact abelian groups for which $\pi_0(\mathcal{S}(G))$ is countable, or equivalently scattered. We first recall the following classical proposition, which has reappeared several times in the literature.

\begin{prop}\label{boyy}Let $G$ be a locally compact abelian group.
\begin{itemize}
\item[(1)] If $G$ is discrete, then $\mathcal{S}(G)$ is countable if and only if $G$ lies in an extension
$$1\to Z\to G\to A\to 1,$$
where $Z$ is a finitely generated abelian group, and $A\simeq \mathbf{C}_{m^\infty}$ for some $m\ge 1$.
\item[(2)] If $G$ is compact, then $\mathcal{S}(G)$ is countable if and only if $G$ lies in an extension
$$1\to P\to G\to T\to 1,$$
where $T$ is a compact Lie group and $P\simeq\Z_m$ for some $m\ge 1$. 
\end{itemize}
\end{prop}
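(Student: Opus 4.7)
The plan is to prove (1) directly and deduce (2) via Pontryagin--Chabauty duality (Theorem \ref{pc}) combined with the tables of Paragraph \ref{opd}: the dual of a finitely generated abelian group is compact Lie (torus-by-finite), while the dual of the Pr\"ufer group $\mathbf{C}_{m^\infty}$ is the adic group $\Z_m$; hence an extension $1\to Z\to G\to\mathbf{C}_{m^\infty}\to 1$ dualises to an extension $1\to\Z_m\to G^\vee\to Z^\vee\to 1$ of the required form.

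For the ``if'' direction of (1), I would parametrise subgroups: every subgroup $H\le G$ determines the pair $(R,B)=(H\cap Z,(H+Z)/Z)$, and for each realised pair the preimage in $\mathcal{S}(G)$ is in bijection with the set of complements of $Z/R$ inside $(H+Z)/R$, hence with $\Hom(B,Z/R)$. The indexing set $\mathcal{S}(Z)\times\mathcal{S}(\mathbf{C}_{m^\infty})$ is countable (subgroups of $\mathbf{C}_{m^\infty}=\bigoplus_{p\mid m}\mathbf{C}_{p^\infty}$ are indexed by finitely many elements of $\{0,1,\ldots,\infty\}$, and $\mathcal{S}(Z)$ is a classical countable lattice); and for each such pair, $B$ is torsion while the torsion subgroup of $Z/R$ is finite, so $\Hom(B,Z/R)$ is countable. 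Countability of $\mathcal{S}(G)$ follows.

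For the ``only if'' direction, I would first argue that $G$ itself is countable: an uncountable abelian group either has an uncountable torsion $p$-component for some $p$ (then $G[p]$ is an uncountable $\mathbf{F}_p$-vector space with uncountably many subspaces), or has infinite $\Q$-rank (then contains $\Z^{(\omega)}$, whose subgroup lattice is uncountable), or has finite $\Q$-rank but an uncountable divisible subquotient isomorphic to a $\Q$-vector space (and $\Q$ alone has $2^{\aleph_0}$ subgroups, since those containing $\Z$ correspond bijectively to subgroups of $\Q/\Z=\bigoplus_p\mathbf{C}_{p^\infty}$, which carries $2^{\aleph_0}$ subgroups). The same componentwise analysis applied to the torsion subgroup $T$ of $G$ forces $T$ to have finite $p$-rank for every $p$ and to be supported on only finitely many primes; such a countable torsion abelian group is Artinian, namely a finite direct product of Pr\"ufer groups and finite cyclic groups. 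Then $G/T$ is torsion-free of finite $\Q$-rank $r$; choose a full-rank free subgroup $F\simeq\Z^r$ and note that $(G/T)/F$ is torsion, inherits countability of its subgroup lattice, and therefore is Artinian by the same reasoning. Lifting back to $G$ one obtains a finitely generated subgroup $Z'\subseteq G$ with $G/Z'$ a finite direct product of Pr\"ufer groups; absorbing any finite cyclic summand that may appear into $Z'$, we may take $G/Z'\simeq\bigoplus_{p\in S}\mathbf{C}_{p^\infty}=\mathbf{C}_{m^\infty}$ for $m=\prod_{p\in S}p$.

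The main obstacle lies in the forward direction, specifically in the two ingredients: (a) $\mathcal{S}(G)$ countable forces $G$ countable, and (b) every torsion quotient of $G$ that appears must be Artinian; once these are in place, the extension structure falls out by routine piecing-together. The uniform strategy throughout is to exhibit, whenever a putative obstruction to the desired structure is present, a concrete uncountable family of subgroups (an infinite-rank free abelian group, an uncountable $\mathbf{F}_p$-vector space, or the Pr\"ufer-indexed lattice of subgroups of $\Q$) that contradicts the standing assumption of countability of $\mathcal{S}(G)$.
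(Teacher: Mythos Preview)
The paper does not give a proof of this proposition: it records it as classical, crediting (1) to Boyer \cite{Boy} and observing that (2) follows from (1) by Pontryagin duality. Your derivation of (2) from (1) matches the paper's remark exactly.

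For (1) you supply a direct argument where the paper offers none; the outline is sound, but two points deserve care. In the ``lifting back'' step, taking $Z'$ to be the full preimage of $F$ in $G$ yields an extension of $\Z^r$ by the Artinian group $T$, which is \emph{not} finitely generated when $T$ is infinite. The clean repair is to lift only a generating set of $F$ to elements $z_1,\dots,z_r\in G$ and set $Z'=\langle z_1,\dots,z_r\rangle$; then $G/Z'$ is an extension of $(G/T)/F$ by a quotient of $T$, hence Artinian-by-Artinian, hence Artinian, and one absorbs its finite part as you indicate. Second, after reducing $G/Z'$ to a finite direct sum of Pr\"ufer groups you still need to exclude a repeated prime: if $\mathbf{C}_{p^\infty}\times\mathbf{C}_{p^\infty}$ occurred as a quotient of $G$, the graphs $\{(x,ax):x\in\mathbf{C}_{p^\infty}\}$ indexed by $a\in\Z_p$ would give uncountably many subgroups, contrary to hypothesis. (Incidentally, the third case in your countability trichotomy---finite $\Q$-rank with an uncountable $\Q$-vector-space subquotient---is vacuous, since finite $\Q$-rank already forces $G/T$ to embed in $\Q^r$ and hence be countable; this does no harm.)
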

Note that in the statement above, $m$ is not assumed prime. As far as we know (1) was first proved by Boyer \cite{Boy} and (2) immediately follows by Pontryagin duality.

\begin{thm}\label{pizero}
Let $G$ be a compact-by-discrete locally compact abelian group. Equivalences:
\begin{itemize}
\item[(a)] $\pi_0(\mathcal{S}(G))$ is scattered.
\item[(b)] $\pi_0(\mathcal{S}(G))$ is countable. 
\item[(c)] $G$ has a compact open subgroup $M$ such that $\mathcal{S}(M)$ and $\mathcal{S}(G/M)$ are countable, and one of the following condition holds
\begin{itemize}
\item[(1)] $M$ is finite;
\item[(2)] $G/M$ is finite;
\item[(3)] $M$ is profinite and $G/M$ is torsion;
\item[(4)] $M$ is virtually connected and $G/M$ has finite torsion. 
\end{itemize}
\item[(d)] One of the following condition holds
\begin{itemize}
\item[(1')] $G$ is discrete with countably many subgroups (see Proposition \ref{boyy});
\item[(2')] $G$ is compact with countably many subgroups (dual to the previous case);
\item[(3')] $G=\Q_\ell\times\Z_m\times \mathbf{C}_{n^\infty}\times F$, where $F$ is finite, $\ell,m,n\ge 1$, $\ell$ is prime with $mn$.
\item[(4')] $G=K\times D\times F$ with $D$ torsion-free discrete, $K$ compact  connected, $F$ finite, and $\mathcal{S}(D)$ and $\mathcal{S}(K)$ are countable.
\end{itemize}
\end{itemize}
\end{thm}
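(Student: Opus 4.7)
Since $R(G)=0$, the compact open subgroup $M$ provided by Proposition \ref{rcd} satisfies $M\subseteq E_G$ and $G_0\subseteq M$, so $E_G$ is open and $G_0$ is compact. Theorem \ref{tcon} then asserts that the connected component of any $H\in\mathcal{S}(G)$ is exactly the fibre through $H$ of the continuous map
\[\psi: \mathcal{S}(G)\to \mathcal{S}(E_G)\times \mathcal{S}(G/G_0), \qquad H\mapsto\bigl(H\cap E_G,\,(H+G_0)/G_0\bigr).\]
Since $E_G$ is elliptic and $G/G_0$ is totally disconnected, Lemma \ref{ttdd} ensures that the target is totally disconnected, hence $\psi$ factors through an injection $\bar\psi:\pi_0(\mathcal{S}(G))\hookrightarrow \mathcal{S}(E_G)\times \mathcal{S}(G/G_0)$ whose two coordinate projections remain surjective (as $\mathcal{S}(G)\to\mathcal{S}(E_G)$ and $\mathcal{S}(G)\to\mathcal{S}(G/G_0)$ are both surjective). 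This yields the core reduction: $\pi_0(\mathcal{S}(G))$ is countable (respectively scattered) if and only if each of $\mathcal{S}(E_G)$ and $\mathcal{S}(G/G_0)$ is. The whole proof will be organized around this reduction.

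\textbf{Equivalence (a)$\Leftrightarrow$(b).} That countable implies scattered in a compact Hausdorff space is a Baire category argument. For the converse, one writes $E_G$ and $G/G_0$ as directed unions of compactly generated open subgroups to reduce to the second countable case; then $\mathcal{S}(E_G)$ and $\mathcal{S}(G/G_0)$ are compact metrizable and totally disconnected, and on such spaces scattered is equivalent to countable by the Cantor--Bendixson derivative. Combined with the core reduction, this gives (a)$\Leftrightarrow$(b).

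\textbf{Implications (d)$\Rightarrow$(c)$\Rightarrow$(b).} For (d)$\Rightarrow$(c), one verifies each case: take $M$ trivial in (1'), $M=G$ in (2'), $M=\Z_\ell\times\Z_m$ in (3'), $M=K$ in (4'), and invoke Proposition \ref{boyy} to check that $\mathcal{S}(M)$ and $\mathcal{S}(G/M)$ are countable. For (c)$\Rightarrow$(b), combine the natural exact sequences $0\to M\to E_G\to (G/M)_{\textrm{tors}}\to 0$ and $0\to M/G_0\to G/G_0\to G/M\to 0$ with the hypothesis that $\mathcal{S}(M)$ and $\mathcal{S}(G/M)$ are countable: in each of the four subcases (1)--(4) this forces $\mathcal{S}(E_G)$ and $\mathcal{S}(G/G_0)$ to be countable, whence (b) follows from the core reduction.

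\textbf{Implication (b)$\Rightarrow$(d): the main obstacle.} Assuming (b), so that $\mathcal{S}(E_G)$ and $\mathcal{S}(G/G_0)$ are countable, the sub-objects $\mathcal{S}(M)$ and $\mathcal{S}(G/M)$ are countable too. Proposition \ref{boyy} applied to the compact group $M$ forces $M$ to be an extension of a compact Lie group by some $\Z_m$, while applied to the discrete group $G/M$ it forces $G/M$ to be an extension of a Pr\"ufer group $\mathbf{C}_{n^\infty}$ by a finitely generated abelian group. Reassembling these two descriptions and pulling back to $G$, one must recover the four structural cases. The delicate point, and the main obstacle, is to decide, when $M$ has a non-trivial profinite part and $G/M$ has a non-trivial Pr\"ufer quotient, whether these combine into a $\Q_p$ direct factor. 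Lemma \ref{caradic} supplies precisely the intrinsic criterion: when the same prime $p$ occurs both in the profinite part of $M$ and in the Pr\"ufer quotient of $G/M$, the corresponding extension splits off a $\Q_p$-factor, leading to case (3'); the coprimality $\gcd(\ell,mn)=1$ records exactly which primes contribute to $\Q_\ell$ rather than to the purely compact $\Z_m$ or purely discrete $\mathbf{C}_{n^\infty}$ parts. When no prime coincides and $M$ has a non-trivial torus quotient, the torsion of $G/M$ is forced to be finite and a standard splitting argument (using divisibility of the torus and injectivity of $\R$ for the discrete torsion-free complement) yields case (4'); the two extremes $G_0=0$ and $E_G=M$ correspond to (1') and (2') respectively.
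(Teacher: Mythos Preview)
Your core reduction is correct and is a genuinely different organizing principle from the paper's. The paper works with an \emph{arbitrary} compact open subgroup $M$ and the map $\mathcal{S}(G)\to\mathcal{S}(M)\times\mathcal{S}(G/M)$, extracting from the scattered hypothesis that every fibre $\Hom(N/M,M/L)$ is virtually connected, and then running a case analysis on whether $\Z$ embeds in $G/M$ and in $M^\vee$. You instead use the canonical pair $(E_G,G_0)$, where by Theorem~\ref{tcon} the fibres are \emph{exactly} the connected components, so that $\pi_0(\mathcal{S}(G))$ embeds in $\mathcal{S}(E_G)\times\mathcal{S}(G/G_0)$. This is cleaner in principle: you trade the ``virtually connected fibre'' constraint for the simpler constraint ``$\mathcal{S}(E_G)$ and $\mathcal{S}(G/G_0)$ countable''.

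However, your (b)$\Rightarrow$(d) has a real gap: the final case analysis is wrong as written, and the key structural claim is unjustified. You write that ``the two extremes $G_0=0$ and $E_G=M$ correspond to (1') and (2')''. But $G_0=0$ only says $G$ is totally disconnected, not discrete (for instance $G=\Q_p$), and $E_G=M$ only says $G/M$ is torsion-free, not that $G$ is compact. The correct dichotomy, which your framework does support, is the four-way split on whether $G_0=0$ and whether $E_G=G$. The crux is the implication
\[
G_0\neq 0\ \text{and}\ \mathcal{S}(E_G)\ \text{countable}\ \Longrightarrow\ E_G\ \text{is compact},
\]
together with its Pontryagin dual ($G\neq E_G$ and $\mathcal{S}(G/G_0)$ countable $\Rightarrow$ $G_0$ is open). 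You assert the first (``the torsion of $G/M$ is forced to be finite'') but give no reason. It is true, but it needs an argument: if $E_G/M$ were infinite it would (being artinian, by Proposition~\ref{boyy}) admit $\mathbf{C}_{p^\infty}$ as a quotient, while $M$ admits $\R/\Z$ as a quotient since $G_0\neq 0$; the corresponding fibre of Proposition~\ref{split} then contains $\Hom(\mathbf{C}_{p^\infty},\R/\Z)\cong\Z_p$, which is uncountable. Once this and its dual are in place, the four cases collapse cleanly to (1')--(4'); in the totally-disconnected-and-elliptic case you still need Lemma~\ref{caraalocal} (not just Lemma~\ref{caradic}) to produce the direct product decomposition and then argue the coprimality and squarefreeness constraints separately from countability of $\mathcal{S}(G)$.
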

Note that in Cases (1),(2),(3), $\mathcal{S}(G)$ is countable itself. In (4), $\mathcal{S}(G)$ is not totally disconnected unless $G$ is compact or discrete.

We need some preliminary lemmas. 

\begin{lem}\label{caradic}
Let $G$ be a locally compact abelian group with a compact open subgroup $M$. Suppose that $G$ is divisible and torsion-free. Assume that $M$ is adic and $G/M$ is discrete artinian. Then $G$ is local.
\end{lem}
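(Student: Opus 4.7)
The plan is to decompose $G$ as a finite direct product of $\Q_p$-factors indexed by the primes appearing in $M$. I would first refine the structure of $M$ and $G/M$: since $G$ is torsion-free, $M$ has no finite summand, so $M \simeq \prod_{p \in S} \Z_p^{n_p}$ for a finite set of primes $S$ and integers $n_p \ge 1$; since $G/M$ is a quotient of the divisible group $G$, it is divisible, so the artinian group $G/M$ has no finite summand and decomposes as $G/M \simeq \bigoplus_{p \in T} \mathbf{C}_{p^\infty}^{m_p}$. I would then show $S = T$ and $n_p = m_p$ by applying the snake lemma to multiplication by $p$ on $0 \to M \to G \to G/M \to 0$: the hypotheses $G[p] = 0$ and $pG = G$ collapse the six-term sequence to an isomorphism $(G/M)[p] \simeq M/pM$, which reads $(\Z/p\Z)^{m_p} \simeq (\Z/p\Z)^{n_p}$ (with the convention $n_p = 0$ if $p \notin S$ and $m_p = 0$ if $p \notin T$).

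Next, for each $p \in S$ I would build a $\Q_p$-summand. Write $M = M_p \oplus M_{p'}$ with $M_p \simeq \Z_p^{n_p}$ and $M_{p'} = \prod_{q \in S \setminus \{p\}} M_q$. Since $G$ is $\sigma$-compact (an extension of the countable artinian group $G/M$ by the compact group $M$), divisible and torsion-free, multiplication by any positive integer is a continuous bijection $G \to G$, hence a homeomorphism by the open mapping theorem. I would construct a continuous injective homomorphism $\phi_p : \Q_p^{n_p} \to G$ as the direct limit of the maps $\phi_k : p^{-k} \Z_p^{n_p} \to G$, $p^{-k} z \mapsto (p^k)^{-1} \iota(z)$, where $\iota : \Z_p^{n_p} \xrightarrow{\sim} M_p \hookrightarrow G$ is the given inclusion; each $\phi_k$ is manifestly continuous, compatibility along the direct system is a formal check, and injectivity follows from torsion-freeness of $G$. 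Set $G_p = \phi_p(\Q_p^{n_p})$.

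The decisive technical observation is that, for every prime $q \in S \setminus \{p\}$, $p$ is a unit in $\Z_q$, so multiplication by $p$ is a homeomorphism of $M_{p'}$. Using this I would show $G = \sum_{p \in S} G_p$: given $g \in G$, the image $g + M$ decomposes in $G/M = \bigoplus_p (G/M)_p$, and it suffices to lift each $(G/M)_p$-component into $G_p$. For this, any $h \in G$ with $p^k h \in M$ admits a decomposition $p^k h = x + y \in M_p \oplus M_{p'}$, and setting $y' = p^{-k} y \in M_{p'}$ (well-defined by the observation) gives $p^k(h - y') = x \in M_p$, so $h - y' \in G_p$, while $y' \in M_{p'} \subset \sum_q G_q$ (because $M_q \subset G_q$). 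The direct-sum condition $G_p \cap \sum_{q \ne p} G_q = 0$ then follows by projecting to $G/M$ and using both the primary decomposition of $G/M$ and the independence $M_p \cap M_{p'} = 0$ inside $M$.

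Finally, the continuous homomorphism $\Phi := \sum_p \phi_p : \prod_p \Q_p^{n_p} \to G$ is bijective by the previous paragraph, and both source and target are $\sigma$-compact LCA groups, so it is a topological isomorphism by the open mapping theorem; this gives $G \simeq \prod_p \Q_p^{n_p}$, which is local. The main obstacle I expect is the decomposition $G = \sum_p G_p$ in the third paragraph: the whole argument hinges on the invertibility of $p$ on $M_{p'}$ as the mechanism for cleanly extracting the $p$-local part of an arbitrary element.
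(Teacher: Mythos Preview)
Your proof is correct and takes a genuinely different route from the paper's. The paper first reduces to a single prime: it asserts that the primary decomposition $M=\prod_{p\in S}M_p$ is canonical and that $G$ ``inherits such a decomposition'' (essentially by extending the idempotents $e_p\in\hat{\Z}$ from $M$ to $G$ via unique divisibility), so one may assume $M\simeq\Z_p^m$. It then extends the inclusion $\Z_p^m\hookrightarrow G$ to $f:\Q_p^m\to G$ by abstract injectivity of the divisible group $G$, observes that $f$ is continuous (being continuous on the open subgroup $\Z_p^m$) and injective, and finishes by noting that the image $f(\Q_p^m)$ is open and divisible, hence has a discrete direct complement $\Gamma$ in $G$; since $\Gamma$ is torsion-free and embeds into the artinian group $G/M$, it vanishes.

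Your argument avoids both the primary reduction and the ``divisible open subgroup splits off'' step. Instead you match $n_p=m_p$ via the snake lemma, build all the $\phi_p$ simultaneously, and prove surjectivity of $\Phi$ by the explicit lifting trick using that $p$ acts invertibly on $M_{p'}$; you then appeal to the open mapping theorem. This is more hands-on and self-contained; the paper's proof is shorter but leaves the primary decomposition of $G$ to the reader. One small remark: your directness sketch (``project to $G/M$ and use $M_p\cap M_{p'}=0$'') works, but when you unwind it you need to multiply by a suitable integer $N$ (coprime to $p$, divisible by enough powers of the other $q$'s) to force all the $g_q$ into $M_q$ before invoking $M_p\cap M_{p'}=0$; alternatively, injectivity of $\Phi$ follows immediately from its injectivity on $\prod_p\Z_p^{n_p}$ together with torsion-freeness of the source, which is perhaps cleaner than arguing directness of the $G_p$.
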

\begin{proof}
Let $S$ be the (finite) set of primes occurring in the canonical decomposition of $M$. Since this decomposition is canonical, it does not depend on $M$ and it follows that $G$ also inherits such a decomposition. Hence we can reduce to the case when $S=\{p\}$. So $M$ is isomorphic to $\Z_p^m$ for some $m$. So we have a continuous embedding $q:M\to\Q_p^m$. Denote by $i$ the inclusion of $M$ into $G$. Since $G$ is divisible as an abstract abelian group, there exists an abstract homomorphism $f:\Q_p^m$ such that $f\circ q=i$. Since the restriction of $f$ to the open subgroup $q(M)=\Z_p^m$ is continuous, $f$ is continuous as well. The kernel of $f$ has trivial intersection with $\Z_p^m$, hence is trivial, that is, $f$ is injective. Besides, the image $f(\Q_p^m)$ contains $M$, hence is open, and is divisible. So it has a direct factor $\Gamma$ in $G$, necessarily discrete. Now $\Gamma$ is torsion-free as a subgroup of $G$, and artinian as a subgroup of $G/M$. So $\Gamma=\{0\}$.
\end{proof}

\begin{lem}\label{caraalocal}
Let $G$ be a locally compact abelian group with a compact open subgroup $M$. Assume that $M$ is adic and $G/M$ is discrete artinian. Then we can write $G$ as $Q\times Z\times D\times F$ with $Q$ local, $Z$ torsion-free adic, $D$ discrete artinian, and $F$ finite.
\end{lem}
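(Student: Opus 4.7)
The strategy is to reduce to Lemma~\ref{caradic} by extracting from $G$ its maximal divisible subgroup $\mathcal{D}$, which will account for both the local factor $Q$ and the divisible torsion factor $D$; the remaining quotient is then easily shown to be torsion-free-adic-by-finite. The recurring technical point throughout is that each algebraic direct-sum decomposition must be compatible with the LCA topology, which is ensured by the adic structure of $M$ and the artinian structure of $G/M$.

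The first step is to reduce to the single-prime case. Both $M$ (adic) and $G/M$ (discrete artinian) have support on a finite set $S$ of primes, via $\Z_n=\prod_{p\mid n}\Z_p$ and $\mathbf{C}_{n^\infty}=\prod_{p\mid n}\mathbf{C}_{p^\infty}$. For $p\neq q$ multiplication by $p$ is an automorphism of the $q$-adic compact group $M_q$, so one checks $\mathrm{Ext}^1_{\text{LCA}}(U_p,M_q)=0$, where $U_p$ denotes the $p$-primary part of $G/M$; iterating the corresponding splittings over off-diagonal pairs of primes produces a topological product decomposition $G=\prod_{p\in S}G^{(p)}$, each $G^{(p)}$ having adic compact open $M_p$ and discrete artinian quotient $U_p$ both concentrated at $p$.

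Fix $p$ and write $G$ for $G^{(p)}$. Set $\mathcal{D}=\bigcap_{k\ge 0}p^kG$: each $p^kG$ contains the open subgroup $p^kM$ (a finite-index open subgroup of the adic $M$), so $p^kG$ is open, and $\mathcal{D}$ is closed. The torsion subgroup $T$ of $G$ has $T\cap M$ equal to the (finite) torsion of the adic $M$, so $T$ is discrete in $G$, hence closed; since $T/(T\cap M)$ embeds in the artinian $G/M$, $T$ is itself discrete artinian and decomposes as $T=D\oplus F$ with $D\cong\mathbf{C}_{p^\infty}^c$ its maximal divisible subgroup and $F$ finite. One then identifies $\mathcal{D}=Q\oplus D$ where $Q$ is the ``torsion-free divisible'' part; $Q$ is closed, its compact open $Q\cap M$ is a pure $\Z_p$-submodule of the adic $M$ and so is adic, and $Q/(Q\cap M)$ is divisible artinian. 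By Lemma~\ref{caradic}, $Q$ is local, that is $Q\cong\Q_p^a$.

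It remains to exhibit $\mathcal{D}$ as a topological direct factor. The projection $\pi(\mathcal{D})\subset G/M$ is the maximal divisible subgroup of the discrete artinian $G/M$ and is therefore a direct summand, say $G/M=\pi(\mathcal{D})\oplus\bar{F}$ with $\bar{F}$ finite. Writing the adic group as $M=(Q\cap M)\oplus Z$ with $Z\cong\Z_p^b$ a $\Z_p$-module complement, and lifting $\bar{F}$ to a finite subgroup $F\subset G$ (using that the extension of a finite $p$-group by the adic $M$ splits after adjusting lifts in the torsion-free adic direction $Z$), one verifies directly that $\mathcal{D}\cap(Z\oplus F)=0$ and $\mathcal{D}+Z+F=G$; the open mapping theorem applied to the continuous surjection $\mathcal{D}\times Z\times F\to G$ then gives $G=\mathcal{D}\oplus Z\oplus F=Q\oplus Z\oplus D\oplus F$ topologically. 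Taking the product over $p\in S$ yields the desired global decomposition. The principal obstacle is this last step, namely the lifting of $\bar{F}$ to a closed subgroup of $G$ and the verification that the algebraic splittings assemble into an LCA direct-sum; this is where one leverages in an essential way both the adic structure of $M$ (its finite torsion direct-summing off) and the divisibility in the artinian quotient.
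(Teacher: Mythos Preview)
Your final step has a genuine gap. You claim that $\bar F$ lifts to a finite subgroup $F\subset G$, i.e.\ that the extension of $\bar F$ by $M$ splits ``after adjusting lifts in the torsion-free adic direction $Z$''. This fails already for $G=\Z_p$ with $M=p\Z_p$: here $\mathcal D=0$, $Q=0$, $Z=M=p\Z_p$, $\bar F=\Z/p$, and the torsion-free group $\Z_p$ contains no finite subgroup mapping onto $\Z/p$. More generally $\textnormal{Ext}^1(\Z/p^n,\Z_p)\cong\Z/p^n\neq 0$, so no adjustment inside $M$ can kill the obstruction. The underlying problem is that your $Z$, chosen as a complement to $Q\cap M$ \emph{inside} $M$, is too small: in the correct decomposition the torsion-free adic factor need not be contained in $M$ at all. (A smaller issue: you assert that $\mathcal D=\bigcap_k p^kG$ is the maximal divisible subgroup, but the first Ulm subgroup is not divisible for general abelian groups, so this also requires justification in the present setting.)

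The paper circumvents the lifting problem via Pontryagin duality. It first splits off only the \emph{discrete} divisible torsion $D$; being divisible (hence injective) and discrete, $D$ is automatically a topological direct summand. It then dualizes the complement $G_1$ and repeats the same trick: the dual of the sought-for $Z$ is again discrete divisible, so it splits off by injectivity. What remains, $G_2$, has finite torsion in both itself and its dual, which forces $G_2$ to be finite-by-(torsion-free divisible), and Lemma~\ref{caradic} then identifies the divisible piece as local. The point you are missing is that splitting off a discrete divisible subgroup is easy, while splitting off a compact adic one is not; duality converts the second problem into the first.
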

\begin{proof}
We can suppose that $M$ is torsion-free. Let $T$ be the torsion subgroup in $G$. Since $T\cap M=\{0\}$, we know that $T$ is a discrete subgroup. Moreover $T$ embeds into $G/M$, so $T$ is Artinian. Let $S$ be the divisible part of $T$, which has finite index in $T$. Since $T_d\cap M=\{0\}$, $D$ has a direct factor $G_1$ in $G$ containing $M$. Working similarly in the Pontryagin dual of $G_1$, we write $G=Z\times D\times G_2$ where $Z$ is torsion-free adic, and both $G_2$ and its Pontryagin dual have finite torsion. Since the Pontryagin dual of $G_2$ has finite torsion, the index $[G_2:nG_2]$ is bounded independently of $n$. So the divisible part $\bigcap_n n!G_2$ of $G_2$ has finite index and we can write $G_2=F\times G_3$ with $F$ finite, $G_3$ divisible and torsion-free (and satisfying the hypotheses of the lemma). We have $G=Z\times D\times F\times G_3$. By Lemma \ref{caradic}, $G_3$ is local.
\end{proof}

\begin{proof}[Proof of Theorem \ref{pizero}]
\begin{itemize}
\item (b)$\Rightarrow$(a) is clear.

\item (c)$\Rightarrow$(b). First note that the result is clear if (1)  or (2) holds, since then $\mathcal{S}(G)$ is countable.

Consider the natural map $\mathcal{S}(G)\to\mathcal{S}(M)\times\mathcal{S}(G/M)$. By Proposition \ref{split}, each non-empty fiber is of the form $\Hom(N,Q)$ with $N\le G/M$ and $Q\le M$. If (3) holds, then $Q$ has finite torsion $Q_t$ and the divisible part $N_d$ of $N$ has finite index, so $\Hom(N,Q)=\Hom(N/N_d,Q_t)$ is finite. So the fibers are finite and $\mathcal{S}(G)$ is countable.

\item (a)$\Rightarrow$(c). Suppose that $\pi_0(\mathcal{S}(G))$ is scattered. Then $\pi_0(\mathcal{S}(M))$ and $\pi_0(\mathcal{S}(G/M))$ are scattered as well, hence countable (Lemma \ref{dc}). Moreover, for every fiber $\mathcal{F}$ of the natural map to $\mathcal{S}(M)\times\mathcal{S}(G/M)$, we have $\pi_0(\mathcal{F})$ scattered. By Proposition \ref{split}, these fibers are homeomorphic to compact groups, so they have finitely many components. Therefore for every closed subgroups $0\le L\le M\le N\le G$, we have $\Hom(N/M,M/L)$ virtually connected. 

\begin{itemize}\item Suppose $\Z$ embeds into $G/M$. Then taking $N$ such that $N/M\simeq \Z$, we have $\Hom(N/M,M)=M$ virtually connected.
\item In a dual way, if $M$ is not profinite, that is, if $\Z$ does not embed into $M^\vee$, then $(G/M)^\vee$ is virtually connected, that is, $G/M$ has finite torsion.  
\end{itemize}
Therefore one of the following holds
\begin{itemize}
\item $M$ is finite (Case (1))
\item $M$ is infinite. \begin{itemize} \item $G/M$ is not torsion. Then $M$ is virtually connected; as it is infinite, it is not profinite, and hence $G/M$ has finite torsion. This is Case (4).\item $G/M$ is torsion. We again discuss.\begin{itemize}\item $M^\vee$ is not torsion. Then $(G/M)^\vee$ is virtually connected, so $G/M$ has finite torsion, so is finite (Case (2)).\item $M^\vee$ is torsion, so $M$ is profinite (Case (3)).\end{itemize}\end{itemize}
\end{itemize}

\item (c)$\Rightarrow$(d). Under the assumption that $\mathcal{S}(M)$ and $\mathcal{S}(G/M)$ are countable, let us prove (i)$\Rightarrow$(i') for $i=1\dots 4$. For $i=1,2$ there is nothing to prove. 

Suppose (4). We can suppose that $M$ is connected. Hence it is divisible, so has a direct factor $H$ in $G$. Then $H$ is discrete and finite-by-(torsion-free). So $H^\vee$ is connected-by-finite, and again the connected component, by the same argument, has a direct factor. This means that $H$ is the direct product of a finite group by a torsion-free group. 

Suppose (3). Since $G/M$ is discrete torsion and $\mathcal{S}(G/M)$ is countable, $G/M$ is artinian of the form $\mathbf{C}_{n^\infty}\times F_1$ with $F_1$ finite and $n$ square-free \cite{Boy}. Dually, $G/M$ is adic of the form $\Z_m\times F_2$ with $F_2$ finite and $m$ square-free. By Lemma \ref{caraalocal}, we can write $G=Q\times Z\times D\times F$ with $Q$ local, $Z$ torsion-free adic, $D$ discrete artinian, and $F$ finite. Because of the special form of $M$ and $G/M$, We can write $Q$, $Z$ and $D$ in the desired form.\qedhere 
\end{itemize}\end{proof}

\section{Connectedness when $R\ge 1$}\label{seco}

\begin{lem}\label{pathbase}
Consider a group $G=\R^k\times H$ with $k\ge 1$, where the locally compact group $H$ has a compact open subgroup (but need not be abelian). Let $L$ be a closed subgroup of $G$. Denote by $L_1$ the closure of the projection of $L$ on $H$. We consider the automorphism $\tau_\lambda(x,h)=(\lambda x,h)$. Consider the path $\tau_{\lambda^{-1}}(L)$ for $\lambda\in[1,+\infty[$. Then for some vector subspace $W$ of $\R^k$, $\tau_{\lambda^{-1}}(L)$ tends to $W\times L_1$ when $\lambda\to +\infty$. 
\end{lem}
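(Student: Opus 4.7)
The plan is to exhibit $W$ as follows. Fix a compact open subgroup $K$ of $H$, and set $L_K := L \cap (\R^k \times K)$. The projection $\pi_{\R^k} \colon L_K \to \R^k$ has compact kernel $C := L \cap (\{0\} \times K)$, and since $L_K$ is $\sigma$-compact, the open mapping theorem gives $L_K / C \cong N$ topologically, where $N := \pi_{\R^k}(L_K)$ is a closed subgroup of $\R^k$. Writing $N = V \oplus \Lambda$ with $V$ the identity component (a vector subspace) and $\Lambda$ discrete (free abelian of rank $s$), set $W := V + \mathrm{span}_\R(\Lambda)$, a vector subspace of $\R^k$. A direct check shows $\lambda^{-1} N \to W$ in $\mathcal{S}(\R^k)$ as $\lambda \to +\infty$, since $V$ is scale-invariant while $\lambda^{-1} \Lambda$ becomes Chabauty-dense in $\mathrm{span}_\R(\Lambda)$.

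For the upper bound inclusion, I will show that every subsequential Chabauty limit of $\tau_{\lambda^{-1}}(L)$ lies in $W \times L_1$. If $(y_n / \lambda_n, h_n) \in \tau_{\lambda_n^{-1}}(L)$ converges to $(x, y)$, then $h_n \in \pi_H(L)$ with $h_n \to y$, so $y \in L_1$. For the $\R^k$-coordinate, fix $n_0$ large; then for $n$ sufficiently large, $h_n h_{n_0}^{-1}$ lies in the open neighborhood $K$ of $1_H$, whence $(y_n - y_{n_0}, h_n h_{n_0}^{-1}) = (y_n, h_n)(y_{n_0}, h_{n_0})^{-1}$ lies in $L_K$, so $y_n - y_{n_0} \in N$. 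Therefore $y_n / \lambda_n \in y_{n_0} / \lambda_n + \lambda_n^{-1} N$; since $y_{n_0} / \lambda_n \to 0$ and $\lambda_n^{-1} N \to W$ in Chabauty, the limit $x$ belongs to $W$.

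The lower bound is the heart of the argument; for clarity I describe it assuming $H$ abelian (in general one restricts to the subgroup of $K$ normalizing $C$, which automatically contains $\pi_H(L_K)$ since $C$ is the kernel of $\pi_{\R^k}$ on $L_K$). Given $(w, h) \in W \times L_1$ and a basic neighborhood $B(w, \eps) \times V_h$, pick $(y_0, h_0) \in L$ with $h_0 \in V_h$ (using $L_1 = \overline{\pi_H(L)}$) and shrink $V_h$ so that $h_0 + K'' \subseteq V_h$ for some open neighborhood $K''$ of $0$ contained in $K$. It suffices to produce $(y, k) \in L_K$ with $k \in K''$ and $y / \lambda$ close to $w$, since then $(y_0 + y, h_0 + k) \in L$ realizes the required approximation. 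Decomposing $w = v + \sum_{i=1}^s c_i \ell_i$ with $\ell_1, \ldots, \ell_s$ a $\Z$-basis of $\Lambda$, I will take
\[ y := \lambda v + \delta^V + \sum_{i=1}^s (\lfloor \lambda c_i \rfloor + \delta_i) \ell_i \in N, \]
and choose corrections $\delta^V \in V$, $\delta_i \in \Z$ uniformly bounded in $\lambda$ so that the associated lift $(y, k) \in L_K$ can be taken with $k \in K''$.

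The main obstacle is the boundedness of these corrections, which will guarantee $|y - \lambda w| = O(1)$ and hence $y / \lambda \to w$. This will follow from compactness of the closures $\overline{\sigma(V)}$ and $\overline{\sigma(\Lambda)}$ in the compact quotient $K / C$ (where $\sigma \colon N \to K / C$ is the coset map induced by $L_K \to K$), together with a standard covering argument: for any neighborhood $\bar U$ of $0$ in $K / C$, finitely many translates of $\bar U$ cover the compact set $\overline{\sigma(V)}$, so there exists $R_V$ such that $\sigma(B_V(0, R_V))$ is $\bar U$-dense in $\overline{\sigma(V)}$; similarly an integer $D$ exists such that $\{\sum_i \delta_i \sigma(\ell_i) : |\delta_i| \le D\}$ is $\bar U$-dense in $\overline{\sigma(\Lambda)}$. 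Applying both with $\bar U$ the image of $K''$ in $K / C$ yields the required bounded $\delta^V$ and $\delta_i$, placing $\sigma(y)$ inside $\bar U$ and thereby completing the construction. Combining the two inclusions gives the claimed convergence $\tau_{\lambda^{-1}}(L) \to W \times L_1$.
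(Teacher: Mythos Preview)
Your proof is correct, and you and the paper identify the same subspace $W$, namely the $\R$-span of $\pi_{\R^k}(L\cap(\R^k\times K))$. The upper-bound arguments are close in spirit, though you argue directly with the subtraction trick $(y_n-y_{n_0},h_nh_{n_0}^{-1})\in L_K$, while the paper first observes that any accumulation point $M$ satisfies $\{0\}\times L_1\subset M\subset\R^k\times L_1$ and hence automatically has the form $F\times L_1$ for some closed $F\le\R^k$.

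The substantive difference is the lower bound. The paper first reduces to $H$ compact by intersecting everything with the clopen set $\R^k\times K$ (which commutes with taking Chabauty limits), and then argues as follows: for $v\in W$, cocompactness of $N$ in $W$ gives $(w_\lambda,h_\lambda)\in L$ with $\lambda^{-1}w_\lambda\to v$; \emph{compactness of $H$} then yields a cluster point $(v,h')\in M$; finally, since $(0,h')\in\{0\}\times L_1\subset M$ and $M$ is a subgroup, one gets $(v,e)=(v,h')(0,h')^{-1}\in M$. Thus the paper never controls the $H$-coordinate explicitly: it lets compactness produce \emph{some} $h'$ and then cancels it using the group structure of the limit. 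Your route is instead constructive: the homomorphism $\sigma:N\to K/C$ and a finite-cover argument in the compact group $\overline{\sigma(N)}$ let you manufacture, for each large $\lambda$, an element of $\tau_{\lambda^{-1}}(L)$ in any prescribed neighbourhood of $(w,h)$. This is more laborious but gives an explicit approximation without passing to subsequences, and your normalizer remark does handle the non-abelian case (note that $\sigma(N)$ is automatically abelian, so the covering argument goes through unchanged inside $\overline{\sigma(N)}$). One small point: combining the $V$-correction and the $\Lambda$-correction only places $\sigma(y)$ in $\bar U+\bar U$, not $\bar U$; you should run the covering argument with a symmetric $\bar U'$ satisfying $\bar U'+\bar U'\subset\bar U$.
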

\begin{proof}
Let $M$ denote any accumulation point of $(\tau_{\lambda^{-1}}(L))$. It is straightforward that $\{0\}\times L_1\subset M\subset\R\times L_1$.

Let us begin by the case when $H$ is compact. Let $W$ be the vector space generated by the projection of $L$ on $\R^k$. Obviously $M\subset W\times L_1$. Take $v\in W$ and $\lambda_i\to +\infty$. Then $\lambda_iv$ is at bounded distance, say $\le r$, of some element in the projection of $L$. That is, we can find $(w_i,h_i)$ in $L$ with $\|w_i-\lambda_iv\|\le r$. So $\|\lambda_i^{-1}w_i-v\|\to 0$. So $\tau_{\lambda_i^{-1}}(w_i,h_i)$ accumulates to $(v,h)$ for some $h\in L_1$, by compactness of $H$. Therefore, since we know that $\{0\}\times L_1$ is contained in $M$, we deduce that $(v,0)$ is contained in $M$. Since $v$ is arbitrary, $M=W\times L_1$.

In general, $H$ has a compact open subgroup $K$, and by the above, if $W$ is the vector space generated by the projection of $L\cap (\R^k\times K)$ on $\R^k$, then we obtain that $\tau_{\lambda^{-1}}(L)\cap (\R^k\times K)\to W\times (L_1\cap K)$. As $M$ has to be of the form $F\times L_1$ for some closed subgroup $F$, we deduce that $F=W$ and 
$\tau_{\lambda^{-1}}(L)$ tends to $W\times L_1$.
\end{proof}

\begin{prop}\label{rppr}
Let $G=\R^k\times H$ be a locally compact group, where $H$ has a compact open subgroup. Let $L$ be a closed subgroup of $G$ and let $L_1$ be the closure of the projection of $L$ on $H$.
Then the path-connected component of $L$ in $\mathcal{S}(G)$ contains $\mathcal{S}(\R^k)\times \{L_1\}$. In particular, if $k\ge 1$, then $L$ is not path-rigid (see Definition \ref{prigid}).\label{pba}
\end{prop}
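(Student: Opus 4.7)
The plan is to combine Lemma \ref{pathbase} with a continuous embedding of $\mathcal{S}(\R^k)$ into $\mathcal{S}(G)$ carrying each $V$ to $V\times L_1$. Lemma \ref{pathbase} will produce a path in $\mathcal{S}(G)$ from $L$ to one specific element of the target set $\mathcal{S}(\R^k)\times\{L_1\}$; path-connectedness of that entire set inside $\mathcal{S}(G)$ then delivers the conclusion.

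First I would invoke Lemma \ref{pathbase} directly: since $\lambda\mapsto\tau_\lambda$ is a continuous one-parameter family of automorphisms of $G$, the map $[1,+\infty)\to\mathcal{S}(G)$ defined by $\lambda\mapsto\tau_{\lambda^{-1}}(L)$ is continuous, and the lemma asserts that it extends continuously to $+\infty$ with value $W\times L_1$ for some vector subspace $W\subseteq\R^k$. Reparametrizing $[1,+\infty]$ as $[0,1]$ gives an honest path in $\mathcal{S}(G)$ from $L$ to $W\times L_1$.

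Next I would verify that the map $\iota\colon\mathcal{S}(\R^k)\to\mathcal{S}(G)$, $V\mapsto V\times L_1$, is continuous. If $V_n\to V$ in $\mathcal{S}(\R^k)$ then for any $(x,h)\in V\times L_1$ and any product neighbourhood $U\times U'$ of $(x,h)$ one has eventually $V_n\cap U\neq\emptyset$ and $L_1\cap U'\neq\emptyset$; and if $(x,h)\notin V\times L_1$ then either $x\notin V$ (so eventually $V_n$ is disjoint from a suitable neighbourhood of $x$) or $h\notin L_1$ (yielding a product neighbourhood of $(x,h)$ disjoint from every $V_n\times L_1$). Combined with the path-connectedness of $\mathcal{S}(\R^k)$---for $k=1$ a segment, for $k\ge 2$ a consequence of Kloeckner's \cite{Klo} proof that it is simply connected, or alternatively of the elementary observation that any closed subgroup $V\oplus\Lambda\subset\R^k$ is joined to the subspace $V$ by $t\mapsto V+t\Lambda$, $t\in[1,+\infty]$, and that the Grassmannian is path-connected---this shows that $\mathcal{S}(\R^k)\times\{L_1\}=\iota(\mathcal{S}(\R^k))$ is a path-connected subset of $\mathcal{S}(G)$.

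Assembling, the path-component of $L$ in $\mathcal{S}(G)$ contains $W\times L_1$ by the first step, and hence all of $\mathcal{S}(\R^k)\times\{L_1\}$ by the second step. For the ``in particular'' clause, when $k\ge 1$ the set $\mathcal{S}(\R^k)\times\{L_1\}$ has more than one element (for instance $\{0\}\times L_1$ and $\R^k\times L_1$ are distinct), so the path-component of $L$ is not reduced to $\{L\}$ and hence $L$ is not path-rigid. The essential input is Lemma \ref{pathbase}; the remaining ingredients (Chabauty-continuity of $\iota$ and path-connectedness of $\mathcal{S}(\R^k)$) are standard, so the real work is already done in proving the lemma.
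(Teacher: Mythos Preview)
Your proposal is correct and follows exactly the paper's approach: the paper's proof reads in its entirety ``This follows from Lemma \ref{pathbase}, and path-connectedness of $\mathcal{S}(\R^k)$ (which is an easy exercise),'' and you have simply supplied the details behind these two ingredients (the path from $L$ to $W\times L_1$, the continuous embedding $V\mapsto V\times L_1$, and the path-connectedness of $\mathcal{S}(\R^k)$).
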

\begin{proof}
This follows from Lemma \ref{pathbase}, and path-connectedness of $\mathcal{S}(\R^k)$ (which is an easy exercise).
\end{proof}

Consider a group $G=\R^k\times H$ and $L$ a closed subgroup of $G$. Denote by $M$ the closure of the projection of $L$ on $H$. We consider the automorphism $\tau_\lambda(x,h)=(\lambda x,h)$. Consider the path $\tau_{\lambda^{-1}}(L)$ for $\lambda\in[1,+\infty[$.

Define a locally compact abelian group as {\it circular} if it is discrete and has an injective homomorphism into $\R/\Z$ (for instance, $\R/\Z$ with the discrete topology is circular). Define a locally compact abelian group 
$G$ to be {\it polycircular} if it has a composition series
$$\{0\}=G_0\subset G_1\subset\dots\subset G_n=G$$ such that each $G_i/G_{i-1}$ is circular, and {\it metacircular} if it can be written as $\R^k\times H$, where $H$ has a compact open subgroup $M$ such that both $H/M$ and $M^\vee$ are polycircular. For instance, every compactly generated Lie group is metacircular. Also, $\Q_p$ is metacircular, but the infinite direct product $(\Z/2\Z)^\Z$ is not.

\begin{lem}\label{boutdechemin}
Let $H$ denote a locally compact abelian group such that either $H$ or $H^\vee$ is circular. 
In $\mathcal{S}(\R^k\times H)$, if $k\ge 1$, we can join $\{0\}\times H$ to $\{0\}$ by a path.
\end{lem}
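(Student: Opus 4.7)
The statement reduces to the case $k=1$: the inclusion $\R \times H \hookrightarrow \R^k \times H$ into the first coordinate is a closed embedding, so the induced map $\mathcal{S}(\R \times H) \to \mathcal{S}(\R^k \times H)$ is continuous and sends $\{0\}$ to $\{0\}$ and $\{0\} \times H$ to $\{0\} \times H$. I will construct a path in $\mathcal{S}(\R \times H)$ from $\{0\} \times H$ to $\{0\}$ by concatenating two legs, using the full group $\R \times H$ as a waypoint.

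\textbf{First leg ($\{0\}\times H \to \R\times H$).} For $s\in(0,+\infty]$ (with $s=+\infty$ meaning $\{0\}\times H$), set $P_s = s\Z \times H$. Elementary Chabauty estimates give $s\Z \to \R$ as $s\to 0^+$ and $s\Z\to\{0\}$ as $s\to+\infty$, whence $P_s\to \R\times H$ and $P_s\to\{0\}\times H$ respectively; after a reparametrization on $[0,1]$ this is a continuous path. This leg uses no hypothesis on $H$.

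\textbf{Second leg, case $H$ circular.} Let $\phi:H\hookrightarrow \R/\Z$ denote the embedding, and define
$$\Lambda = \{(x,h)\in\R\times H : x+\Z = \phi(h)\},$$
the kernel of the continuous homomorphism $(x,h)\mapsto (x+\Z)-\phi(h)$. With $\tau_s(x,h)=(sx,h)$, I claim that $s\mapsto \tau_s(\Lambda)$ is a continuous path in $\mathcal{S}(\R\times H)$ from $\R\times H$ (at $s\to 0^+$) to $\{0\}$ (at $s=+\infty$). Fixing a set-theoretic lift $\tilde\phi(h)\in[0,1)$, the fiber of $\tau_s(\Lambda)$ over $h$ is $(s\tilde\phi(h)+s\Z)\times\{h\}$. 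As $s\to 0^+$ these cosets become dense in $\R$, so $\tau_s(\Lambda)\to\R\times H$. As $s\to+\infty$, the fiber over $h=0$ is $s\Z\to\{0\}$, while for $h\ne 0$ one has $\tilde\phi(h)\in(0,1)$ and the shortest vector in the fiber has norm $s\min(\tilde\phi(h),1-\tilde\phi(h))\to\infty$ and hence escapes; thus $\tau_s(\Lambda)\to\{0\}$.

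\textbf{Second leg, case $H^\vee$ circular.} Apply the previous construction with $H^\vee$ in place of $H$: this produces a continuous path in $\mathcal{S}(\R\times H^\vee)$ from $\R\times H^\vee$ to $\{0\}$. By Pontryagin–Chabauty duality (Theorem~\ref{pc}) the orthogonal map is a homeomorphism $\mathcal{S}(\R\times H^\vee)\to \mathcal{S}(\R\times H)$, under which $\R\times H^\vee \mapsto \{0\}$ and $\{0\}\mapsto \R\times H$. The image of the above path is therefore a continuous path in $\mathcal{S}(\R\times H)$ from $\{0\}$ to $\R\times H$; traversed in reverse, this is the desired second leg. The crux of the whole argument is the Case 1 limit analysis of $\tau_s(\Lambda)$: the uniform positive lower bound $\min(\tilde\phi(h),1-\tilde\phi(h))>0$ on nontrivial fibers at $s\to+\infty$, afforded by the injectivity of $\phi$, is the only place the circularity hypothesis enters, and is what breaks the symmetry between the $s\to 0^+$ and $s\to+\infty$ limits.
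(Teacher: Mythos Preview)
Your proof is correct and follows essentially the same strategy as the paper's: both use the pullback $\Lambda$ (the paper's $L$) of the graph of the embedding $H\hookrightarrow\R/\Z$ and scale it by $\tau_s$ to produce a path from $\R\times H$ to $\{0\}$, and both invoke Pontryagin--Chabauty duality for the case where $H^\vee$ is circular. Your write-up is in fact a bit more careful than the paper's: you make explicit the ``first leg'' $s\Z\times H$ joining $\{0\}\times H$ to $\R\times H$, which the paper leaves implicit, and your reduction to $k=1$ via the closed embedding $\R\times H\hookrightarrow\R^k\times H$ is cleaner than the paper's terse concatenation remark. One cosmetic point: the phrase ``uniform positive lower bound'' for $\min(\tilde\phi(h),1-\tilde\phi(h))$ is slightly misleading, since this quantity is not bounded below uniformly over all $h\neq 0$; what you actually use (and what suffices) is that it is positive for each fixed $h\neq 0$, which is enough because compact sets in $\R\times H$ meet only finitely many $H$-fibers.
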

\begin{proof}It is enough to prove the lemma for $k=1$, since then we can join $\R^{\ell}\times H$ to $\R^{\ell-1}\times H$ by a path and concatenate all those paths.
\begin{itemize}
\item $H$ has a continuous injection $\psi$ to $\R/\Z$. Consider in $\R/\Z\times H$ the graph of $\psi$ (upside down). Let $L$ be its inverse image in $\R\times H$. On $G=\R\times H$, consider the automorphism $\tau_\lambda(x,h)=(\lambda x,h)$. 

Consider the path $\tau_{\lambda}(L)$ for $\lambda\in]0,+\infty[$. 
Clearly, when $\lambda\to 0$, it tends to $\R\times H$.
When $\lambda$ tends to $+\infty$, we claim that it tends to $\{0\}$. 
Indeed consider $(\lambda_i)\to\infty$, and $(t_i,h_i)\to (t,h)$ with $(t_i,h_i)\in\tau_{\lambda_i}(L)$. This means that $(\lambda_i^{-1}t_i,h_i)\in L$. So $\psi(h_i)=\lambda_i^{-1}t_i$ in $\R/\Z$. 
As $(t_i,h_i)\to (t,h)$ and $\lambda_i^-1\to 0$, we have $\psi(h_i)\to\psi(h)$ and $\lambda_i^{-1}t_i\to 0$. So $\psi(h)=0$ in $\R/\Z$. By injectivity of $\psi$, we deduce that $h=0$.
We obtain that in restriction to the clopen subset $\R\times (H-\{0\})$, the path $\tau_{\lambda}(L)$ tends to $\emptyset$ when $\lambda$ tends to $+\infty$. In restriction to $\R\times\{0\}$, this is constantly $\{0\}$. So the claim is proved.

\item $H^\vee$ is discrete and injects into $\R/\Z$. By Pontryagin-Chabauty duality, it amounts to join $\R\times\{0\}$ and $\R\times H^\vee$ in $\mathcal{S}(\R\times H^\vee)$. Using connectedness of $\mathcal{S}(\R)$, we can join $\R\times\{0\}$ and $\{(0,0)\}$, respectively $\{0\}\times H^\vee$ and $\R\times H^\vee$, so it is enough to join $\{(0,0)\}$ and $\{0\}\times H^\vee$, which was done in the previous case.\qedhere
\end{itemize}
\end{proof}

\begin{thm}\label{con}
If $R(G)\ge 1$ then $\mathcal{S}(G)$ is connected. If moreover $G=\R^k\times M$ and $M$ is metacircular (e.g.~if $G$ is a compactly generated Lie group), then $\mathcal{S}(G)$ is path-connected.
\end{thm}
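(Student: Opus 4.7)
The plan is to first establish the path-connectedness statement for metacircular groups by a polycircular induction, and then to deduce the connectedness statement in the general case by writing $\mathcal{S}(G)$ as an inverse limit of spaces attached to metacircular groups. In both parts I write $G = \R^k \times H$ with $k = R(G) \geq 1$ and $H$ compact-by-discrete (Proposition~\ref{rcd}), and reduce via Proposition~\ref{rppr}: every $L \in \mathcal{S}(G)$ is path-connected to $\{0\} \times L_1$ where $L_1$ is the closure of the projection of $L$ on $H$, so it is enough to path-connect (resp.\ connect) $\{0\} \times L_1$ to $\{0\}$ in $\mathcal{S}(G)$ for each $L_1 \in \mathcal{S}(H)$.

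Assume first that $G$ is metacircular, so $H$ admits a compact open subgroup $M$ with $H/M$ and $M^\vee$ polycircular. Set $P = L_1 \cap M$ and $D = L_1/P$; since polycircularity is stable under subgroups and quotients, the embedding $D \hookrightarrow H/M$ and the surjection $M^\vee \twoheadrightarrow P^\vee$ show that both $D$ and $P^\vee$ are polycircular. Lift a composition series of $D$ to a filtration $P = H_0 \subset H_1 \subset \cdots \subset H_n = L_1$ with each $H_i/H_{i-1}$ circular, and for each $i$ let $\bar\psi_i : H_i \to \R/\Z$ be the continuous homomorphism with kernel $H_{i-1}$ induced from an injection $H_i/H_{i-1} \hookrightarrow \R/\Z$. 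Inside $\R \times H_i \subset G$, consider the closed subgroup $L^{(i)} = \{(x,h) : x + \bar\psi_i(h) \in \Z\}$ and the path $\tau_\lambda(L^{(i)})$ for $\lambda \in (0,+\infty)$; a direct adaptation of the limit computation in the proof of Lemma~\ref{boutdechemin} (crucially using injectivity of $H_i/H_{i-1} \hookrightarrow \R/\Z$) shows $\tau_\lambda(L^{(i)}) \to \R \times H_i$ as $\lambda \to 0^+$ and $\tau_\lambda(L^{(i)}) \to \{0\} \times H_{i-1}$ as $\lambda \to +\infty$. Combining with the path from $\{0\} \times H_i$ to $\R \times H_i$ supplied by Proposition~\ref{rppr} applied inside $\R \times H_i$ yields a path from $\{0\} \times H_i$ to $\{0\} \times H_{i-1}$ in $\mathcal{S}(G)$, and concatenating over $i$ produces a path from $\{0\} \times L_1$ to $\{0\} \times P$. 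For the remaining segment from $\{0\} \times P$ to $\{0\}$, I restrict attention to $\mathcal{S}(\R^k \times P)$ and apply Pontryagin-Chabauty duality (Theorem~\ref{pc}): using Proposition~\ref{rppr} on both sides, the problem transfers to path-connecting $\{0\}$ to $\{0\} \times P^\vee$ inside $\mathcal{S}(\R^k \times P^\vee)$, which is handled by the same filtration argument since $P^\vee$ is polycircular.

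For the general case $R(G) \geq 1$, I invoke the inverse limit construction from Section~5. Choose a filtering system of pairs $(\Omega_i, K_i)$ with each $\Omega_i$ a compactly generated open subgroup of $G$, the $\Omega_i$ exhausting $G$, and each $K_i \subset \Omega_i$ compact with $\Omega_i/K_i$ a Lie group and $\bigcap_i K_i = \{0\}$ (such $K_i$ exist since any compact abelian group is a projective limit of compact Lie groups). The construction at the end of Section~5 then identifies $\mathcal{S}(G)$ with $\varprojlim_i \mathcal{S}(\Omega_i/K_i)$. By Lemma~\ref{Rop} each $\Omega_i/K_i$ has $R$-invariant equal to $R(G) \geq 1$, and by Proposition~\ref{rcd} it has the form $\R^k \times \Z^\ell \times (\R/\Z)^m \times F$; this is metacircular, because the discrete quotient $\Z^\ell \times F$ and the Pontryagin dual $\Z^m \times F$ of the compact open part are finitely generated abelian and hence polycircular. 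By the first half of the proof each $\mathcal{S}(\Omega_i/K_i)$ is path-connected, in particular connected, and an inverse limit of nonempty connected compact Hausdorff spaces is connected. The principal obstacle is the verification of the limit behavior of $\tau_\lambda(L^{(i)})$ in the non-circular setting; once this is carried out, the rest is a routine combination of the established tools along the polycircular filtration and under Pontryagin-Chabauty duality.
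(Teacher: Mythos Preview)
Your proof is correct and follows essentially the same strategy as the paper: reduce via Proposition~\ref{rppr} to joining $\{0\}\times L_1$ to $\{0\}$, handle this by a polycircular filtration combined with the graph-deformation idea of Lemma~\ref{boutdechemin}, and obtain the general connectedness statement by an inverse-limit argument.

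The differences are purely organizational. For the metacircular case, the paper asserts a single composition series of $L_1$ with successive quotients among $\Z/d\Z$, $\Z$, $\R/\Z$ and then quotes Lemma~\ref{boutdechemin} at each step; this is literally accurate only when $G$ is a compactly generated Lie group, and the general metacircular case implicitly relies on the ``$H^\vee$ circular'' clause of that lemma. You make this explicit by splitting $L_1$ into its compact part $P=L_1\cap M$ and the discrete quotient $D=L_1/P$, treating $D$ by a genuine polycircular series and $P$ via Pontryagin--Chabauty duality applied to $\mathcal{S}(\R^k\times P)$; this is cleaner and avoids the apparent gap. For the general case, the paper performs a two-pass projective limit (first pass to Lie groups over compactly generated open subgroups, then dualize to reach compactly generated groups, then a second pass), whereas you do it in one pass over pairs $(\Omega_i,K_i)$ using the Section~5 machinery. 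Both routes rest on the same fact, that an inverse limit of connected compact Hausdorff spaces is connected.
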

\begin{proof}
First assume that $M$ is metacircular. Set $G=\R\times H$. 

In view of Proposition \ref{pba}, it is enough to show that every closed subgroup of the form $\{0\}\times M$, can be joined to $\{0\}$ by a path. We can find a composition series
$$\{0\}=M_0\le\dots\le M_n=M$$
such that each $M_n/M_{n+1}$ is isomorphic to $\Z/d\Z$ for some $d=d(n)\ge 1$, $\Z$ or $\R/\Z$. 
From Lemma \ref{boutdechemin}, we can join $\R\times M_i$ to $\R\times M_{i-1}$ by a path. Combining, we join $\R\times M$ and $\R\times\{0\}$. This proves the second statement.

The first statement is deduced by the projective limit argument. First assume that $G$ is a Lie group. Then $M$ is the projective limit of its open compactly generated subgroups $M_i$, so $\mathcal{S}(G)$ is the projective limit of $\mathcal{S}(\R^k\times G_i)$, which are compact and connected. So $\mathcal{S}(G)$ is connected.

By Pontryagin duality, we deduce that $\mathcal{S}(G)$ is connected when $G$ is any compactly generated locally compact abelian group with $R(G)\ge 1$ (see Paragraph \ref{opd}). We can reiterate a second time the projective limit argument to deduce that $\mathcal{S}(G)$ is connected for any locally compact abelian group with $R\ge 1$. 
\end{proof}

From Theorem \ref{con}, we see that $\mathcal{S}(G)$ can be path-connected even when $G$ is not a compactly generated Lie group, for instance when $G=\R\times D$ with $D$ a discrete group with a injection into $\R/\Z$. However, path-connectedness may fail for some groups with $R\ge 1$, as the following example shows.

\begin{prop}Let $A$ be an infinite, discrete abelian group of uniform torsion. Then $\mathcal{S}(\R\times A)$ is not path-connected.\label{sra}
\end{prop}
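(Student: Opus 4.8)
The plan is to exhibit two closed subgroups lying in distinct path-components of $\mathcal{S}(G)$, $G=\R\times A$; the natural candidates are $\{0\}$ and $\R\times A$. Write $N$ for a common bound on the torsion, so $NA=0$. First I set up the one invariant that survives: since $A$ is discrete, $\R\times\{0\}$ is an \emph{open} subgroup of $G$, so the restriction map $H\mapsto H\cap(\R\times\{0\})$ is continuous; composing with the covolume map $\mathcal{S}(\R)\to[0,\infty]$ yields a continuous function
\[
c\colon\mathcal{S}(G)\to[0,\infty],\qquad c(H)=\operatorname{covol}\bigl(H\cap(\R\times\{0\})\bigr).
\]
Because $NA=0$ we have $NH\subseteq\R\times\{0\}$, which forces $\pi_\R(H)$ to be discrete whenever $H$ is discrete; and every closed subgroup of $G$ is either discrete or contains $\R\times\{0\}$. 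A direct check then gives $c^{-1}(0)=\{\R\times B:B\le A\}$ and $c^{-1}(\infty)=\{H:H\subseteq\{0\}\times A\}$, both homeomorphic to $\mathcal{S}(A)$ and hence \emph{totally disconnected}. Note $c(\{0\})=\infty$ while $c(\R\times A)=0$.

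Next I would control the ``horizontal shadow'' $B_H:=\pi_A(H)\le A$ along a path. The basic continuity fact is that for each $a\in A$ the set $\{H:a\in B_H\}$ is open (if $(x,a)\in H$ then, since $A$ is discrete, a Chabauty-close $H'$ still contains a point with second coordinate $a$). The key lemma I would prove is: for any path $\gamma$ the map $s\mapsto B_{\gamma(s)}$ is \emph{locally constant} on the open set $\{s:c(\gamma(s))<\infty\}$, and across a parameter where $c(\gamma(s))=\infty$ the shadow can change only to a subgroup of index at most $N$. The local constancy comes from a short element-chase: on $\{0<c<\infty\}$ the group $\pi_\R(\gamma(s))$ is discrete with a systole generator $(\sigma_s,a_0)$ whose torsion part $a_0$ is locally constant (discreteness of $A$), and any attempt to gain or lose an element $a$ of the shadow produces, after subtracting a large multiple of the generator, a genuine vertical element $(0,a-ja_0)$ of the limit group, forcing $a$ back into the limiting shadow; at parameters with $c=0$ the dense horizontal part lets one retract every witness to $\R$-coordinate $0$, giving equality of shadows; and at $c=\infty$ the systole generator escapes to infinity, which is exactly what permits (but bounds by $\langle a_0\rangle$, hence index $\le N$) a change of shadow. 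In particular \emph{finiteness} of $B_{\gamma(s)}$ is preserved at every individual crossing.

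The endgame then assumes a path $\gamma$ from $\{0\}$ to $\R\times A$ and derives a contradiction from the continuity of $c$. Near $s=1$ one has $c(\gamma(s))<\infty$, so by local constancy $B_{\gamma(s)}=A$ (infinite) on a whole interval $(\alpha,1]$, with $c(\gamma(\alpha))=\infty$; near $s=0$ one has $B_{\gamma(s)}$ finite. Thus the shadow must pass from finite to infinite, and by the lemma this can happen only across the totally disconnected locus $c^{-1}(\infty)$, one index-$N$ step at a time; producing an \emph{infinite} shadow therefore requires infinitely many such crossings, which (being in a compact interval) must accumulate. Here is the crux, and the step I expect to be the main obstacle to write cleanly: one must show that adjoining a new vertical generator and then ``locking it in'' is impossible while keeping $c$ bounded. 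Adjoining a generator forces $c\to\infty$ (the systole generator escapes, i.e.\ the parameter sits in $c^{-1}(\infty)$), whereas converting a freshly adjoined \emph{diagonal} generator $(\sigma,a)$ into a \emph{vertical} one $(0,a)$ — as is required for it to persist in the shadow of the next $c^{-1}(\infty)$ crossing — forces $\sigma\to0$, i.e.\ $c\to0$. Hence an accumulation point $s_0$ of infinitely many shadow-increasing crossings is simultaneously a limit of parameters with $c=\infty$ and of parameters with $c=0$, so $c\circ\gamma$ cannot converge at $s_0$, contradicting continuity of $c$. This rules out the path and shows $\{0\}$ and $\R\times A$ are path-disconnected. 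I would finally remark that the argument uses infiniteness of $A$ (to need infinitely many crossings) and uniform torsion (for the discreteness of $\pi_\R(H)$ and the index-$N$ bound) in an essential way, consistently with Theorem~\ref{con} giving path-connectedness in the metacircular case; the general infinite bounded $A$ reduces to this by extracting a direct summand $\bigoplus_{\mathbf N}\Z/q\Z$ and using that $\mathcal{S}(\R\times\bigoplus\Z/q\Z)$ is a retract of $\mathcal{S}(\R\times A)$ via intersection with the corresponding open subgroup.
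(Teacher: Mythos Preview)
Your setup matches the paper's: your covolume map $c$ is (the reciprocal of) the paper's map $\rho:\mathcal{S}(G)\to\mathcal{S}(\R)\cong[0,\infty]$, and your shadow $B_H=\pi_A(H)$ is the paper's projection map $w$; the local constancy of $w$ on $\{c<\infty\}$ is correct and proved the same way. Where you diverge is the endgame, and there the argument has a genuine gap.

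The paper's proof introduces a \emph{second} invariant alongside $w$, namely the intersection $w'(H)=H\cap(\{0\}\times A)$, and shows (using $NA=0$) that $w'$ is continuous---hence locally constant---on the open set $\{c>0\}$. Since $[w(H):w'(H)]$ divides $N$ for every $H$, and since $\{c<\infty\}$ and $\{c>0\}$ together cover the path, compactness yields a \emph{finite} partition $0=t_0<\dots<t_k=1$ with each $[t_i,t_{i+1}]$ lying entirely in one of the two open sets; on each piece one of $w,w'$ is constant, so $w(\gamma(t_i))$ and $w(\gamma(t_{i+1}))$ are commensurable, and hence so are $w(\gamma(0))$ and $w(\gamma(1))$. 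That is the whole argument.

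Your attempt to avoid $w'$ by tracking individual ``crossings'' of $\{c=\infty\}$ does not close. The set $\{s:c(\gamma(s))=\infty\}$ can be an arbitrary closed subset of $[0,1]$, so ``consecutive crossings'' need not make sense, and your contradiction at an accumulation point $s_0$ is not justified. Your crux---that ``locking in'' a new vertical generator forces $c\to 0$---is really the statement that $w'$ can only change where $c=0$, i.e.\ precisely the continuity of $w'$ on $\{c>0\}$ that you never isolate; the phrasing via $\sigma\to 0$ is misleading, since $(0,a)$ can enter $H$ by having $\sigma$ land on a nonzero lattice point of $H\cap\R$. And even granting the correct version, there is no reason the $c=\infty$ parameters (needed to change $w$) and the $c=0$ parameters (needed to change $w'$) should both accumulate at the \emph{same} $s_0$: they lie in disjoint closed sets. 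Once you make $w'$ explicit and use the finite-cover argument, the whole accumulation analysis becomes unnecessary.

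(The closing reduction to $\bigoplus_{\mathbf N}\Z/q\Z$ via a retract is also unnecessary: the argument applies directly to any $A$ of bounded exponent.)
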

\begin{proof}
Consider the natural map $\rho:\mathcal{S}(\R\times A)\to\mathcal{S}(\R)$. Identifying $\lambda\Z$ with $\lambda^{-1}$, $\{0\}$ with $0$ and $\R$ with $\infty$, we can view $\rho$ as a continuous map to $[0,\infty]$.

In restriction to $\rho^{-1}(]0,+\infty])$, an easy compactness argument shows that the projection map $w$ to $\mathcal{S}(A)$ is continuous. 

Besides, in restriction to $\rho^{-1}([0,+\infty[)$, the intersection map $w'$ to $\mathcal{S}(A)$ is continuous. Indeed if $(H_i)$ tends to $H$, a priori $H_i\cap A$ could tend to something smaller than $H\cap A$. But if $(0,h)\in H\cap A$, it is approximable by elements $(e_i,h_i)$ of $H_i$; so $(de_i,0)$ belongs to $H_i$ (where $dA=0$); unless $e_i$ is eventually zero, this would imply that $\rho(H_i)\to\infty$, contradiction.

We claim that if $A_0$ and $A_1$ are subgroups of $A$, and if $\R\times A_0$ and $\R\times A_1$ can be joined by a path, then $A_0$ and $A_1$ are commensurable (the converse is an easy consequence of Theorem \ref{con}). Consider a path $\gamma$ in $\mathcal{S}(\R\times A)$ with $\gamma(i)=\R\times A_i$ ($i=0,1$). By compactness, we can find $0=t_0<\dots<t_1<\dots <t_k=1$ such that each $\gamma([t_i,t_{i+1}])$ is contained in either $\rho^{-1}(]0,+\infty])$ or $\rho^{-1}([0,+\infty[)$. In the first case, $w(\gamma(t_i))=w(\gamma(t_{i+1})$ by continuity and connectedness, and because $\mathcal{S}(A)$ is totally disconnected. Similarly, in the second case, $w'(\gamma(t_i))=w'(\gamma(t_{i+1})$. But $w'(H)$ is a finite index subgroup of $w(H)$ for any $H$, since any subquotient of $\R$ of uniform torsion is finite. Therefore in all cases, $w(\gamma(t_i))$ is commensurable to $w(\gamma(t_{i+1}))$. Accordingly $w(t_0)=A_0$ and $w(t_k)=A_1$ are commensurable.
\end{proof}

\section{Dimension}\label{sec:Dimension}
In this section, we have to switch from inductive to topological dimension and vice versa (see the reminder in Paragraph \ref{todi}), depending on the tools available.

The following lemma is Theorem VI.7 in \cite{HW}.
\begin{lem}\label{dfiber}
Let $f$ be a closed map between separable metrizable spaces. If all fibers of $f$ have dimension $\le b$, then $\inddim(X)\le\inddim(Y)+b$.
\end{lem}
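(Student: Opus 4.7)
The classical proof, which I would follow, proceeds by induction on $n=\inddim(Y)$, using Proposition~\ref{toporef}(\ref{ury}) to pass freely between inductive and covering dimension on separable metric spaces. The base case $n=-1$ is trivial: then $Y=\emptyset$ forces $X=\emptyset$, and both sides equal $-1$ (where one uses the convention that the dimension of the empty space is $-1$).

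For the inductive step, fix $x\in X$ and an open neighborhood $U$ of $x$; I need to produce a smaller open neighborhood $V\subset U$ of $x$ with $\inddim(\partial V)\le n+b-1$. Set $y=f(x)$. Using $\inddim(f^{-1}(y))\le b$, I choose a relatively open neighborhood $V_0$ of $x$ in $f^{-1}(y)\cap U$ whose fiberwise boundary $\partial_{f^{-1}(y)}V_0$ has inductive dimension $\le b-1$. Using $\inddim(Y)\le n$, I choose a neighborhood $W$ of $y$ in $Y$ with $\inddim(\partial W)\le n-1$. I then inflate $V_0$ to an open set $V\subset f^{-1}(W)\cap U$ containing $x$, with $V\cap f^{-1}(y)=V_0$, arranged so that
$$\partial V\ \subset\ f^{-1}(\partial W)\ \cup\ E,\qquad E=\bigcup_{y'\in W}\partial_{f^{-1}(y')}\bigl(V\cap f^{-1}(y')\bigr).$$
The outer inductive hypothesis, applied to the closed restriction $f\colon f^{-1}(\partial W)\to\partial W$ (whose target has inductive dimension $\le n-1$ and whose fibers still have dimension $\le b$), gives $\inddim(f^{-1}(\partial W))\le(n-1)+b$. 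A parallel inner induction on $b$, or an application of the sum theorem over a suitable countable refinement of $W$, handles $E$ and yields $\inddim(E)\le n+(b-1)$. Since $f^{-1}(\partial W)$ is closed in $X$, the sum theorem for closed unions in separable metric spaces gives $\inddim(\partial V)\le n+b-1$, completing the induction.

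The main obstacle is the inflation step: constructing $V$ so that $V\cap f^{-1}(y)=V_0$ and so that $\partial V$ genuinely decomposes as above. This is exactly where closedness of $f$ is essential: for any small open thickening $V_0'$ of $V_0$ in $X$ whose intersection with $f^{-1}(y)$ has closure disjoint from $\partial_{f^{-1}(y)}V_0$, the set $f(X\setminus V_0')$ is closed in $Y$ and misses $y$; shrinking $W$ to lie in its complement lets one separate $V_0$ cleanly from the rest of $X$ over $W$ and defines the required $V$. Assembling the local constructions into the global dimension bound relies critically on the sum theorem in separable metric spaces (a countable union of closed subsets of dimension $\le k$ has dimension $\le k$), which together with the equivalence of inductive and covering dimension makes the inductive step go through.
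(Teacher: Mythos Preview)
The paper does not prove this lemma at all; the sentence introducing it reads ``The following lemma is Theorem~VI.7 in \cite{HW}'', and no argument is supplied. So there is no proof in the paper to compare yours against---the paper simply imports the result as a black box from Hurewicz--Wallman.

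As for your sketch on its own merits: the inductive framework is the classical one, but the heart of the argument---the inflation of $V_0$ to $V$ and the bound on $E$---is not actually carried out. You define $E=\bigcup_{y'\in W}\partial_{f^{-1}(y')}(V\cap f^{-1}(y'))$, an uncountable union of sets each of which is closed only in its own fiber, and then assert that ``the sum theorem over a suitable countable refinement of $W$'' yields $\inddim(E)\le n+(b-1)$. The sum theorem requires a \emph{countable} union of \emph{closed} subsets of $X$, and you have neither; nor do you explain what ``a parallel inner induction on $b$'' would mean here, since $E$ does not obviously carry a closed map to a base whose fibers have dimension $\le b-1$. The decomposition $\partial V\subset f^{-1}(\partial W)\cup E$ is likewise asserted rather than verified, and whether it holds depends entirely on the construction of $V$, which you describe only as ``inflate $V_0$'' together with a one-line appeal to closedness of $f$. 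These are exactly the places where the theorem has genuine content, and your sketch does not resolve them. If you want to give a self-contained proof rather than cite \cite{HW}, you will need either to make the inflation step precise enough that $\partial V$ really decomposes into two closed pieces each admitting the inductive hypothesis, or to switch to one of the alternative characterizations of dimension (separation of pairs of closed sets, or extension of maps into spheres) that the Hurewicz--Wallman argument actually uses.
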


The following theorem is a corollary of Kloeckner's stratification \cite{Klo} of $\mathcal{S}(\R^d)$.

\begin{thm}\label{rddd}
$$\inddim(\mathcal{S}(\R^d))=d^2.$$
\end{thm}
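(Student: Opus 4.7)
The plan is to combine Kloeckner's stratification of $\mathcal{S}(\R^d)$ with two standard dimension-theoretic principles (monotonicity and the sum theorem), then reconcile covering and inductive dimensions via Proposition~\ref{toporef}(\ref{ury}).

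First, I would invoke Kloeckner's description of $\mathcal{S}(\R^d)$ as a disjoint union of finitely many locally closed subspaces
$$\mathcal{S}(\R^d)=\bigsqcup_{\substack{p,q\ge 0\\ p+q\le d}} S_{p,q},$$
where $S_{p,q}$ consists of the closed subgroups of $\R^d$ isomorphic to $\R^p\times\Z^q$. Kloeckner exhibits each $S_{p,q}$ as a topological manifold of dimension $(p+q)(d-p)$: roughly, it fibers over the Grassmannian $\mathrm{Gr}(p,d)$ of dimension $p(d-p)$ with fiber the space of rank-$q$ lattices in a $(d-p)$-dimensional quotient vector space, itself of dimension $q(d-p)$.

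A direct combinatorial optimization shows that $(p+q)(d-p)$ is maximized over the indexing set at $(p,q)=(0,d)$: for fixed $p$ one takes $q=d-p$ to obtain $d(d-p)$, and this is then maximized at $p=0$. The maximum value is $d^2$, attained on the stratum $S_{0,d}$ of lattices of full rank in $\R^d$. For the lower bound, $S_{0,d}$ is a $d^2$-dimensional manifold embedded in $\mathcal{S}(\R^d)$, so monotonicity of the inductive dimension yields $\inddim(\mathcal{S}(\R^d))\ge d^2$. For the upper bound, $\mathcal{S}(\R^d)$ is a compact metrizable (hence separable metrizable) space which is a finite union of the locally closed subsets $S_{p,q}$, each of topological dimension at most $d^2$; since in a metric space every locally closed subset is $F_\sigma$, the countable sum theorem for covering dimension on separable metric spaces forces $\topdim(\mathcal{S}(\R^d))\le d^2$. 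I would then appeal to Proposition~\ref{toporef}(\ref{ury}) to identify $\topdim$ with $\inddim$, giving the claimed equality.

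The only genuine point of substance is Kloeckner's stratification itself, which must supply both the combinatorial indexing and the manifold structure (with the claimed stratum dimensions) on each piece; I would treat it as a black box. Once it is in hand, everything else is bookkeeping: the optimization $(p+q)(d-p)\le d^2$, monotonicity of $\inddim$ on the top stratum, and the sum theorem applied to a finite $F_\sigma$ decomposition.
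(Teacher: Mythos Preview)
Your proof is correct, but the upper-bound argument differs from the paper's. Both you and the paper obtain the lower bound from the open $d^2$-dimensional stratum of full-rank lattices. For the upper bound, the paper explicitly remarks that the mere partition of $\mathcal{S}(\R^d)$ into finitely many manifolds of dimension at most $d^2$ does not \emph{obviously} yield $\inddim(\mathcal{S}(\R^d))\le d^2$, and instead invokes the stronger fact (due to Kloeckner) that this partition is a Goresky--MacPherson stratification: by definition, every point in an $n$-dimensional GM stratified space has a basis of neighbourhoods whose boundaries are $(n-1)$-dimensional GM stratified spaces, so one reads off $\inddim\le n$ directly by induction.

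Your route sidesteps the GM machinery entirely. You observe that each stratum $S_{p,q}$ is locally closed, hence $F_\sigma$ in the compact metric space $\mathcal{S}(\R^d)$, and apply the countable sum theorem for covering dimension to the resulting decomposition into closed pieces, each of dimension at most $(p+q)(d-p)\le d^2$; then Proposition~\ref{toporef}(\ref{ury}) converts back to $\inddim$. This is a genuinely more elementary argument: it requires from Kloeckner only that the strata are manifolds of the stated dimensions, not the finer conical local structure encoded in the GM condition. The paper's approach, on the other hand, stays entirely within inductive dimension and avoids the detour through $\topdim$ and the sum theorem.
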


\begin{proof}Since the set of lattices is an open $d^2$-dimensional manifold in $\mathcal{S}(\R^d)$, it is clear that $\inddim(\mathcal{S}(\R^d))\ge d^2$. Also, $\mathcal{S}(\R^d)$ has a natural finite partition into $\GL_d(\R)$-orbits, each of which is a manifold of dimension $\le d^2$. However, it is not clear that this directly provides the desired inequality $\inddim(\mathcal{S}(\R^d))\le d^2$. In \cite{Klo}, Kloeckner proves that the above partition is a {\it Goresky-MacPherson stratification}, and it directly follows from the definition (which we do not recall here) that every point in a $n$-dimensional Goresky-MacPherson stratified space has a system of neighbourhoods whose boundaries are $(n-1)$-dimensional Goresky-MacPherson stratified spaces; in particular the inductive dimension of an $n$-dimensional Goresky-MacPherson stratified space is $\le n$.
\end{proof}

\begin{defn}
Let $X$ be a locally compact space. Define $\inddim_\infty(X)$ as the dimension of the Alexandrov compactification of $X$ at the point $\infty$, that is,
$$\inddim_\infty(X)=\sup_K(1+\inf_L\{\inddim \partial L:L\supset K\}),$$
where $K,L$ range over compact subsets of $G$.
\end{defn}

\begin{lem}\label{diminf}
Let $G$ be a metrizable locally compact abelian group. Then $\inddim_\infty(G)=0$ if $G$ is compact-by-discrete, and $\inddim_\infty(G)=\inddim(G)$ otherwise.
\end{lem}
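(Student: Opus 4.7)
First, I would split by whether $R(G)=0$ or $R(G)\ge 1$. For the compact-by-discrete case ($R(G)=0$), Lemma~\ref{Rop} supplies a compact open subgroup $M\subset G$; every compact $K\subset G$ meets only finitely many cosets of $M$, whose union $L$ is a compact clopen set containing $K$ with $\partial L=\emptyset$. This immediately yields $\inddim_\infty(G)\le 0$, and the reverse inequality is automatic.

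For $R(G)\ge 1$, I would write $G=\R^k\times H$ with $k\ge 1$ (Proposition~\ref{rcd}), where $H$ has a compact open subgroup $M$; a product-formula computation gives $\inddim(G)=k+\inddim(M)$. For the upper bound, given compact $K\subset G$, I would cover the $H$-projection of $K$ by finitely many cosets of $M$ to obtain a compact clopen set $M'\subset H$, and fix $R>0$ with $K\subset L:=[-R,R]^k\times M'$. Because $M'$ is clopen in $H$, $\partial L=\partial([-R,R]^k)\times M'$, whose dimension is $(k-1)+\inddim(M)=\inddim(G)-1$ by the product formula; hence $\inddim_\infty(G)\le\inddim(G)$.

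For the lower bound $\inddim_\infty(G)\ge\inddim(G)$, the heart of the matter, I would take $K=[-1,1]^k\times M$. For any compact $L\supset K$, the boundary $\partial L$ separates $G$ into the nonempty open sets $\mathrm{int}(L)$ (containing the interior of $K$) and $G\setminus L$ (nonempty since $L$ is bounded along $\R^k$), so it should behave like a codimension-one barrier. To turn this into a rigorous dimension bound, my plan is to reduce to a Lie quotient via Pontryagin duality: assuming $\inddim(M)<\infty$ (else approximate by arbitrarily large finite rank), pick a free subgroup $\Z^r\subset M^\vee$ of maximal rank $r=\inddim(M)$, with annihilator $N\subset M$; then $N$ is profinite (as the dual of the torsion quotient $M^\vee/\Z^r$) and $M/N\cong(\R/\Z)^r$, so $G/N=\R^k\times H/N$ is a Lie group of dimension $\inddim(G)$. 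In this Lie quotient the classical manifold-separator theorem yields $\inddim\partial\pi(L)\ge\inddim(G/N)-1=\inddim(G)-1$, where $\pi$ is the quotient map. Since $\pi$ is open, $\pi^{-1}(\partial\pi(L))\cap L\subset\partial L$; exploiting that $\pi$ is a principal $N$-bundle with $N$ zero-dimensional (so $\pi$ admits continuous local sections and has $0$-dimensional fibers), a local-triviality argument transfers the dimension bound to $\partial L$, giving $\inddim\partial L\ge\inddim(G)-1$. The hardest step is precisely this final transfer across the bundle: general continuous surjections can raise dimension (witness the Peano curve), so the principal-bundle structure together with $\inddim(N)=0$ is essential.
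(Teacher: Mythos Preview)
Your treatment of the compact-by-discrete case and of the upper bound when $R(G)\ge 1$ is correct and matches the paper's argument (the paper first passes to an open subgroup of the form $\R^k\times K$ with $K$ compact, which is the same simplification in different clothing).

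The lower bound, however, has a genuine gap at exactly the place you flag as ``the hardest step''. You establish $\pi^{-1}(\partial\pi(L))\cap L\subset\partial L$ and that $\pi$ restricted to this set surjects onto $\partial\pi(L)$. But local triviality of the bundle $\pi:G\to G/N$ does \emph{not} give you $\inddim(\partial L)\ge\inddim(\partial\pi(L))$. Local triviality tells you that $\inddim(\pi^{-1}(B))=\inddim(B)$ for subsets $B$ of the base (since locally $\pi^{-1}(B)\cong B\times N$ with $\inddim N=0$), but $\partial L$ is not a full preimage: it is only some closed subset of $\pi^{-1}(\partial\pi(L))$ meeting every fiber. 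A closed subset of $B\times N$ projecting onto $B$ can have strictly smaller dimension than $B$ --- take $B=[0,1]^2$, $N$ a Cantor set, $\phi:N\to[0,1]$ a continuous surjection, and $A=\{(x,\phi(f),f):x\in[0,1],\,f\in N\}$; then $A$ projects onto $B$ but $\inddim A=1<2$. Your local sections land in $G$, not in $\partial L$, so they do not embed $\partial\pi(L)$ into $\partial L$. Without an additional idea, the transfer fails.

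The paper sidesteps this entirely. Rather than quotienting, it \emph{embeds}: using Lemma~\ref{covg}, there is a homomorphism $\R^n\to K$ (with $n=\inddim K$) with discrete kernel, hence injective on a small closed ball; its image is a closed copy of $D^n$ inside $K$. Then $\R^k\times D^n$ sits as a closed subset of $G$, and since closed embeddings extend to one-point compactifications, the point $\infty$ in $G^+$ lies on a copy of $(\R^k\times D^n)^+$. This last space has inductive dimension $k+n$ at its point at infinity (one can see this via a separator argument in $\R^{k+n}$, or by identifying a half-space $\R_{\ge 0}\times D^{k+n-1}$ inside $\R^k\times D^n$ whose compactification is a $(k+n)$-ball), and monotonicity of inductive dimension under subspaces finishes the job. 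This approach is both shorter and avoids any delicate dimension-transfer across a quotient.
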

\begin{proof}
The statement remains the same if we replace $G$ by an open subgroup. Hence we can suppose $G=\R^k\times K$ with $K$ compact. Then the point at infinity has neighbourhoods with boundary of the form $S\times K$, where $S$ is a $(k-1)$-sphere. Observe that $S\times K$ has dimension $\le (k-1)+\inddim(K)$ if $k\ge 1$ and is empty if $k=0$. So if $k\ge 1$, we get $\inddim_\infty(G)\le k+\inddim(K)=\inddim(G)$, the latter equality using Lemma \ref{covg}, and if $k=0$ we get $\inddim_\infty(G)=0$.

Conversely, denoting by $D^n$ the closed $n$-disc, we use the fact that as a consequence of Lemma \ref{covg}, $G$ contains a closed subset homeomorphic to $\R^k\times D^n$ for $n=\inddim(K)$ (or for any $n$ if $\inddim(K)=\infty$). So if $k\ge 1$, the Alexandrov compactification of $G$ contains the Alexandrov compactification of $\R_{\ge 0}\times D^{n+k-1}$, which is a $(k+n)$-sphere passing through the point at infinity. So $\inddim_\infty(G)\ge\inddim(G)$. 
\end{proof}

\begin{lem}\label{dgz}
Let $G$ be a metrizable locally compact abelian group. Then
$$\inddim(\mathcal{S}(G\times\Z))\le\inddim(\mathcal{S}(G))+\inddim(G).$$
\end{lem}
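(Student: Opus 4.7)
The plan is to apply Lemma~\ref{dfiber} to the continuous map
\[
f:\mathcal{S}(G\times\Z)\longrightarrow\mathcal{S}(G),\qquad H\longmapsto H\cap(G\times\{0\}).
\]
This makes sense because $G\times\{0\}$ is open in $G\times\Z$ ($\{0\}$ being clopen in the discrete group $\Z$), so $f$ coincides with the intersection-with-an-open-subgroup map, hence is continuous. Closedness of $f$ is automatic from compactness of $\mathcal{S}(G\times\Z)$ and Hausdorffness of $\mathcal{S}(G)$, and both spaces are separable metrizable since $G$ is. It thus suffices to prove $\inddim(f^{-1}(L))\le\inddim(G)$ for every $L\in\mathcal{S}(G)$.

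Fix $L\in\mathcal{S}(G)$ and set $F_L=f^{-1}(L)$. Given $H\in F_L$ different from $L\times\{0\}$, let $d(H)\ge 1$ be the least positive integer occurring as second coordinate of an element of $H$, and pick $g_0\in G$ with $(g_0,d(H))\in H$. A direct check gives $H=(L\times\{0\})+\Z\cdot(g_0,d(H))$, and the class $[g_0]\in G/L$ is independent of the choice; conversely every $(d,[g_0])\in\Z_{\ge 1}\times G/L$ arises from a unique such $H$. Unravelling the Chabauty topology, one verifies that a sequence $H_n\in F_L\setminus\{L\times\{0\}\}$ converges to $L\times\{0\}$ exactly when either $d(H_n)\to\infty$ or $[g_{0,n}]$ eventually leaves every compact subset of $G/L$. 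Consequently $F_L$ is homeomorphic to the Alexandroff one-point compactification of $\Z_{\ge 1}\times G/L$, with $\infty$ corresponding to $L\times\{0\}$.

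Since $G$ is second countable, $G/L$ is $\sigma$-compact; pick compacts $K_1\subset K_2\subset\cdots$ exhausting $G/L$. Then
\[
F_L=\{\infty\}\cup\bigcup_{d\ge 1,\,i\ge 1}\{d\}\times K_i
\]
is a countable union of closed subsets, each of inductive dimension at most $\inddim(G/L)$. The countable sum theorem for dimension in separable metric spaces~\cite{HW} then gives $\inddim(F_L)\le\inddim(G/L)$, and $\inddim(G/L)\le\inddim(G)$ by Lemma~\ref{covg} (any torus quotient of $G/L$ is one of $G$). Applying Lemma~\ref{dfiber} with $b=\inddim(G)$ yields the announced inequality.

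The main technical point is the identification of the fiber topology: while the set-theoretic parametrization of $F_L\setminus\{L\times\{0\}\}$ by $(d,[g_0])$ is essentially algebraic, verifying that $L\times\{0\}$ sits inside $F_L$ precisely as the point at infinity of $\Z_{\ge 1}\times G/L$ requires a careful application of the convergence criterion for the Chabauty topology recalled in Section~\ref{chab}.
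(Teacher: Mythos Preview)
Your approach is exactly the paper's: intersect with $G\times\{0\}$, identify the fibre set-theoretically with partial homomorphisms $\Z\to G/L$, and feed the resulting fibre bound into Lemma~\ref{dfiber}. However, the assertion that $F_L$ is homeomorphic to the one-point compactification of $\Z_{\ge 1}\times(G/L)$ is not correct in general. Take $G=\bigoplus_{n\ge 1}\Z/2^n\Z$ (discrete), $L=0$, $g_n=2^{n-1}e_n$ an element of order~$2$, and $H_n=\langle(g_n,1)\rangle$. Then $(d(H_n),[g_n])=(1,g_n)$ leaves every finite set, so it tends to $\infty$ in the compactification; but $(0,2)=(2g_n,2)\in H_n$ for all $n$, so in the Chabauty topology $H_n\to\{0\}\times 2\Z$, not to $\{0\}\times\{0\}$. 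The problem is that the multiples $k[g_{0,n}]$ need not escape to infinity when $[g_{0,n}]$ does. (The paper states the same compactification claim; the issue is genuine there too.)

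Your proof is nonetheless easy to repair, because your dimension bound via the countable sum theorem does not really use the topology at the distinguished point. All you need is that for each fixed $d$ the assignment $[g_0]\mapsto (L\times\{0\})+\Z(g_0,d)$ is a continuous injection $G/L\to F_L$; it then sends each compact $K_i\subset G/L$ to a compact (hence closed) subset of $F_L$ homeomorphic to $K_i$, and together with $\{L\times\{0\}\}$ these cover $F_L$. The countable sum theorem then gives $\inddim(F_L)\le\inddim(G/L)\le\inddim(G)$ directly, with no reference to how these pieces fit together near $L\times\{0\}$. So drop the compactification claim and keep the sum-theorem step; by contrast, the paper's route through Lemma~\ref{diminf} leans on the compactification picture and would need more work to patch.
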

\begin{proof}
Consider the natural map $\mathcal{S}(G\times\Z)\to\mathcal{S}(G)$. The fiber of $H$ is homeomorphic to the set of partially defined homomorphisms $\Z\to G/H$, which is homeomorphic to the Alexandrov compactification of $G/H\times\Z_{>0}$. A point not at infinity in this space has dimension $\inddim(G/H)$. The point at infinity has a basis of neighbourhood given by the complements of $K\times F$, for $K$ compact in $G/H$ and $F$ finite in $\Z_{>0}$, so if we restrict to $K$ with $\inddim\partial K\le\inddim_\infty(G/H)-1$, we deduce that the dimension of the fiber is $\le\max(\inddim(G/H),\inddim_\infty(G/H))$. By Lemma \ref{dfiber}, we get 
$$\inddim(\mathcal{S}(G\times\Z))\le\inddim(\mathcal{S}(G))+\sup_{H\in\mathcal{S}(G)}(\max(\inddim(G/H),\inddim_\infty(G/H))).$$
Now by Lemma \ref{diminf}, $\inddim_\infty(G/H)\le\inddim(G/H)$, and by Lemma \ref{covg}, $$\inddim(G/H)\le\inddim(G)\qedhere.$$
\end{proof}

\begin{thm}\label{klm}
The inductive, or topological, dimension of $\mathcal{S}(\R^k\times\Z^\ell\times(\R/\Z)^m\times F)$, for $F$ finite, is $(k+\ell)(k+m)$, and is achieved by a piece of manifold.
\end{thm}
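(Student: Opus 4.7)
The plan is to prove matching upper and lower bounds $\inddim(\mathcal{S}(G))=(k+\ell)(k+m)$ with $G=\R^k\times\Z^\ell\times(\R/\Z)^m\times F$, in such a way that the lower bound is realized by an explicit open piece of $\mathcal{S}(G)$ that is a manifold of the claimed dimension.

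For the upper bound, the strategy is to bootstrap from Theorem \ref{rddd}, $\inddim(\mathcal{S}(\R^k))=k^2$, applying Lemma \ref{dgz} iteratively and interleaving Pontryagin-Chabauty duality (Theorem \ref{pc}) to interchange discrete and compact factors. Applying Lemma \ref{dgz} $m$ times yields
$$\inddim(\mathcal{S}(\R^k\times\Z^m))\le k^2+m\cdot k=k(k+m),$$
and Theorem \ref{pc} together with $(\Z^m)^\vee=(\R/\Z)^m$ gives the same bound for $\mathcal{S}(\R^k\times(\R/\Z)^m)$. Since this group has dimension $k+m$ by Lemma \ref{covg}, $\ell$ further applications of Lemma \ref{dgz} produce
$$\inddim(\mathcal{S}(\R^k\times(\R/\Z)^m\times\Z^\ell))\le k(k+m)+\ell(k+m)=(k+\ell)(k+m).$$
To absorb the finite factor, write $G=G'\times F$ with $G'=\R^k\times(\R/\Z)^m\times\Z^\ell$ and consider the continuous intersection map $\mathcal{S}(G)\to\mathcal{S}(G')$, $H\mapsto H\cap G'$, where $G'$ is open in $G$. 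A preimage of $L$ is determined by a subgroup $A\le F$ together with a homomorphism $A\to G'/L$; since $A$ is finite and $G'/L$ is a compactly generated Lie abelian group whose $n$-torsion is finite for each $n$ (as follows from Proposition \ref{rcd}), there are only finitely many such data. Fibers are thus finite, and Lemma \ref{dfiber} preserves the upper bound.

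For the lower bound, I would exhibit an open subset $\mathcal{U}\subset\mathcal{S}(G)$ that is a $(k+\ell)(k+m)$-dimensional manifold, simultaneously supplying the manifold piece in the statement. Let $\pi:G\to\R^k\times\Z^\ell$ denote projection on the first two factors. Define $\mathcal{U}$ to consist of those $H\in\mathcal{S}(G)$ such that $\pi(H)$ is a rank-$(k+\ell)$ cocompact lattice in $\R^k\times\Z^\ell$ and $H\cap(\{0\}\times\{0\}\times(\R/\Z)^m\times F)=\{0\}$. Each such $H$ is the graph of a continuous homomorphism $\phi:\pi(H)\to(\R/\Z)^m\times F$, so $\mathcal{U}$ is parameterized by pairs $(\Lambda,\phi)$. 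The space of such $\Lambda$ is a manifold of dimension $k(k+\ell)$ (each of the $k+\ell$ generators has $k$ continuous $\R^k$-coordinates; the $\Z^\ell$-coordinates being discrete), while the fibers $\Hom(\Lambda,(\R/\Z)^m\times F)\simeq((\R/\Z)^m\times F)^{k+\ell}$ have dimension $m(k+\ell)$. The total dimension is $(k+m)(k+\ell)$.

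The main obstacle is to verify that $\mathcal{U}$ is actually open in $\mathcal{S}(G)$ and that the parameterization is a local homeomorphism. Openness follows because both conditions defining $\mathcal{U}$ (cocompactness of the projection and triviality of the intersection with the compact-finite complement) are preserved under small Chabauty perturbations of the generators. Local injectivity of the parameterization is immediate, since $H$ determines $\Lambda=\pi(H)$ and then $\phi$ is recovered by reading off the graph. Continuity of the inverse reduces to the fact that nearby subgroups in $\mathcal{U}$ admit nearby choices of generators and hence nearby values of $\phi$, which follows from the Chabauty characterization of convergence. This yields $\inddim(\mathcal{U})=(k+m)(k+\ell)$, completing the lower bound and the manifold piece assertion.
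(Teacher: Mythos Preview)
Your proof is correct and follows essentially the same route as the paper: the upper bound is obtained by exactly the same bootstrap (Theorem~\ref{rddd}, iterated Lemma~\ref{dgz}, Pontryagin--Chabauty duality, and Lemma~\ref{dfiber} for the finite factor), and the lower bound is achieved by exhibiting an explicit $(k+\ell)(k+m)$-dimensional manifold inside $\mathcal{S}(G)$. The only cosmetic difference is in the lower bound: the paper takes the orbit of $\Z^{k+\ell}$ under the group of automorphisms $(x,y,z)\mapsto(x,Ay+Bx,z+Cx+Dy)$ and reads off the dimension from that of the acting group together with discreteness of the stabilizer, whereas you parameterize the open set of graphs of homomorphisms $\Lambda\to(\R/\Z)^m\times F$ over lattices $\Lambda\subset\R^k\times\Z^\ell$; the paper's orbit sits inside your $\mathcal{U}$, and the orbit argument spares you the openness and local-homeomorphism verifications.
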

\begin{proof}Since $\mathcal{S}(\R^k\times\Z^\ell\times(\R/\Z)^m\times F)$ is compact and metrizable, the two notions of dimension coincide (Proposition \ref{toporef}(\ref{ury})) so we can work with the inductive dimension.

First, we have to embed a $(k+\ell)(k+m)$-dimensional manifold into $\mathcal{S}(\R^k\times\Z^\ell\times(\R/\Z)^m)$. It will be convenient to rewrite the group as $\Z^\ell\times\R^k\times(\R/\Z)^m$. 
We consider the action of the group of automorphisms of the form $(x,y,z)\mapsto (x,Ay+Bx,z+Cx+Dy)$, where $A\in\GL(\R^k)$, $B\in\Hom(\Z^\ell,\R^k)\simeq \R^{\ell k}$, $C\in\Hom(\Z^\ell,(\R/\Z)^m)\simeq(\R/\Z)^{\ell m}$, $D\in\Hom(\R^k,(\R/\Z)^m)\simeq\R^{km}$.
Consider the subgroup $\Z^{k+\ell}$: its stabilizer is discrete, so its orbit is $(k+\ell)(k+m)$-dimensional; this is the desired piece of manifold and provides the easy inequality.

Conversely, to obtain that the inductive dimension is bounded as given, we first reduce the case from $G\times F$ to $G$. Each fiber of the map $\mathcal{S}(G\times F)\to\mathcal{S}(G)$ can be identified to the set of partial homomorphisms from $F$ to some quotient of $G$, hence is finite. So by Lemma \ref{dfiber}, we have $\inddim(\mathcal{S}(G\times F))\le\inddim(\mathcal{S}(G))$, the other inequality being trivial. 

By Lemma \ref{dgz}, we obtain by induction that $$\inddim(\mathcal{S}(\R^k\times\Z^\ell))\le\inddim(\mathcal{S}(\R^k))+kl.$$
By duality, 
$$\inddim(\mathcal{S}(\R^k\times(\R/\Z)^m))\le\inddim(\mathcal{S}(\R^k))+km,$$
and by a second induction,
$$\inddim(\mathcal{S}(\R^k\times\Z^\ell\times(\R/\Z)^m))\le\inddim(\mathcal{S}(\R^k))+km+\ell(k+m),$$
and finally by Theorem \ref{rddd}
$$\inddim(\mathcal{S}(\R^k\times\Z^\ell\times(\R/\Z)^m))\le (k+\ell)(k+m).$$
\end{proof}

We can now state the general result (see Lemma \ref{covg} and Corollary \ref{dimgvc} for interpretations of $\topdim(G)$ and $\topdim(G^\vee)$).

\begin{thm}\label{covdim}
Let $G$ be a locally compact abelian group. The topological dimension of $\mathcal{S}(G)$ is given by $$\topdim(\mathcal{S}(G))=\topdim(G)\topdim(G^\vee),$$ where $0\infty=\infty 0=0$; in case this value is finite, it is achieved by a piece of manifold.
\end{thm}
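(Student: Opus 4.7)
The plan is to combine Theorem \ref{klm} (which handles compactly generated abelian Lie groups) with the projective-limit formalism of Section ``Some natural maps'' and the inverse-limit bound for topological dimension (Proposition \ref{toporef}(\ref{invers})). The proof splits into a degenerate case, a lower bound realized by a manifold, and an upper bound via inverse limits.

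For the degenerate case, if $\topdim(G)=0$ then by Lemma \ref{covg} no torus quotient exists, so $G_0=\{0\}$ and $G$ is totally disconnected; dually, $\topdim(G^\vee)=0$ forces $G$ elliptic. In either situation Lemma \ref{ttdd} yields $\mathcal{S}(G)$ totally disconnected, hence zero-dimensional (as a compact Hausdorff space), which matches $\topdim(G)\cdot\topdim(G^\vee)=0$ under the convention $0\cdot\infty=0$. Thus we may assume both $\topdim(G)$ and $\topdim(G^\vee)$ are positive.

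For the lower bound (and the manifold claim), I would invoke Lemma \ref{ds} to produce compact $K\subset$ open $H\subset G$ with $H/K\simeq \R^k\times\Z^{(I)}\times(\R/\Z)^{J}$, where $\topdim(G)=k+\#J$ and $\topdim(G^\vee)=k+\#I$. The ``preimage'' map $L\mapsto\pi^{-1}(L)$, where $\pi:H\to H/K$, gives a continuous injection $\mathcal{S}(H/K)\hookrightarrow\mathcal{S}(G)$ onto a closed subspace. For any finite $I'\subset I$, $J'\subset J$, the group $\R^k\times\Z^{I'}\times(\R/\Z)^{J'}$ is a direct factor of $H/K$, so Theorem \ref{klm} embeds a $(k+\#I')(k+\#J')$-dimensional manifold into $\mathcal{S}(G)$. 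If $(k+\#I)(k+\#J)<\infty$, both sets $I,J$ are finite and taking $I'=I$, $J'=J$ produces a manifold of the exact target dimension; otherwise the dimension is $\infty$ by letting $I',J'$ grow.

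For the upper bound, I would represent $G$ as an inverse limit. Every LCA group is the directed union of its compactly generated open subgroups, and admits a cofinal family of compact subgroups $K$ with $G/K$ a Lie group (the ``no small subgroups'' property for LCA). The filtering family of pairs $(\Omega,K)$ with $\Omega$ compactly generated open, $K$ compact, $K\subset\Omega$ and $G/K$ Lie, then satisfies the hypotheses of the discussion in Section ``Some natural maps'', giving a homeomorphism $\mathcal{S}(G)\simeq\varprojlim \mathcal{S}(\Omega/K)$. Each $\Omega/K$ is a compactly generated abelian Lie group, so Theorem \ref{klm} applies:
$$\topdim(\mathcal{S}(\Omega/K))=\topdim(\Omega/K)\cdot\topdim((\Omega/K)^\vee).$$
Open subgroups preserve $\topdim$ (since connected subgroups sit inside $\Omega$, one uses the $\R^k\to G$ characterization of Lemma \ref{covg}), and quotients by compact subgroups do not increase $\topdim$; hence $\topdim(\Omega/K)\le\topdim(G)$. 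Dually, $(\Omega/K)^\vee$ sits as a closed subgroup of the quotient $G^\vee/\Omega^\perp$ by the compact group $\Omega^\perp$, so $\topdim((\Omega/K)^\vee)\le\topdim(G^\vee)$. Proposition \ref{toporef}(\ref{invers}) then gives $\topdim(\mathcal{S}(G))\le\topdim(G)\cdot\topdim(G^\vee)$, completing the proof.

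The main obstacle is the upper bound: one must exhibit a cofinal filtering family of pairs $(\Omega,K)$ making $\Omega/K$ a compactly generated Lie group and with the correct dimension control. The delicate ingredients are the ``no small subgroups'' statement for LCA groups and the monotonicity of $\topdim$ under the four elementary operations (open subgroup, compact quotient, and their Pontryagin duals). Once these are in hand, Theorem \ref{klm} and the projective-limit estimate do the rest.
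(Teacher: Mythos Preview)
Your proof is correct and follows the same overall architecture as the paper: the zero case via Lemma~\ref{ttdd}/Corollary~\ref{std}, the lower bound via Lemma~\ref{ds} and the manifold pieces from Theorem~\ref{klm}, and the upper bound via Theorem~\ref{klm} combined with the inverse-limit estimate Proposition~\ref{toporef}(\ref{invers}).

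The one place where your execution differs from the paper's is the upper bound. The paper proceeds in two stages separated by a duality flip: first pass from a general Lie group to compactly generated Lie subgroups by a projective limit over open subgroups, then invoke Pontryagin--Chabauty duality to transfer this to all compactly generated groups, and finally run the projective-limit argument a second time. You instead work directly with the filtering system of pairs $(\Omega,K)$ (compactly generated open, compact with Lie quotient) from Section~5, obtaining $\mathcal{S}(G)\simeq\varprojlim\mathcal{S}(\Omega/K)$ in one shot and bounding each factor using the monotonicity $\topdim(\Omega/K)\le\topdim(G)$, $\topdim((\Omega/K)^\vee)\le\topdim(G^\vee)$. This is a legitimate streamlining: it avoids the intermediate duality step at the cost of checking two monotonicity facts (which are easy from Lemma~\ref{covg}) and the existence of the cofinal family of such pairs (which follows from the standard fact that a compact abelian group is a projective limit of compact Lie groups). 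The paper's two-step route, on the other hand, only ever takes projective limits over open subgroups, which keeps each individual step closer to the statements already proved and makes the role of duality more visible.
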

\begin{proof}
For the inequality $\ge$, Lemma \ref{ds} reduces to $G=\Z^{(I)}\times(\R/\Z)^J\times\R^k$. If $I,J$ are finite, then by Theorem \ref{klm} we get a piece of $n$-manifold, where $n=(k+\# I)(k+\# J)$. If $(k+\# I)(k+\# J)\neq 0$ and either $I$ or $J$ is infinite, the same argument allows to find pieces of manifolds of arbitrary large dimension. 

Let us prove $\le$. If either $\topdim(G)$ or $\topdim(G^\vee)$ is zero, then $G$ is totally disconnected, or elliptic, and then we know by Corollary \ref{std} that $\mathcal{S}(G)$ is totally disconnected, so is zero-dimensional.

We henceforth assume $\topdim(G)\topdim(G^\vee)$ nonzero. If either $\topdim(G)$ or $\topdim(G^\vee)$ is infinite there is nothing to prove, so we suppose both finite and nonzero.

In case $G$ is a compactly generated Lie group, in view of Proposition \ref{rcd}, the result is given by Theorem \ref{klm}.

First assume that $G$ is a Lie group. Then in view of Corollary \ref{dimgvc}, $G$ has an open, compactly generated subgroup $M$ such that for every subgroup containg $N$, we have $\topdim(N^\vee)=\topdim(G^\vee)$. Now $\mathcal{S}(G)$ is the projective limit of $\mathcal{S}(N)$, when $N$ ranges over subgroups containg $M$, which is of dimension $\topdim(G)\topdim(G^\vee)$. By Proposition \ref{toporef}(\ref{invers}), it follows that $\topdim(\mathcal{S}(G))\le \topdim(G)\topdim(G^\vee)$.

Now by duality, the result holds when $G$ is a compactly generated locally compact abelian group. Repeating a second time the projective limit argument, we obtain the result for a general locally compact abelian group.
\end{proof}


\end{document}